\DeclareMathSymbol{\R}{\mathbin}{AMSb}{"52}
\DeclareMathSymbol{\N}{\mathbin}{AMSb}{"4E}
\DeclareMathSymbol{\Z}{\mathbin}{AMSb}{"5A}
\DeclareMathSymbol{\Q}{\mathbin}{AMSb}{"51}
\DeclareMathSymbol{\C}{\mathbin}{AMSb}{"43}
\newtheorem{theorem}{Theorem}[section]
\newtheorem{lemma}[theorem]{Lemma}
\newtheorem{proposition}[theorem]{Proposition}
\newtheorem{corollary}[theorem]{Corollary}
\DeclareMathOperator{\SL}{SL}
\DeclareMathOperator{\GL}{GL}
\DeclareMathOperator{\PSL}{PSL}
\DeclareMathOperator{\PGL}{PGL}
\DeclareMathOperator{\lcm}{lcm}
\DeclareMathOperator{\Tr}{Tr}
\DeclareMathOperator{\codim}{codim}
\newcommand{\F}{\mathbb{F}}
\newcommand{\Fp}{\mathbb{F}_p}
\newcommand{\Fq}{\mathbb{F}_q}
\newcommand{\OF}{\mathcal{O}_F}
\newcommand{\OK}{\mathcal{O}_K}
\newcommand{\exes}{x_1,\cdots, x_s}
\newcommand{\Mat}{\text{Mat}}
\newcommand{\Hom}{\text{Hom}}
\newcommand{\slie}{\mathfrak{sl}}
\newcommand{\ep}{\epsilon}
\newcommand{\FN}[1][\Gamma]{F_{#1}^{\unlhd}(n)}
\newcommand{\FS}[1][\Gamma]{F_{#1}^{\leq}(n)}
\renewcommand{\mod}{\ \mathrm{mod} \ }
\begin{document}

\title{Quantifying Residual Finiteness of Linear Groups}

\author{Daniel Franz}

\keywords{Residual finiteness growth, residual finiteness, linear groups}

\subjclass[2010]{20F69 (primary), 20E26, 20G15 (secondary)}

\maketitle

\begin{abstract}
 Normal residual finiteness growth measures how well a finitely generated residually finite group is approximated by its finite quotients. We show that any finitely generated linear group $\Gamma\leq \GL_d(K)$ has normal residual finiteness growth asymptotically bounded above by $(n\log n)^{d^2-1}$; notably this bound depends only on the degree of linearity of $\Gamma$. If char $K=0$ or $K$ is a purely transcendental extension of a finite field, then this bound can be improved to $n^{d^2-1}$. We also give lower bounds on the normal residual finiteness growth of $\Gamma$ in the case that $\Gamma$ is a finitely generated subgroup of a Chevalley group $G$ of rank at least 2. These lower bounds agree with the computed upper bounds, providing exact asymptotics on the normal residual finiteness growth. In particular, finite index subgroups of $G(\Z)$ and $G(\Fp[t])$ have normal residual finiteness growth $n^{\dim(G)}.$ We also compute the non-normal residual finiteness growth in the above cases; for the lower bounds the exponent $\dim(G)$ is replaced by the minimal codimension of a maximal parabolic subgroup of $G$.
\end{abstract}

\section{Introduction}

Let $\Gamma$ be a finitely generated residually finite group with finite generating set $X$. If $\gamma\in \Gamma$, let $||\gamma||_X$ be the word length of $\gamma$ with respect to $X$. If $\gamma\in \Gamma$ is nontrivial, we say a finite quotient $Q$ of $\Gamma$ detects $\gamma$ if the image of $\gamma$ in $Q$ is nontrivial. Define $F^{\unlhd}_{\Gamma,X}(n)$ to be the smallest natural number $N$ such that for all $\gamma\in \Gamma$ with $||\gamma||_X\leq n$, $\gamma$ is detected by a quotient of size at most $N$.

We call the function $F^{\unlhd}_{\Gamma,X}$ the normal residual finiteness growth function of $\Gamma$. This function was first studied by Bou-Rabee in \cite{BouQrf}, and its asymptotics have been studied for virtually nilpotent linear groups \cite{BouMcRlcm}, arithmetic groups \cite{BouKal}, linear groups \cite{BouMcR}, and free groups \cite{BouApp} \cite{KasMat}, with the best current estimate for free groups given in \cite{Thom}. A related function is $F^{\leq}_{\Gamma, X}$, the non-normal residual finiteness growth function of $\Gamma$, defined as the smallest natural number $N$ such that for all $\gamma\in \Gamma$ with $||\gamma||_X\leq n$, there exists $H\leq G$ with $\gamma\not\in H$ and $[G:H]\leq N$. This function has also been studied for certain classes of groups, in particular for virtually special groups in \cite{BouHagPat} and for free groups in \cite{BouMcRlcm} \cite{Buskin} \cite{KThom}. Our goal in this paper is to obtain better estimates of the functions $F^{\unlhd}_{\Gamma,X}$ and $F^{\leq}_{\Gamma, X}$ when $\Gamma$ is a linear group.

While these functions depend on the choice $X$ of generating set, their asymptotic growths, which we call the normal residual finiteness growth of $\Gamma$ and non-normal residual finiteness growth of $\Gamma$, respectively, are independent of the choice of generating set (\cite{BouQrf}, Lemma 1.1). We thus drop the reference to $X$ for the remainder of the introduction. We compare the asymptotic growth of functions by writing $f\preceq g$ if for some $C$, $f(n)\leq Cg(Cn)$ for all $n$.

It was shown in \cite{BouMcR} that if $\Gamma$ is a finitely generated linear group over an infinite field, then $F^{\unlhd}_\Gamma(n)\preceq n^k$ for some $k$ depending on the field and the degree of linearity. A natural question is whether the dependence on the field of coefficients is necessary. Our first result is that in fact there is a uniform bound on the residual finiteness growth of finitely generated linear groups with a fixed degree of linearity.

\begin{theorem}\label{thm:GLdRF}
  Let $\Gamma\leq \GL_d(K)$ be a finitely generated linear group with $d\geq 2$.
  \begin{enumerate}[(i)]
   \item  $\FN\preceq (n\log n)^{d^2-1}$ and $\FS\preceq (n\log n)^{d-1}.$

  \item  If char $K=0$ or $K$ is a purely transcendental extension of a finite field, then \newline $\FN\preceq n^{d^2-1}$ and $\FS \preceq n^{d-1}$.
   \end{enumerate}
\end{theorem}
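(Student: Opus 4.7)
My plan is to reduce the problem to a prime-ideal counting argument over a finitely generated ring, combined with a congruence-quotient construction. By a standard Malcev-type reduction, after conjugation $\Gamma$ lies in $\GL_d(\mathcal{O})$ for some finitely generated integral domain $\mathcal{O} \subseteq K$. Inverting finitely many elements, I may take $\mathcal{O}$ to be an $S$-integer ring of a number field when $\mathrm{char}\,K = 0$; a localization of $\Fq[t_1,\ldots,t_s]$ when $K$ is purely transcendental over $\Fq$; and a general finitely generated $\Fp$-algebra otherwise. Submultiplicativity of any $v$-adic matrix norm then implies that an element $\gamma \in \Gamma$ of word length at most $n$ has entries bounded by $C^n$, so if $a$ is any nonzero entry of $\gamma - I$, the number of maximal ideals of $\mathcal{O}$ containing $a$ is at most $O(n)$.

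The core of the argument is to find a maximal ideal $\mathfrak{m}$ avoiding $a$ with $|\mathcal{O}/\mathfrak{m}|$ as small as possible. In case (ii), a Chebyshev-type asymptotic of the form $\sum_{|\mathcal{O}/\mathfrak{m}| \leq X} \log |\mathcal{O}/\mathfrak{m}| \sim X$ — classical over $\Z$, and established for $\Fq[t_1,\ldots,t_s]$ by enumerating closed points of $\mathbb{A}^s$ with small residue fields — supplies such an $\mathfrak{m}$ with $|\mathcal{O}/\mathfrak{m}| \leq C'n$. In case (i) only the weaker estimate $\pi_{\mathcal{O}}(X) \gtrsim X/\log X$ is available, producing $\mathfrak{m}$ with $|\mathcal{O}/\mathfrak{m}| \leq C'n\log n$. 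Reduction modulo $\mathfrak{m}$ sends $\Gamma$ into $\GL_d(\mathcal{O}/\mathfrak{m})$, a group of order at most $|\mathcal{O}/\mathfrak{m}|^{d^2}$. To sharpen the exponent to $d^2 - 1$, I split on the image of $\gamma$: when this image is non-scalar I pass to $\PGL_d(\mathcal{O}/\mathfrak{m})$, of order at most $|\mathcal{O}/\mathfrak{m}|^{d^2 - 1}$; when it is a nontrivial scalar I detect it via the determinant map into $(\mathcal{O}/\mathfrak{m})^\times$. For the non-normal estimate, in place of a quotient I take the stabilizer in $\Gamma$ of a line in $\mathbb{P}^{d-1}(\mathcal{O}/\mathfrak{m})$ moved by the image of $\gamma$, of index at most $|\mathcal{O}/\mathfrak{m}|^{d-1}$; scalar elements are again handled through an appropriate proper subgroup of $(\mathcal{O}/\mathfrak{m})^\times$ pulled back along the determinant.

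I expect the prime-counting step to be the main obstacle, since the Chebyshev asymptotic required for (ii) is delicate in positive characteristic: it holds over $\Fq[t_1,\ldots,t_s]$ because maximal ideals can be enumerated as Galois orbits of $\overline{\Fq}$-points, but fails over a general finitely generated $\Fp$-algebra once residue field degrees become unbounded, which is precisely the source of the extra $\log n$ factor in (i). A secondary technical point is choosing $\mathfrak{m}$ so that $\gamma$ does not become trivial (and, for the $d^2 - 1$ and $d-1$ exponents, not scalar unless the scalar itself is detectable via the determinant) modulo $\mathfrak{m}$, so that the quotient or stabilizer construction genuinely separates $\gamma$ from the identity.
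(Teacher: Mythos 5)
Your reduction in characteristic $0$ is incorrect: a finitely generated field $K$ of characteristic $0$ can have positive transcendence degree over $\Q$ (for example $K=\Q(x)$), in which case the ring $\mathcal{O}$ generated by the matrix entries contains transcendental elements and is a finitely generated $\Z$-algebra of Krull dimension at least $2$, not an $S$-integer ring of a number field. This already breaks the counting step: the claim that the number of maximal ideals of $\mathcal{O}$ containing a nonzero $a$ is $O(n)$ fails in higher Krull dimension, since for instance $x\in \Z[x]$ lies in $(p,x)$ for every rational prime $p$. One must restrict to maximal ideals of bounded residue field size, and that count requires separately tracking the degree of the relevant polynomial (linear in $n$) and its height (exponential in $n$), which the paper does carefully.

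More fundamentally, you never impose a splitting condition. Writing $K$ as a finite extension of $F=\Q(x_1,\dots,x_s)$ (or, in characteristic $p$, of $\Fp(t,x_1,\dots,x_s)$) with primitive element $\alpha$ of minimal polynomial $f(y)$, the residue field of a maximal ideal of $\mathcal{O}$ lying over $\mathfrak{m}\subset \mathcal{O}_F$ is governed by how $f$ factors modulo $\mathfrak{m}$; to obtain a small residue field one must choose $\mathfrak{m}$ so that $f$ splits into distinct linear factors. Finding enough such $\mathfrak{m}$ is a Chebotarev condition, not merely a Chebyshev prime-counting condition, and for $s\geq 1$ in characteristic $0$ the paper must invoke Serre's higher-dimensional Chebotarev density theorem, not the classical $\Z$ asymptotic you cite. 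In positive characteristic the paper applies an effective Chebotarev theorem over $\Fp(t)$ after first specializing $x_i\mapsto g_i(t)$ with $\deg g_i=O(\log n)$; it is that specialization which inflates the degree in $t$ by a $\log n$ factor and produces the $(n\log n)$ bound of part (i), rather than the weaker prime count you describe. Your treatment of the exponent (passing to $\PGL_d$ together with the determinant for normal quotients, and a line stabilizer for the non-normal bound) does match the paper's final step, but the arithmetic core of the argument is missing.
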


One potential application of normal residual finiteness growth is in showing a group is nonlinear. For a finitely generated group $\Gamma$, one can show $F_\Gamma(n)$ is super-polynomial to conclude $\Gamma$ is nonlinear. If $\Gamma$ is infinitely generated, the uniform bound of Theorem $\ref{thm:GLdRF}$ provides another method for establishing nonlinearity. In particular, this result has potential applications in the study of profinite groups.

\begin{corollary}
  Let $G$ be a group such that for each $k\in \N$, $G$ has a finitely generated subgroup $H$ with $F^{\unlhd}_H(n)\succeq n^k$. Then $G$ is nonlinear.
\end{corollary}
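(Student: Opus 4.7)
The plan is to proceed by contrapositive. Suppose $G$ is linear, so that $G \leq \GL_d(K)$ for some field $K$ and some integer $d \geq 2$ (the case $d = 1$ is immediate since $\GL_1(K)$ is abelian and all its finitely generated subgroups have polynomial normal residual finiteness growth of uniformly bounded degree). Every finitely generated subgroup $H$ of $G$ is then itself a finitely generated linear group sitting inside $\GL_d(K)$, with the \emph{same} degree of linearity $d$; this uniformity in $d$ is exactly the point of Theorem \ref{thm:GLdRF}, which gives
\[
F^{\unlhd}_H(n) \preceq (n \log n)^{d^2 - 1}
\]
for every such $H$.

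Next, I would invoke the hypothesis on $G$ with the choice $k = d^2$, producing a finitely generated subgroup $H \leq G$ with $F^{\unlhd}_H(n) \succeq n^{d^2}$. Combined with the upper bound above, this forces $n^{d^2} \preceq (n \log n)^{d^2 - 1}$, i.e.\ there exists a constant $C$ with $n^{d^2} \leq C \cdot (Cn \log(Cn))^{d^2-1}$ for all $n$. Dividing both sides by $n^{d^2-1}$ yields $n \leq C' (\log n)^{d^2-1}$ for some constant $C'$ and all $n$, which is false for sufficiently large $n$. This contradiction shows $G$ cannot be linear.

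The argument is essentially a one-step deduction from Theorem \ref{thm:GLdRF}, and the only thing to verify carefully is that the polylogarithmic gap between $(n\log n)^{d^2-1}$ and $n^{d^2}$ is genuine under the $\preceq$ ordering; this is the only potential subtlety and it is handled by the elementary comparison above. There is no substantive obstacle — the strength of the corollary is inherited entirely from the uniform-in-$K$ upper bound, whose proof is the actual content of the paper.
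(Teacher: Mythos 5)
Your proof is correct and is essentially the argument the paper intends: the corollary is stated as an immediate consequence of Theorem \ref{thm:GLdRF}, and you have supplied precisely the contrapositive deduction with the one small calculation that needs checking, namely that $n^{d^2} \not\preceq (n\log n)^{d^2-1}$ under the $\preceq$ ordering (dividing by $n^{d^2-1}$ and noting that $n$ outgrows any power of $\log n$). Your remark on the $d=1$ case and the observation that the real content lives entirely in the uniform-in-$K$ upper bound are both accurate.
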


The proof of Theorem $\ref{thm:GLdRF}$ easily generalizes to certain algebraic groups, yielding the following more specific result. By a Chevalley group we mean a split simple group scheme defined over $\Z$, not necessarily simply connected, with irreducible root system $\Phi$. For such a group $G$, let dim$(G)$ be its dimension and $a(G)$ be the minimal codimension of a proper parabolic subgroup; these values are given in Table $\ref{tab:constants}$ and justified in Lemma $\ref{lem:ChevalleySize}$.

\begin{table}[h]
\centering
\begin{tabular}{|c|c|c|}
  \hline
  % after \\: \hline or \cline{col1-col2} \cline{col3-col4} ...
  $\Phi$ & $\dim(G)$ &  $a(G)$ \\ \hline
  $A_l, l\geq 2$ &  $l^2+2l$ & $l$ \\
  $B_l, l\geq 2$ &  $2l^2+l$ & $2l-1$ \\
  $C_l, l\geq 3$ &  $2l^2+l$ & $2l-1$ \\
  $D_l, l\geq 4$ &  $2l^2-l$ & $2l-2$ \\
  $G_2$          & $14$     & $5$  \\
  $F_4$          & $52$    & $15$  \\
  $E_6$          & $78$    & $16$  \\
  $E_7$          &  $133$  & $27$  \\
  $E_8$          & $248$   & $57$  \\
  \hline
\end{tabular}
  \caption{}
  \label{tab:constants}
\end{table}

\begin{theorem}\label{thm:AlgGroupRF}
Let $G$ be an affine algebraic group scheme defined over $\Z$, let $K$ be a field, and let $\Gamma\leq G(K)$ be finitely generated.
 \begin{enumerate}[(i)]
 \item  $\FN \preceq (n\log n)^{\dim(G)}$ and, if $G$ is a Chevalley group,  $\FS\preceq (n\log n)^{a(G)}$.

\item If char $K=0$ or $K$ is a purely transcendental extension of a finite field, then \newline $\FN \preceq n^{\dim(G)}$ and, if $G$ is a Chevalley group, $\FS \preceq n^{a(G)}$.
\end{enumerate}
\end{theorem}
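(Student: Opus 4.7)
The plan is to adapt the proof of Theorem~\ref{thm:GLdRF} with $\dim(G)$ playing the role of $d^2-1$, and (for Chevalley $G$) with $a(G)$ playing the role of $d-1$. Fix a closed embedding $G \hookrightarrow \GL_N$ defined over $\Z$, so $\Gamma \leq G(K) \leq \GL_N(K)$. A finite generating set for $\Gamma$ involves only finitely many ring elements, so there exists a finitely generated $\Z$-subalgebra $R \subseteq K$ with $\Gamma \leq G(R)$; in the two cases of part (ii) one can further arrange that $R$ is a localisation of $\OF[t_1,\ldots,t_r]$ for a number ring $\OF$, or $R = \Fq[t_1,\ldots,t_r]$. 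The specialisation-and-reduction argument used for Theorem~\ref{thm:GLdRF} then produces, for each nontrivial $\gamma \in \Gamma$ of word length at most $n$, a maximal ideal $\mathfrak{m} \subset R$ with residue field of size $O(n \log n)$ (respectively $O(n)$) such that $\gamma$ has nontrivial image in $G(R/\mathfrak{m})$.

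The new ingredient compared to Theorem~\ref{thm:GLdRF} is a finer bound on the size of the detecting quotient. Because $G$ is a closed subscheme of $\GL_N$ of dimension $\dim(G)$, Lang--Weil (or, when $G$ is Chevalley, the explicit order formula $|G(\F_q)| = q^{\dim(G)}\prod_i(1 - q^{-d_i})$ with $d_i$ the fundamental degrees of the Weyl group) furnishes a constant $C_G$, depending only on $G$, with $|G(\F_q)| \leq C_G\, q^{\dim(G)}$ for every finite field $\F_q$. Substituting $q = |R/\mathfrak{m}|$ yields $\FN \preceq (n\log n)^{\dim(G)}$ in part (i) and $\FN \preceq n^{\dim(G)}$ in part (ii).

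For the non-normal bound with $G$ Chevalley, I fix a proper parabolic subgroup $P \leq G$ with $\dim(G/P) = a(G)$. Given nontrivial $\gamma \in \Gamma$ of length $\leq n$, I pick $\mathfrak{m}$ as above and consider the $G(R/\mathfrak{m})$-action on $(G/P)(R/\mathfrak{m})$, a set of cardinality $\preceq |R/\mathfrak{m}|^{a(G)}$ by the same dimension estimate applied to $G/P$. Provided that the image $\bar\gamma \in G(R/\mathfrak{m})$ does not fix every point of the flag variety, some conjugate $gP(R/\mathfrak{m})g^{-1}$ does not contain $\bar\gamma$, and its preimage $H \leq \Gamma$ is a subgroup of index $\preceq |R/\mathfrak{m}|^{a(G)}$ with $\gamma \notin H$, delivering the stated $\FS$ bound.

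The main obstacle is the degenerate case in which $\bar\gamma$ lies in the centre of $G(R/\mathfrak{m})$ --- equivalently, fixes every point of the flag variety --- so that the parabolic-stabiliser argument collapses. This mirrors the scalar-matrix case already present in the $\GL_d$ version of Theorem~\ref{thm:GLdRF} and will be handled analogously: since the centre of a Chevalley group is a finite scheme of bounded order, the obstruction can be removed either by passing to $G/Z(G)$ and treating $\Gamma \cap Z(G)$ via a separate and much easier abelian argument, or by slightly enlarging the residue field to separate $\bar\gamma$ from the centre. One must also verify that for all sufficiently large $q$ the flag variety $G/P$ has the expected $q^{a(G)}$ leading term of $\F_q$-points; this follows from $G$ and $P$ being split over $\Z$, so that the Bruhat cell decomposition descends to $\F_q$ for every $q$.
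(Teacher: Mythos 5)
Your high-level strategy matches the paper's: specialise to a finite quotient $G(\F_q)$ with $q = O(n\log n)$ or $O(n)$, bound $|G(\F_q)| \leq C_G q^{\dim(G)}$ by a uniform estimate (the paper proves this via Noether normalisation in Lemma~\ref{lem:AlgGrpSize}; your Lang--Weil citation is an equivalent route), and for the non-normal bound use a maximal subgroup of index $\approx q^{a(G)}$. One place where you take a genuinely different and arguably cleaner path is the non-normal bound: you work directly with the flag variety $(G/P)(\F_q)$ and the Bruhat cell count, whereas the paper invokes the classification of maximal subgroups of finite groups of Lie type to identify the proper subgroup of minimal index (Lemma~\ref{lem:ChevalleySize}, citing Kleidman--Liebeck and Vasilyev). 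Your route avoids those heavy citations, though one still needs to check that $G(\F_q)$ acts transitively on $(G/P)(\F_q)$ (Lang's theorem, using that $P$ is connected) so that stabilisers have the stated index.

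The real gap is in your ``degenerate case.'' The specialisation step as you state it only guarantees that $\bar\gamma$ is nontrivial in $G(R/\mathfrak m)$, and you then try to patch the possibility that $\bar\gamma$ is central afterwards. The paper instead bakes non-centrality into the choice of polynomial to be detected: since $\gamma$ is a non-scalar matrix (the finitely many central $\gamma$ are handled trivially), one of its off-diagonal entries, or a difference of two diagonal entries, is a nonzero element of $R$, and the Chebotarev/irreducible-polynomial argument is run on \emph{that} element. The resulting reduction $\bar\gamma$ is then automatically a non-scalar matrix, hence outside $Z(G(R/\mathfrak m))$. Your proposed fix of ``slightly enlarging the residue field'' does not obviously work: if the specialisation was chosen to preserve an arbitrary nonzero entry of $\gamma$, enlarging $\F_q$ need not remove $\bar\gamma$ from the centre, and you would have to rerun the whole specialisation with a different target polynomial anyway. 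Your other fix (pass to the adjoint group $G/Z(G)$ and treat the central part separately) is workable in principle via Lemma~\ref{lem:surj}, but you would still have to re-verify the specialisation machinery for the image group, which is more work than simply choosing the right polynomial up front. You also slightly misstate the shape of $R$ in characteristic~$0$: for general $K$ of characteristic $0$ (not purely transcendental over $\Q$), $R$ is a finite extension of a localisation of $\Z[t_1,\ldots,t_r]$, and the paper needs the primitive element theorem plus the higher-dimensional Chebotarev density theorem (Lemma~\ref{lem:HighDimCheb}) to handle that extension; this is hidden inside the ``specialisation-and-reduction argument'' you treat as a black box.
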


When $G$ is a Chevalley group of rank at least $2$, we can also determine lower bounds for residual finiteness growth, which, coupled with Theorem $\ref{thm:AlgGroupRF}$, yield precise asymptotics for residual finiteness growth. In \cite{BouKal}, Bou-Rabee and Kaletha determined the lower bound for normal residual finiteness growth of arithmetic groups of $G$. We generalize this statement to non-normal residual finiteness growth and the characteristic $p$ setting.

 \begin{theorem}\label{thm:LowerBoundRF}
If $\mathcal{O}=\Z$ or $\Fp[t]$ and $\Gamma\leq G(\mathcal{O})$ has finite index, where $G$ is a Chevalley group of rank at least 2, then $F_\Gamma^{\unlhd}(n)\succeq n^{\dim(G)}$ and $F_\Gamma^{\leq}(n)\succeq n^{a(G)}$
 \end{theorem}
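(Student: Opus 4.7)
The plan is to extend the Bou--Rabee--Kaletha lower bound by combining the congruence subgroup property (CSP) for higher-rank arithmetic groups with Lubotzky--Mozes--Raghunathan-type (LMR) distortion bounds for unipotent one-parameter subgroups. Since residual finiteness growth is a commensurability invariant, I may assume $\Gamma = G(\mathcal{O})$. The key step is to produce, for each $n$, a ``highly divisible'' witness $\gamma_n = e_\alpha(N_n)$, where $e_\alpha$ is the root subgroup for a fixed root $\alpha$. Take $N_n = \prod_{p \leq n,\, p \text{ prime}} p \in \Z$ when $\mathcal{O} = \Z$, and $N_n = \prod_{\deg f \leq n,\, f \text{ monic irreducible}} f \in \Fp[t]$ otherwise. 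By the prime number theorem (respectively its polynomial analogue), $\log |N_n|_{\mathcal{O}} \asymp n$ for $\Z$ and $\asymp p^n$ for $\Fp[t]$. LMR (classical for $\Z$, and its function-field analogue for $\Fp[t]$ in rank $\geq 2$) then gives $\|\gamma_n\|_\Gamma \asymp \log |N_n|_{\mathcal{O}}$, which is the ``size'' parameter $L_n$ against which all later bounds are measured.

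Next I invoke CSP (Bass--Milnor--Serre and Matsumoto in characteristic zero; Raghunathan, Rapinchuk, and others in positive characteristic, valid by $\mathrm{rank}\,G \geq 2$) to reduce detection to congruence quotients: every finite-index $H \leq \Gamma$ contains a principal congruence subgroup $\Gamma(I)$, and strong approximation identifies $\Gamma/\Gamma(I)$ with $G(\mathcal{O}/I)$ up to a uniformly bounded-index subgroup. If $\gamma_n \notin H$, then the image of $\gamma_n$ in $G(\mathcal{O}/I)$ is nontrivial, forcing $N_n \notin I$. Since $N_n$ absorbs every prime (or monic irreducible) of ``small'' size, any such $I$ must be contained in a maximal ideal whose residue field has size $\succeq L_n$: for $\Z$ this is Bertrand's postulate (least prime above $n$ is at most $2n$), and for $\Fp[t]$ it is the standard existence of irreducibles of each degree. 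Consequently $|G(\mathcal{O}/I)| \succeq L_n^{\dim G}$.

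For the normal bound, $G(\mathcal{O}/I)$ is quasisimple (modulo its bounded center) since $G$ is a simple Chevalley group of rank $\geq 2$, and the image of $\gamma_n$ is a nontrivial non-central unipotent; hence the smallest quotient of $G(\mathcal{O}/I)$ in which this image survives is $G(\mathcal{O}/I)/Z$, of size $\asymp L_n^{\dim G}$, giving $F^\unlhd_\Gamma(n) \succeq n^{\dim G}$. For the non-normal bound, I use the classical computation of minimal permutation representations of finite simple groups of Lie type (Cooperstein, Seitz, Kantor, Liebeck--Saxl): the smallest index of a proper subgroup of $G(\mathcal{O}/I)/Z$ is $\asymp q^{a(G)}$, where $q = |\mathcal{O}/I|$, and is realized by a minimum-codimension maximal parabolic $P$. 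Since the nontrivial unipotent $e_\alpha(N_n \bmod I)$ acts nontrivially on the flag variety $G/P$, some non-fixed point exists, and its stabilizer is a conjugate of $P$ of index $\asymp q^{a(G)}$ not containing $\gamma_n$, yielding $F^\leq_\Gamma(n) \succeq n^{a(G)}$.

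The main obstacles will be two. First, the LMR-type distortion bound in positive characteristic, i.e.\ proving that $\|e_\alpha(f)\|_{G(\Fp[t])} \asymp \deg f$ for higher-rank $G$; this is where the naive commutator construction only yields a polynomial bound in $\deg f$ rather than the linear one required, so one must appeal to the function-field LMR results of Bestvina--Eskin--Wortman (or Wortman) or argue directly using the Bruhat--Tits building. Second, the non-normal analysis requires confirming that no accidentally small subgroup of $G(\mathcal{O}/I)$ misses the specific unipotent image of $\gamma_n$: one must cite or verify the minimal permutation representation theorems in the full generality needed (rank $\geq 2$, all residue characteristics), possibly excluding a bounded list of small exceptions that do not affect the asymptotics.
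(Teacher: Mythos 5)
Your high-level framework---CSP to reduce detection to congruence quotients, LMR to bound word length of a unipotent witness, simplicity of the residue group---matches the paper's strategy, but your witness $\gamma_n = e_\alpha(N_n)$ with $N_n$ the squarefree primorial is fatally flawed. The flaw concentrates in the sentence ``any such $I$ must be contained in a maximal ideal whose residue field has size $\succeq L_n$.'' That inference is false. Take $I=(4)$: since $N_n$ is squarefree, $2\,\|\,N_n$, so the image of $\gamma_n$ in $G(\Z/4)$ is $x_\alpha(2)\neq 1$, hence $\gamma_n\notin\Gamma(4)$ for every $n$, and $\Gamma(4)$ is a fixed normal subgroup of \emph{bounded} index. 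Your candidate witness is therefore already detected by a quotient of constant size, and the proposed argument yields no lower bound whatsoever. The case you overlook---deep congruence subgroups $\Gamma(\mathfrak{m}^k)$ at a fixed small prime $\mathfrak{m}$---is precisely the hard case.

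The paper's fix is twofold. First, the witness is $M_r=x_\alpha(L_r)$ with $L_r=(\lcm(1,\dots,r))^{3(\dim G+s)}$ (respectively the polynomial LCM raised to this power in characteristic $p$); the exponent $3(\dim G+s)$ on the LCM is essential, not a cosmetic choice. Second, handling the quotients $G(\mathcal{O}/\pi^k)$ requires the full graded Lie algebra machinery of Section 4: the identification $L(G_1)\cong\mathfrak{g}(\F)\otimes x\F[x]/(x^k)$, the codimension estimate of Lemma \ref{lem:Codim} and Corollary \ref{cor:FiniteIndexCodim}, perfectness (Lemma \ref{lem:PerfectImage}), and the irreducibility of the adjoint action (Lemmas \ref{lem:InvariantIdeal}, \ref{lem:InvariantSubspace}). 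Concretely: if $p^{l_0}\,\|\,\lcm(1,\dots,r)$, so $p^{l_0+1}>r$, then $M_r\in G_{m-1}\setminus G_m$ with $m-1=3(\dim G+s)l_0$; this $m$ is large enough that the graded codimension bound yields $\codim_{L(\overline\Delta)}L(H)\geq \dim(G)(l_0+1)$, hence $[\overline\Delta:H]\geq p^{\dim G\,(l_0+1)}>r^{\dim G}$. Even the unamplified LCM, let alone the squarefree primorial, does not produce this. Your minimal-permutation-representation argument for the non-normal bound is morally aligned with the paper's Lemma \ref{lem:ChevalleySize}, but it too only treats the residue-field case $I=\mathfrak{m}$; the prime-power case again needs Corollary \ref{cor:FiniteIndexCodim} and the Case 3 computations in Lemmas \ref{lem:Char0BaseCase} and \ref{lem:CharpBaseCase}. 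Your concern about function-field LMR is a fair thing to flag, though the paper simply cites Theorem A of \cite{LMR} in both characteristics.
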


Normal and non-normal residual finiteness growth can only decrease when passing to a subgroup, so Theorem $\ref{thm:LowerBoundRF}$ also gives lower bounds for all finitely generated subgroups of $G(K)$, $G$ a Chevalley group of rank at least 2 and $K$ a field. Combining this lower bound with the upper bound from Theorem $\ref{thm:AlgGroupRF}$ then gives exact asymptotics for normal and non-normal residual finiteness growth.

\begin{corollary}
  Let $G$ be a Chevalley group of rank at least 2, let $K$ be a field of characteristic 0 or a purely transcendental extension of a finite field, and let $\Gamma\leq G(K)$ be finitely generated. Put $\mathcal{O}=\Z$ if char $K=0$ and $\mathcal{O}=\Fp[t]$ if char $K=p>0$.

  If $\Gamma\cap G(\mathcal{O})\leq G(\mathcal{O})$ has finite index, then $F_\Gamma^{\unlhd}(n)\approx n^{\dim(G)}$ and $F_\Gamma^{\leq}(n)\approx n^{a(G)}$.
\end{corollary}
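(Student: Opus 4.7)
The plan is to combine the upper bound from Theorem \ref{thm:AlgGroupRF}(ii) with the lower bound from Theorem \ref{thm:LowerBoundRF}, once the latter is transferred from $\Gamma \cap G(\mathcal{O})$ up to $\Gamma$. Both pieces are essentially in hand; the only real work is checking that a lower bound on residual finiteness growth of a subgroup forces the same lower bound on the ambient group.

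For the upper bound, I would apply Theorem \ref{thm:AlgGroupRF}(ii) directly to the finitely generated linear group $\Gamma \leq G(K)$, using that $K$ has characteristic $0$ or is a purely transcendental extension of a finite field. This immediately yields $F_\Gamma^{\unlhd}(n) \preceq n^{\dim(G)}$ and $F_\Gamma^{\leq}(n) \preceq n^{a(G)}$.

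For the lower bound, set $\Lambda = \Gamma \cap G(\mathcal{O})$, which by hypothesis has finite index in $G(\mathcal{O})$. Since $G(\mathcal{O})$ is finitely generated for Chevalley groups of rank $\geq 2$ (Behr's theorem in the $\Fp[t]$ case), $\Lambda$ is finitely generated, so Theorem \ref{thm:LowerBoundRF} applies to give $F_\Lambda^{\unlhd}(n) \succeq n^{\dim(G)}$ and $F_\Lambda^{\leq}(n) \succeq n^{a(G)}$. Now I would invoke the general principle that for a finitely generated subgroup $H \leq \Gamma$ with generating sets chosen compatibly, $F_H^{\unlhd}(n) \preceq F_\Gamma^{\unlhd}(Cn)$ (and likewise for $F^{\leq}$), because any word of length $\leq n$ in $H$ has length $\leq Cn$ in $\Gamma$, and any finite quotient of $\Gamma$ detecting $\gamma \in H$ restricts (via the image of $H$) to a finite quotient of $H$ of no larger size detecting $\gamma$. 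Applied to $\Lambda \leq \Gamma$, this gives $n^{\dim(G)} \preceq F_\Lambda^{\unlhd}(n) \preceq F_\Gamma^{\unlhd}(n)$ and similarly $n^{a(G)} \preceq F_\Gamma^{\leq}(n)$.

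Combining the two directions yields $F_\Gamma^{\unlhd}(n) \approx n^{\dim(G)}$ and $F_\Gamma^{\leq}(n) \approx n^{a(G)}$. The only real subtlety is the bookkeeping in the subgroup inequality, so I would state it as a quick preliminary lemma (or quote the remark already made in the paper just before the corollary) rather than dwell on it. Everything else is citation.
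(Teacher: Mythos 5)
Your proposal is correct and follows the paper's own (implicit) proof exactly: upper bound from Theorem \ref{thm:AlgGroupRF}(ii) applied directly to $\Gamma$, lower bound from Theorem \ref{thm:LowerBoundRF} applied to $\Lambda=\Gamma\cap G(\mathcal{O})$, transported to $\Gamma$ via the monotonicity principle $F^{\unlhd}_H\preceq F^{\unlhd}_\Gamma$ for finitely generated $H\leq\Gamma$ that the paper records as a consequence of Lemma 1.1 of \cite{BouQrf}. Your remark on Behr's theorem to justify finite generation of $\Lambda$ in the $\Fp[t]$ case is a point the paper leaves implicit and is worth making explicit.
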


The main tool used to provide the uniform upper bound in characteristic 0 is a higher dimensional version of the Chebotarev density theorem formulated by Serre \cite{Serre}. In positive characteristic the needed analogue is not available, so we use an effective form of the usual Chebotarev density theorem \cite{PCheb}. Specifically, we need a statement about natural density, not Dirichlet density. Over certain fields, this causes the bounds to be powers of $n\log n$ instead of $n$. For the lower bounds we use properties of Chevalley groups and associated graded Lie algebras, as well as the congruence subgroup property.

The paper is organized as follows. After collecting some lemmas in Section 2, we prove Theorem $\ref{thm:AlgGroupRF}$ in Section 3. This is done in stages, beginning with the case of a purely transcendental extension of a finite field.  We then collect results on the Chebotarev Density Theorem which are used to prove Theorem $\ref{thm:GLdRF}$ and the remaining parts of Theorem $\ref{thm:AlgGroupRF}$ together.

In Section 4, we consider graded Lie algebras arising from Chevalley groups and relate them to the problem of finding lower bounds for normal and non-normal residual finiteness growth. The characteristic 0 part of Theorem $\ref{thm:LowerBoundRF}$ is then proved in Section 5, and in Section 6 the proof is completed in the positive characteristic setting.

\vspace{1cm}

\noindent \textit{Acknowledgments.} I would like to thank my advisor Mikhail Ershov for his great advice and support while working on this topic. I would also like to thank Martin Kassabov for discussions which greatly simplified some of the proofs of lower bounds. My thanks also to Andrei Rapinchuk for suggesting a reference that was key in proving the upper bound statement and to Khalid Bou-Rabee for providing helpful comments on an early draft of this paper.

\section{Preliminaries}

Let $\Gamma$ be a finitely generated group, generated by a finite symmetric set $X$. If $\gamma\in \Gamma$ is nontrivial, define
\begin{align*}
  D^{\unlhd}_\Gamma(\gamma)&= \min\{[\Gamma: N] : N\unlhd \Gamma, \gamma\not\in N\}, \\
  D^{\leq}_\Gamma(\gamma) &= \min\{[\Gamma: H] : H\leq \Gamma, \gamma\not\in H\}.
\end{align*}

Then the normal and non-normal residual finiteness growth of $\Gamma$ are determined by the functions
\begin{align*}
F_{\Gamma, X}^{\unlhd}(n)&=\max\{D^{\unlhd}_\Gamma(\gamma) : ||\gamma||_X\leq n, \gamma\neq 1\}, \\
F_{\Gamma, X}^{\leq}(n)&=\max\{D^{\leq}_\Gamma(\gamma) : ||\gamma||_X\leq n, \gamma\neq 1\}.
\end{align*}

We will measure asymptotic growth by writing $f\preceq g$ if there exists $C$ such that $f(n)\leq Cg(Cn)$ for all $n$. If $f\preceq g$ and $g\preceq f$ we will write $f\approx g$.

The asymptotic growths of $F_{\Gamma, X}^{\unlhd}(n)$ and $F_{\Gamma, X}^{\leq}(n)$ are independent of the generating set (Lemma 1.1, \cite{BouQrf}), so the reference to $X$ will be dropped. Another consequence of Lemma 1.1 in \cite{BouQrf} that will be used tacitly for the remainder of the paper is that $\FN[H]\preceq \FN$ and $\FS[H]\preceq \FS$ if $H\leq \Gamma$.

We will need the following result when proving lower bounds; it is contained in Lemma 2.4 in \cite{BouKal}. In particular it will allow us to pass from a Chevalley group to its simply connected cover. We include the proof for completeness.

\begin{lemma}\label{lem:surj}
  Assume $\Gamma$ and $\Delta$ are finitely generated, residually finite groups. If $f:\Gamma\to \Delta$ is a homomorphism with finite kernel, then $\FN\preceq \FN[\Delta]$ and $\FS\preceq \FS[\Delta]$.
\end{lemma}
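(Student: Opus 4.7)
The plan is to split nontrivial $\gamma\in\Gamma$ into two cases according to whether they lie in $K:=\ker f$, and to bound $D_\Gamma^{\unlhd}(\gamma)$ and $D_\Gamma^{\leq}(\gamma)$ separately in each case. First I would fix a finite symmetric generating set $X$ of $\Gamma$ and enlarge $f(X)$ to a finite symmetric generating set $Y$ of $\Delta$; this enlargement is only necessary because $f$ need not be surjective. Applying $f$ to a minimal-length $X$-word representing $\gamma$ then yields the key comparison $\|f(\gamma)\|_Y\leq \|\gamma\|_X$.

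For $\gamma\notin K$, the element $f(\gamma)$ is nontrivial in $\Delta$, so I would simply pull back a detecting subgroup: if $N\unlhd\Delta$ satisfies $f(\gamma)\notin N$, then $f^{-1}(N)\unlhd\Gamma$ satisfies $\gamma\notin f^{-1}(N)$, and $[\Gamma:f^{-1}(N)]\leq[\Delta:N]$ because $\Gamma/f^{-1}(N)$ embeds in $\Delta/N$. Choosing $N$ to realize $D_\Delta^{\unlhd}(f(\gamma))$ and using the word-length estimate gives $D_\Gamma^{\unlhd}(\gamma)\leq F_\Delta^{\unlhd}(\|\gamma\|_X)$. The identical argument with arbitrary subgroups in place of normal subgroups handles the non-normal case.

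For $\gamma\in K\setminus\{1\}$, I would exploit that $K$ is finite together with residual finiteness of $\Gamma$: for each of the finitely many nontrivial elements of $K$, choose a finite-index normal subgroup (respectively subgroup) of $\Gamma$ missing it, and let $M$ be the maximum of these indices. Then $D_\Gamma^{\unlhd}(\gamma),\,D_\Gamma^{\leq}(\gamma)\leq M$ uniformly over $K\setminus\{1\}$. Combining the two cases, $F_\Gamma^{\unlhd}(n)\leq\max(M,F_\Delta^{\unlhd}(n))\leq M\cdot F_\Delta^{\unlhd}(n)$, which is the desired $\FN\preceq\FN[\Delta]$, and identically for $\FS\preceq\FS[\Delta]$.

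There is no real obstacle here. The only delicate point is that $f$ need not be surjective, forcing one to enlarge $Y$ to contain $f(X)$ so that the word-length bound holds; beyond that, the lemma reduces to a direct pullback of detecting subgroups, together with a trivial uniform bound on the finite kernel supplied by residual finiteness of $\Gamma$.
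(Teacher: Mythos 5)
Your proof is correct and takes essentially the same approach as the paper: pull a detecting (normal) subgroup of $\Delta$ back along $f$, use the word-length comparison $\|f(\gamma)\|\leq\|\gamma\|$, and dispose of the finitely many nontrivial kernel elements with a uniform constant supplied by residual finiteness. The only cosmetic difference is how you deal with non-surjectivity: you enlarge the generating set of $\Delta$ to contain $f(X)$, whereas the paper first replaces $\Delta$ by the image $f(\Gamma)$ (using that $\FN[f(\Gamma)]\preceq\FN[\Delta]$ since $f(\Gamma)\leq\Delta$) so that $f$ becomes surjective and $f(X)$ generates. Your explicit constant $M$ for the kernel case is actually a bit cleaner than the paper's phrasing, which restricts to word length exactly $n$ large and leaves the kernel contribution implicit in the $\preceq$.
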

\begin{proof}
 Since $f(\Gamma)\leq \Delta$, $\FN[f(\Gamma)] \preceq \FN[\Delta]$. Hence it suffices to show $\FN\preceq \FN[f(\Gamma)]$.

  Assume $\Gamma=\langle X\rangle$, $|X|\leq \infty$. Then $f(\Gamma)$ is generated by $f(X)=\{f(x): x\in X\}$. Since the kernel of $f$ is finite, if $n$ is sufficiently large then $f(\gamma)\neq 1$ for all $\gamma\in \Gamma$ with $||\gamma||_X=n$. Let $n$ be large enough to ensure this and let $\gamma\in \Gamma$ with $||\gamma||_X=n$. We have $||f(\gamma)||_{f(X)}\leq n$ and $f(\gamma)\neq 1$, so there exists a normal subgroup $N\unlhd f(\Gamma)$ such that $f(\gamma)\not\in N$ and $[f(\Gamma):N]\leq F^{\unlhd}_{f(\Gamma), f(X)}(n)$. Hence $N'=N\ker(f)\unlhd \Gamma$ satisfies
  \begin{equation*}
   \gamma\not\in N' \text{ and } [\Gamma:N']\leq F^{\unlhd}_{f(\Gamma), f(X)}(n),
  \end{equation*}
  so $F^{\unlhd}_{\Gamma, X}(n) \leq F^{\unlhd}_{f(\Gamma), f(X)}(n)$ and thus $\FN\preceq \FN[f(\Gamma)]$.

  The same argument with $N$ replaced by an arbitrary subgroup $H$ shows that $\FS\preceq \FS[\Delta]$.
\end{proof}

If $f\in \Fp[t][\exes]$, we treat $f$ as a polynomial with coefficients in $\Fp[t]$ and consider the degree of $f$ to be the total degree of the $x_i's$. Define the height of $f$ to be ht$(f)=\max\{\deg g(t): g(t) \mbox{ is a coefficient of } f\}.$

\begin{lemma}\label{lem:FptHom} Let $f\in \Fp[t][\exes]$ be nonzero with $\deg f\leq 2^m$. Then there exist $g_1(t),\cdots, g_s(t)\in \Fp[t]$ with $\deg g_i(t)\leq m$ for each $i$ such that $f(g_1(t), \cdots, g_s(t))\neq 0$.
\end{lemma}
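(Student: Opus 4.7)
The plan is to induct on the number of variables $s$, using a Schwartz--Zippel style argument over the fraction field $\Fp(t)$. Let $T=\{g(t)\in \Fp[t] : \deg g\leq m\}$; then $|T|=p^{m+1}$, and since $p\geq 2$ one has $|T|\geq 2^{m+1}>2^m\geq \deg f$. This gives more candidate substitutions than $f$ can have roots in a single variable, which is the numerical fact driving the whole proof.

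For the base case $s=1$, view $f$ as a nonzero polynomial in $x_1$ of degree at most $2^m$ over the field $\Fp(t)$. It has at most $2^m$ roots in $\Fp(t)$, so since $|T|>2^m$, some $g_1\in T$ satisfies $f(g_1(t))\neq 0$ in $\Fp[t]$.

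For the inductive step, write $f=\sum_i f_i(x_1,\ldots,x_{s-1})\,x_s^i$ with $f_i\in \Fp[t][x_1,\ldots,x_{s-1}]$, and let $k$ be the largest index with $f_k\neq 0$. The coefficient $f_k$ is nonzero with total degree at most $(\deg f)-k\leq 2^m$, so the induction hypothesis applied in $s-1$ variables produces $g_1,\ldots,g_{s-1}\in T$ with $f_k(g_1,\ldots,g_{s-1})\neq 0$ in $\Fp[t]$. Then $f(g_1,\ldots,g_{s-1},x_s)$ is a nonzero polynomial in $x_s$ over $\Fp(t)$ of degree $k\leq 2^m$, so the base case supplies $g_s\in T$ with $f(g_1,\ldots,g_s)\neq 0$.

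There is no real obstacle here; the whole argument reduces to a counting inequality, and the only thing to verify is that the total-degree bound is preserved when passing to the leading coefficient in $x_s$, which is immediate. The slightly unusual looking bound $2^m$ in the hypothesis is exactly what is needed for the base case to succeed uniformly in $p$: one needs $p^{m+1}=|T|>\deg f$ for every prime $p\geq 2$, and $p^{m+1}\geq 2^{m+1}>2^m$ delivers precisely this.
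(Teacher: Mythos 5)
Your proof is correct and essentially the same as the paper's: induction on the number of variables, with the base case given by the counting inequality $|T|=p^{m+1}>2^m\geq\deg f$ in the domain $\Fp[t]$. The only cosmetic difference is the order in which variables are processed — you fix $x_1,\ldots,x_{s-1}$ first via the inductive hypothesis applied to the leading coefficient in $x_s$ and then specialize $x_s$, while the paper specializes $x_s$ first and then invokes the inductive hypothesis on the remaining $s-1$ variables; both are valid instances of the same Schwartz–Zippel-style argument.
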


\begin{proof}
We induct on $s$. Suppose $s=1$. Since $\Fp[t]$ is an integral domain and $\deg f\leq 2^m$, $f(x)$ has at most $2^m$ roots. There are at least $2^{m+1}$ elements of $\Fp[t]$ with degree at most $m$, so $f(g(t))\neq 0$ for some $g(t)$ with $\deg g(t)\leq m$.

Now assume the lemma is true for $s=n-1$ and suppose $s=n$. When considered as a polynomial over $x_s$ with coefficients in $\Fp[t][x_1,\cdots, x_{s-1}]$, $f$ has at most $2^m$ roots, so there is some $g_s(t)\in \Fp[t]$ with $\deg g(t)\leq m$ such that
$$f(x_1, \cdots, x_{s-1}, g(t))\neq 0.$$
Applying the inductive hypothesis finishes the proof.
\end{proof}

We will also need the following size estimates.

\begin{lemma}\label{lem:AlgGrpSize}
Let $G$ be an affine algebraic group scheme defined over $\Z$, $q$ a prime power. There exists a constant $C$ independent of $q$ such that $|G(\Fq)|\leq C q^{\dim(G)}$.
\end{lemma}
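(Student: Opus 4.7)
The plan is to embed $G$ as a closed subscheme of an affine space over $\Z$ and bound the number of $\Fq$-points via Noether normalization. Let $R = \Z[G]$ denote the coordinate ring of $G$; this is a finitely generated $\Z$-algebra, and $G(\Fq) = \Hom_{\Z\text{-alg}}(R, \Fq)$. Set $d = \dim(G)$.

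The next step applies Noether normalization over $\Q$: there exist elements $y_1, \ldots, y_d \in R \otimes_\Z \Q$ such that $R \otimes_\Z \Q$ is module-finite over $\Q[y_1, \ldots, y_d]$. After clearing denominators, we may take $y_i \in R$, and there exist a nonzero integer $N_0$ and a constant $M$ such that $R[1/N_0]$ is generated by at most $M$ elements as a module over $\Z[1/N_0][y_1, \ldots, y_d]$. For any prime power $q = p^k$ with $p \nmid N_0$, reducing modulo $p$ shows that $R \otimes_\Z \Fq$ is generated by at most $M$ elements as a module over $\Fq[y_1, \ldots, y_d]$. Every element of $G(\Fq)$ restricts to a point of $\Fq^d$, and the fiber over each such point is a finite-dimensional $\Fq$-algebra of dimension at most $M$, which admits at most $M$ maps to $\Fq$; this yields $|G(\Fq)| \leq M q^d$.

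For the finitely many primes $p$ dividing $N_0$, handle them separately by repeating the Noether normalization argument directly over $\F_p$, obtaining prime-specific constants $M_p$ such that $|G(\F_{p^k})| \leq M_p p^{k \dim(G_{\F_p})}$ for all $k \geq 1$. Setting $C = \max(M, M_{p_1}, \ldots, M_{p_r})$ then yields the claim.

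The main subtlety lies in ensuring $\dim(G_{\F_p}) \leq d$ at the bad primes so that the per-prime bound carries the correct exponent $q^d$. For the affine algebraic group schemes relevant to this paper---particularly $\GL_d$ and Chevalley groups, which are smooth and hence flat over $\Z$---fiber dimensions are constant over $\Spec(\Z)$ by standard results, so $\dim(G_{\F_p}) = d$ at every prime and the argument concludes cleanly. In greater generality one may instead invoke a Lang--Weil type bound, which gives $|V(\Fq)| \leq C' q^{\dim V}$ for any affine $\Fq$-variety $V$ defined by polynomials of bounded degree, uniformly in $q$.
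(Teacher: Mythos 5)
Your proposal follows the same core strategy as the paper's proof: Noether normalization to realize the coordinate ring as a finite module over a polynomial ring in $\dim(G)$ variables, then counting homomorphisms to $\Fq$ by choosing the images of the "transcendence basis" arbitrarily ($q^{\dim(G)}$ choices) and bounding the remaining choices by the degrees of the monic integral relations. Where you diverge is in the treatment of uniformity in $q$, and here your version is actually more careful than the paper's. The paper writes ``the Hopf algebra representing $G$, finitely generated over a field $k$'' and applies Noether normalization over $k$; since $k$ must be a finite field (else $\Hom_k(A,\Fq)$ is empty), the constants $c$, $m$ produced this way depend \emph{a priori} on the residue characteristic, and the paper leaves implicit why a single constant $C$ works for all $q$. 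You resolve this explicitly: normalize over $\Q$, clear denominators to obtain a uniform constant away from finitely many primes, and bound the bad primes separately. You also correctly flag the remaining subtlety that at bad primes the fiber dimension could in principle jump, which both proofs implicitly exclude; for the smooth flat group schemes actually used in the paper ($\GL_d$, Chevalley groups) this is automatic, and your remark to that effect, together with the Lang--Weil fallback, addresses the point as well as one can without strengthening the hypotheses of the lemma. In short: same method, with your write-up filling in a uniformity step that the paper elides.
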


\begin{proof}
  Let $A$ be the Hopf algebra representing $G$, finitely generated over a field $k$, so that $G(\Fq)=\Hom_k(A,\Fq)$. By Noether normalization, $A$ is a finitely generated module over a polynomial ring $k[x_1,\cdots, x_d]$, where $d=\dim (G)$. If $A$ is generated as a module by $y_1,\cdots, y_m$, then each $y_i$ is integral over $k[x_1,\cdots, x_d]$, so for each $1\leq i\leq m$ we can find a polynomial
  $$f_i(x_1,\cdots, x_d,Y)\in k[x_1,\cdots, x_d][Y]$$
  such that $f_i(x_1,\cdots, x_d,y_i)=0$. Let
  $\displaystyle c=\max_{1\leq i\leq m} \deg f_i.$
  An element $\varphi\in \Hom_k(A,\Fq)$ is determined by the images of the $x_i$ and $y_j$. Given choices of $\varphi(x_i)$, which can be made arbitrarily, for each $1\leq j\leq m$ there are at most $c$ choices of $\varphi(y_j)$ that will satisfy $f_j(\varphi(x_1),\cdots, \varphi(x_d),\varphi(y_j))=0$. Thus $|\Hom_k(A,\Fq)|\leq c^mq^d$, so  $C=c^m$ is the required constant.
\end{proof}

\begin{lemma}\label{lem:ChevalleySize}
  Let $G$ be a Chevalley group with an embedding into $\SL_d$, $q$ be a prime power, and $H\leq G(\Fq)$ be a proper subgroup of minimal index. Then $|G(\Fq)/Z(G(\Fq))|\geq \frac{1}{2d} q^{\dim(G)}$ and $\frac{1}{2} q^{a(G)}\leq [G(\Fq): H]\leq 2 q^{a(G)}.$
\end{lemma}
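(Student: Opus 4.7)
The plan is to handle the two assertions in turn, both resting on standard structural facts about split Chevalley groups.

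For the first assertion, I would bound the numerator and denominator separately. The embedding $G\hookrightarrow\SL_d$ sends $Z(G(\Fq))$ into $Z(\SL_d(\Fq))$, the group of scalar matrices whose diagonal entries are $d$-th roots of unity, so $|Z(G(\Fq))|\leq d$. For the order of $G(\Fq)$ itself, I would invoke the Chevalley order formula
\[
|G(\Fq)| \;=\; q^{|\Phi^+|}\prod_{i=1}^{l}(q^{d_i}-1) \;=\; q^{\dim G}\prod_{i=1}^{l}(1-q^{-d_i}),
\]
where $l=\operatorname{rank}(G)$ and $2\leq d_1<\cdots<d_l$ are the degrees of the Weyl group; this holds for any isogeny type (both sides agree on applying Lang's theorem to the central isogeny $G_{sc}\to G$). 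Since the $d_i$ are distinct integers with $d_i\geq 2$, one has $\sum_i q^{-d_i}\leq q^{-2}/(1-q^{-1})\leq 1/2$ for $q\geq 2$, so $\prod(1-q^{-d_i})\geq 1/2$, and thus $|G(\Fq)|\geq q^{\dim G}/2$. Dividing by $|Z(G(\Fq))|\leq d$ gives the first inequality.

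For the second assertion, I would first exhibit an explicit subgroup of small index. Let $P\leq G$ be a maximal parabolic with $\dim(G/P)=a(G)$; Table \ref{tab:constants} simply records this minimum codimension type-by-type. By the Bruhat decomposition,
\[
[G(\Fq):P(\Fq)] \;=\; |(G/P)(\Fq)| \;=\; \sum_{w\in W^P}q^{\ell(w)},
\]
a polynomial in $q$ of degree $a(G)$ with leading coefficient $1$. For each root-system type one can express this as a product of $q$-binomial factors $(q^n-1)/(q-1)$, and a routine check yields $|(G/P)(\Fq)|\leq 2q^{a(G)}$ for $q\geq 2$. Since $H$ has minimum index, $[G(\Fq):H]\leq[G(\Fq):P(\Fq)]\leq 2q^{a(G)}$.

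The matching lower bound $[G(\Fq):H]\geq \tfrac{1}{2}q^{a(G)}$ is where the real content sits, and I expect it to be the main obstacle. It is not a Bruhat computation but rather requires the classification of minimum-index subgroups of finite Chevalley groups: the theorems of Cooperstein for the classical types and of Liebeck--Seitz (and Vasilyev) for the exceptional types show that, once the rank is at least two and $q$ is large enough to avoid the small exceptional cases, the minimum index of a proper subgroup of $G(\Fq)$ is realized by a maximal parabolic of minimal codimension $a(G)$. Granted that result, $[G(\Fq):H]\geq |(G/P)(\Fq)|\geq q^{a(G)}\geq \tfrac{1}{2}q^{a(G)}$. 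Everything else in the lemma is bookkeeping with the order formula and the Bruhat cell counts; it is the appeal to this subgroup-structure classification that is the essential ingredient.
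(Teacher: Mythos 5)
Your proof is essentially the same in structure as the paper's, which is a two-sentence citation: Steinberg's order theorem (Theorem~25, \S 9 of \cite{St}) for the first bound, and the classification of maximal subgroups of minimal index (\cite{KL}, Theorem~5.2.2, for classical types; \cite{VGF}, \cite{VE} for exceptional types) for the second. You correctly identify that the real content is the subgroup-structure classification, and your order-formula computation is a valid way to unpack the Steinberg reference.

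Two small slips worth noting. First, your claim that the embedding sends $Z(G(\F_q))$ into the scalars $Z(\SL_d(\F_q))$ is not automatic for an arbitrary faithful embedding (it requires, e.g., that the representation be irreducible, so Schur applies); the conclusion $|Z(G(\F_q))|\le d$ is nonetheless correct, since for every simple type the order of the center of $G_{sc}$ is at most the minimal dimension of a faithful representation, so any stated justification should go that route. Second, the Weyl group degrees $d_i$ are \emph{not} always distinct --- for $D_l$ with $l$ even the degree $l$ occurs twice --- so the clean bound $\sum_i q^{-d_i}\le q^{-2}/(1-q^{-1})$ needs a slightly different justification; the product $\prod_i(1-q^{-d_i})\ge 1/2$ still holds for $q\ge 2$ but is checked by a slightly more careful estimate (or type-by-type). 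Finally, the claim $|(G/P)(\F_q)|\le 2q^{a(G)}$ is true but is not quite a one-line consequence of the Bruhat decomposition; the cell counts can be large, and making this precise amounts to the same case analysis the paper delegates to \cite{KL}, \cite{VGF}, \cite{VE}. None of these affect the validity of the approach.
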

\begin{proof}
  The size bound of $|G(\Fq)/Z(\Fq)|$ follows from Theorem 25, $\S$9, in \cite{St}. The index of the largest maximal subgroup of $G(\Fq)$ can be found in \cite{KL} (Theorem 5.2.2) for the classical groups, and in \cite{VGF}, \cite{VE} for the exceptional groups.
\end{proof}

\section{Upper Bounds}

\subsection{Purely transcendental extensions of finite fields}

We begin this section by proving Theorems $\ref{thm:GLdRF}$ and $\ref{thm:AlgGroupRF}$ in the case that $K$ is a purely transcendental extension of a finite field.

\begin{lemma}\label{lem:FqtHom} Let $f(t)\in \Fq[t]$ be nonzero with degree at most $n$. Then there exists a finite field $\F$ with $2n<|\F|\leq 2nq$ and a homomorphism $\phi:\Fq[t]\to \F$ such that $\phi(f(t))\neq 0$.
\end{lemma}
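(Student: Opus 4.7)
The plan is to exploit the fact that a ring homomorphism $\phi : \F_q[t] \to \F$ into a finite extension $\F$ of $\F_q$ is completely determined by $\alpha := \phi(t) \in \F$, and that $\phi(f) = f(\alpha)$. So the problem reduces to finding a finite extension $\F/\F_q$ of the prescribed size which contains some element that is not a root of $f$.

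First I would pick the exponent $k$ to be the smallest positive integer with $q^k > 2n$. By minimality, $q^{k-1} \leq 2n$, and multiplying by $q$ gives $q^k \leq 2nq$. Taking $\F = \F_{q^k}$ then immediately yields the size constraint $2n < |\F| \leq 2nq$ required by the statement.

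Next, since $f$ is a nonzero polynomial of degree at most $n$, it has at most $n$ roots in the field $\F$. Because $|\F| > 2n \geq n+1$, the set $\F \setminus \{\text{roots of } f\}$ is nonempty, so we can choose $\alpha \in \F$ with $f(\alpha) \neq 0$. Define $\phi$ as the unique $\F_q$-algebra homomorphism sending $t \mapsto \alpha$; then $\phi(f) = f(\alpha) \neq 0$, finishing the argument.

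There is no real obstacle here: the only delicate point is coordinating the two bounds on $|\F|$, and choosing $k$ to be minimal with $q^k > 2n$ does this in one step. (The reason the factor of $q$ on the right cannot be removed is that when $q$ itself already exceeds $2n$ one is forced to take $k=1$ and $|\F|=q$, which can be as large as $2nq$.)
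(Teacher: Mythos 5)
Your proof is correct, and it takes a genuinely different and more elementary route than the paper's. The paper counts monic irreducible polynomials of degree $M$ in $\Fq[t]$ via Gauss's formula $I_q(M)=\frac{1}{M}\sum_{d\mid M}\mu(d)q^{M/d}$, shows $M I_q(M)\geq \frac{1}{2}q^M$, and deduces that some irreducible $h(t)$ of degree $M$ fails to divide $f$ because otherwise $\deg f$ would exceed $n$; the field is then $\Fq[t]/(h(t))$. You instead fix $\F=\F_{q^k}$ directly (with $k$ minimal so that $q^k>2n$) and observe that $f$, having degree at most $n$, has fewer than $|\F|$ roots, so evaluation at a non-root gives the homomorphism. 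The two constructions are dual to each other --- the paper's quotient $\Fq[t]/(h(t))$ is your $\Fq[\alpha]$ with $\alpha$ a root of $h$, and your evaluation map has kernel generated by the minimal polynomial of $\alpha$ --- but your counting argument is strictly simpler: it replaces the estimate on $I_q(M)$ with the bare fact that a nonzero polynomial of degree $\leq n$ has at most $n$ roots in any extension field. Both arguments determine the same field size (the least power of $q$ exceeding $2n$), hence yield the same upper bound $2nq$, and your closing remark correctly explains why the factor of $q$ is unavoidable when $q>2n$ already.
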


\begin{proof}
It is a well known generalization of a result of Gauss \cite{Roman} that the number of irreducible polynomials in $\Fq[t]$ of degree $k$ is

$$I_q(k)=\frac{1}{k}\sum_{d|k} \mu(d)q^{k/d},$$
where $\mu$ is the Mobius inversion function. It is easy to check that $kI_q(k)\geq \frac{1}{2}q^k$ for $k\geq 2$.

Given $f(t)\in\Fq[t]$, we wish to find an irreducible polynomial of appropriate degree that does not divide $f(t)$. To that end, note that if $f(t)$ is divisible by all irreducible polynomials of degree $k$, then
\[\deg f(t)\geq kI_q(k)\geq \frac{1}{2}q^k.\]

So now let $f(t)\in \Fq[t]$ have degree at most $n$. Find $M\in \N$ with $\frac{1}{2}q^{M-1}\leq n< \frac{1}{2}q^M$. Then by the above observation and since $\deg f\leq n$, there is some irreducible polynomial $h(t)$ with degree $M$ such that $h(t)$ does not divide $f(t)$. From the choice of $M$ we have
\[2n< q^M\leq 2nq,\]
 so $f(t)$ is not zero in the field $\F=\Fq[t]/(h(t))$, which satisfies $2n<|\F|\leq 2nq$.
\end{proof}

\begin{proposition}\label{prop:AlgGroupFpTransRF}
  Let $G$ be an affine algebraic group scheme defined over $\Z$ and let $K$ be a purely transcendental extension of $\Fq(t)$ for some prime power $q$. If $\Gamma\leq G(K)$ is finitely generated, then $F_{\Gamma}^{\unlhd}(n)\preceq n^{\dim(G)}$ and, if $G$ is a Chevalley group, $F^{\leq}_{\Gamma}(n)\preceq n^{a(G)}$. If $G=\GL_d$, then $F_\Gamma^{\unlhd}(n)\preceq n^{d^2-1}$ and $F_\Gamma^{\leq}(n)\preceq n^{d-1}$.
\end{proposition}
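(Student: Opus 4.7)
The plan is to reduce $\Gamma$ to a linear group over a small finite field by evaluating matrix entries at an appropriate point, then bound the size of the image using Lemmas \ref{lem:AlgGrpSize} and \ref{lem:ChevalleySize}. Writing $K = \Fq(t_1,\ldots,t_s)$, since $\Gamma$ is finitely generated one can find $h \in \Fq[t_1,\ldots,t_s] \setminus \{0\}$ with $\Gamma \leq G(R)$ for $R = \Fq[t_1,\ldots,t_s, 1/h]$. Relative to an embedding $G \hookrightarrow \GL_m$ over $\Z$, each entry of any $\gamma \in \Gamma$ with $||\gamma||_X \leq n$ can be written over the common denominator $h^n$, with numerator in $\Fq[t_1,\ldots,t_s]$ of total degree $O(n)$.

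For the general bound $\FN \preceq n^{\dim G}$, given nontrivial $\gamma$, take $F$ to be the numerator of any nonzero entry of $\gamma - I$; this is a nonzero polynomial of degree $O(n)$. The Schwartz--Zippel lemma shows that $Fh$ has fewer than $q^{rs}$ zeros on $\F_{q^r}^s$ as soon as $q^r > \deg(Fh)$, so for $r$ minimal satisfying this (giving $q^r = O(n)$) there exists $\alpha \in \F_{q^r}^s$ with $(Fh)(\alpha) \neq 0$. The induced homomorphism $\phi: R \to \F_{q^r}$, $t_i \mapsto \alpha_i$, yields $\Gamma \to G(\F_{q^r})$ with $\phi(\gamma) \neq 1$, whose kernel is a normal subgroup of $\Gamma$ of index at most $|G(\F_{q^r})| = O(n^{\dim G})$ by Lemma \ref{lem:AlgGrpSize}.

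For the $\GL_d$ refinement to $n^{d^2-1}$, split into cases. If $\gamma$ is non-scalar, choose $F$ to witness non-scalarity (a nonzero off-diagonal entry or a nonzero difference $\gamma_{ii}-\gamma_{jj}$); composing with the projection to $\PGL_d(\F_{q^r})$ yields a quotient of size $O(n^{d^2-1})$. If $\gamma = \lambda I$ with $\lambda^d \neq 1$, the determinant $\det:\Gamma \to \F_{q^r}^\times$ detects $\gamma$ in a quotient of size $O(n)$. If $\gamma = \lambda I$ with $\lambda^d = 1$ but $\lambda \neq 1$, then $\lambda \in \Fq^\times$ because $\Fq$ is algebraically closed in the purely transcendental extension $K$; taking $r$ a fixed constant large enough that $h$ does not vanish identically on $\F_{q^r}^s$ gives a bounded-size quotient in which $\phi(\gamma) = \lambda I \neq I$.

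For the non-normal bounds $\FS \preceq n^{a(G)}$ (Chevalley case) and $\FS \preceq n^{d-1}$ ($\GL_d$ case), reuse the same reduction $\phi$. Lemma \ref{lem:ChevalleySize} produces a proper subgroup $H_0 \leq G(\F_{q^r})$ of minimal index with $[G(\F_{q^r}) : H_0] \leq 2q^{r a(G)} = O(n^{a(G)})$, and the action of $G(\F_{q^r})$ on $G(\F_{q^r})/H_0$ is almost faithful (the kernel is contained in the finite center). So whenever $\phi(\gamma)$ is non-central, some conjugate $gH_0g^{-1}$ fails to contain it, and pulling back through $\phi$ gives a subgroup of $\Gamma$ of index $O(n^{a(G)})$ avoiding $\gamma$. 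Central $\gamma$ are again caught by the bounded-size scalar reductions. The main technical subtlety is the efficient handling of scalar/central elements; the purely-transcendental hypothesis is essential because it forces such elements into $\Fq$, where they can be detected by reductions with $r$ bounded independently of $n$.
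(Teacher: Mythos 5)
Your proof is correct and follows the same overall strategy as the paper's: reduce $\Gamma$ modulo a well-chosen point of a finite field of size $O(n)$ at which a relevant polynomial stays nonzero, then apply Lemmas \ref{lem:AlgGrpSize} and \ref{lem:ChevalleySize} to bound the quotient size. The difference is a technical shortcut: you locate the evaluation point by a single application of Schwartz--Zippel to $Fh$ over $\F_{q^r}^s$ with $q^r = O(n)$, whereas the paper proceeds in two stages, first invoking Lemma \ref{lem:FqtHom} to reduce the distinguished variable $t$ modulo a suitable irreducible polynomial so that a coefficient of the relevant polynomial survives, and then evaluating the remaining $x_i$'s in the resulting field. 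Both land in a field of size $O(n)$, so your one-step version is a clean equivalent. You are also more explicit than the paper about scalar matrices $\lambda I$ with $\lambda^d=1$: the paper simply ``ignores the finitely many instances,'' while your observation that $\lambda$ must lie in $\Fq^\times$ because $\Fq$ is algebraically closed in the purely transcendental extension $K$, together with a reduction at a fixed $r$ independent of $n$, supplies the justification the paper leaves implicit.

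The one place your sketch is incomplete is the non-normal bound for $\GL_d$. Lemma \ref{lem:ChevalleySize} applies to Chevalley groups, which $\GL_d$ is not, so $\FS\preceq n^{d-1}$ is not obtained by applying that lemma directly. The paper reroutes through $\SL_d$: when $\det\varphi^*(A)=1$ (the other case being handled by the determinant quotient), the image of $A$ in $\PGL_d(\F)$ lies in $\PSL_d(\F)$, the Chevalley argument is applied to $\SL_d$ with $a(\SL_d)=d-1$, and the bound $[\PGL_d(\F):\PSL_d(\F)]\le d$ is used to pull back to $\GL_d(\F)$. Your proposal labels this ``the $\GL_d$ case'' but reuses the Chevalley lemma as if it applied verbatim; the gap is small and easily filled, but it should be spelled out.
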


\begin{proof}
Fix an embedding $G\hookrightarrow \GL_d$, allowing us to treat elements of $\Gamma$ as invertible matrices with entries in $K$. Because $\Gamma$ is finitely generated, we may assume the transcendence basis of $K$ is finite, so write $K=\Fq(t)(\exes)$ for some indeterminates $x_i$. For notational convenience write $R=\Fq[t][\exes]$. Again using the fact that $\Gamma$ is finitely generated, $\Gamma\leq G(S)$ for some $S=R[g^{-1}]$, $g\in R$.

Let $X$ be a symmetric finite generating set of $\Gamma$. Let $m>0$ such that $g^m\gamma\in \mbox{Mat}_d(R)$ for all  $\gamma\in X$.  Now let $A\in \Gamma$ with $||A||_X=n$ and put $B=g^{mn}A\in \mbox{Mat}_d(R)$.

Since $A$ is a word of length $n$ in the elements of $X$, we may view $B$ as a word of length $n$ in the elements of $g^mX=\{g^m\gamma : \gamma\in X\}$. Let $N$ be larger than the degree or height of any entry of an element of $g^mX$.

If $A$ is not a scalar matrix, then $B$ has a nonzero off-diagonal entry or two diagonal entries with nonzero difference; in this case put $f$ equal to one of these nonzero values. We can ignore the finitely many instances where $A$ is a scalar matrix of determinant 1. If $A=a I_d$ is scalar with determinant not equal to 1, put $f=g^{mnd}(a^d-1).$ Our general strategy is to map $R[\exes]$ to an appropriately sized finite field $\F$ so that $fg$ is not mapped to 0. This map will then extend to a homomorphism $\varphi:S\to \F$ with $\varphi(f)\neq 0$, so that under the induced homomorphism
\[\varphi^*:G(S)\to G(\F),\]
 the image of $A$ is not a scalar matrix or has determinant not equal to 1.

To use our lemmas, we must first bound the degrees of the entries of $B$. Recall that $B$ can be represented as a word of length $n$ in $g^mX$, and each entry of an element of $g^mX$ has degree less than or equal to $N$. Thus each entry of $B$ has degree bounded above by $nN$; in particular, $\deg f\leq ndN$, so if we set $h=fg$, then $\deg h\leq 2ndN$ for sufficiently large $n$. Similar reasoning shows ht$(f)\leq 2ndN$.

Since $h$ is nonzero, it has some nonzero coefficient $h_0(t)\in \Fq[t]$ with $\deg h_0(t)\leq 2ndN$. By Lemma $\ref{lem:FqtHom}$, there exists a field $\F$ and homomorphism $\tau:\Fq[t]\to \F$ such that
$$2n(2dN)\leq|\F|\leq 2qn(2dN)$$
and $\tau(h_0)\neq 0$. Extending $\tau$ in the natural way to
$$\tau:\Fq[t][\exes]\to \F[\exes],$$
note that $\tau(h)\neq0$ and $\deg \tau(h)\leq 2ndN<|\F|$. Hence there exist $\alpha_1,\cdots,\alpha_s\in \F$ so that $\tau(f)(\alpha_1,\cdots, \alpha_s)\in \F^{\times}$, as is easily shown by induction on $s$.

Composing this evaluation map with $\tau$ yields a homomorphism $\theta:R\to \F$ such that $\theta(h)\neq 0$. Since the image of $\theta$ is a field and $h=fg$, $g$ is mapped to a unit by $\theta$, so $\theta$ extends to a ring homomorphism $\varphi:S\to \F$ satisfying $\varphi(f)\neq 0$. Finally, $\varphi$ induces a group homomorphism
\[\varphi^*:G(S)\to G(\F)\]
 with $\varphi^*(A)$ nontrivial;  by the choice of $f$, if det $(\varphi^*(A))=1$ then $\varphi^*(A)$ is not a scalar matrix. By Lemma $\ref{lem:AlgGrpSize}$, $|G(\F)|\leq C|\F|^{\dim(G)}$ for some constant $C$ depending only on $G$, so $|\F|\leq 4qdNn$ and $\FN\preceq n^{\dim(G)}$.

If $G$ is a Chevalley group, let $P$ be a maximal subgroup of minimal index in $G(\F)$, so $[G(\F): P]\leq 2|\F|^{a(G)}$ by Lemma $\ref{lem:ChevalleySize}$. The intersection of all conjugates of $P$ is normal, so since $G(\F)/Z(G(\F))$ is simple, this intersection is contained in $Z(G(\F))$.  But $\det A=1$, so $\varphi^*(A)$ is not a scalar matrix, and hence $\varphi^*(A)\not\in Z(G(\F))$. Thus $\varphi^*(A)$ is not in a subgroup of $G(\F)$ of index at most $2|\F|^{a(G)}$. Hence
\[A\not\in H\leq \Gamma \mbox{ and } [\Gamma:H]\leq 2 |\F|^{a(G)},\] so $\FS\preceq n^{a(G)}.$

Now assume $G=\GL_d$. If $\det(\varphi^*(A))\neq 1$, then the image of $\varphi^*(A)$ in $\F^*$ under the determinant map is nontrivial. If $\det(\varphi^*(A))=1$  then $\varphi^*(A)\in \GL_d(\F)$ is not a scalar matrix, so the image of $\varphi^*(A)$ is nontrivial in $\GL_d(\F)/Z(\GL_d(\F))$, the size of which is of order $|\F|^{d^2-1}$. Hence $F_\Gamma^{\unlhd}(n)\preceq n^{d^2-1}$.

In addition, the image of $\varphi^*(A)$ in $\GL_d(\F)/Z(\GL_d(\F))$ is in the image of $\SL_d(\F)$, which is isomorphic to $\PSL_d(\F)$. Applying the Chevalley group argument from above to $\PSL_d$ and using the fact that $[\PGL_d(\F):\PSL_d(\F)]\leq d$, we find that $F_\Gamma^{\leq}(n)\preceq n^{d-1}$.
\end{proof}

\subsection{Chebotarev density theorem}

To work with coefficients in arbitrary fields, we need to use the Chebotarev density theorem, which also plays an important role in \cite{BouQrf}, \cite{BouKal}, \cite{BouMcR}. In characteristic 0 we use the following higher dimensional generalization of the Chebotarev density theorem, a consequence of Lemma 9.3 and Theorem 9.11 in \cite{Serre}.

\begin{lemma}\label{lem:HighDimCheb}
 Let $F=\Q(\exes)$ and let $K/F$ be Galois with Galois group $G$. Let $\pi(x)$ be the number of degree 1 maximal ideals in $\Z[\exes]$ with residue field of size at most $x$, and let $\pi_1(x)$ be the number of such ideals which are unramified over $K$ such that $|\OK/\mathfrak{m}'|=|\OF/\mathfrak{m}|$ for all $\mathfrak{m'}$ lying over $\mathfrak{m}$. Then $\pi(x)\sim \dfrac{x^{s+1}}{\log(x^{s+1})}$ and $\pi_1(x)\sim \dfrac{1}{|G|}\dfrac{x^{s+1}}{\log(x^{s+1})}.$
\end{lemma}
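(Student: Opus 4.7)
The plan is to deduce the first asymptotic from the prime number theorem applied to an explicit count, and the second from Serre's higher-dimensional Chebotarev density theorem.

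First I would parameterize the degree 1 maximal ideals of $\Z[\exes]$ explicitly: a maximal ideal $\mathfrak{m}$ with residue field $\Fp$ is determined by the images of $x_1,\ldots,x_s$ in $\Fp$, giving a canonical bijection between such ideals and the set $\{p\} \times \Fp^s$. Summing over primes,
\[
\pi(x) = \sum_{p \leq x} p^s,
\]
and a standard partial summation against the prime number theorem (or equivalently comparison with $\int_2^x t^s/\log t \, dt$) yields $\pi(x) \sim x^{s+1}/((s+1)\log x)$, which is precisely $x^{s+1}/\log(x^{s+1})$.

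Next, for $\pi_1(x)$, I would rewrite the stated condition in Galois-theoretic terms: if $\mathfrak{m}$ is unramified in $\OK$ and every $\mathfrak{m}'$ lying over $\mathfrak{m}$ has the same residue field as $\mathfrak{m}$, then $\mathfrak{m}$ splits completely in $K$, so its Frobenius conjugacy class in $G$ is $\{e\}$. Serre's higher-dimensional Chebotarev density theorem (Theorem 9.11 of \cite{Serre}, with the underlying count of degree 1 points supplied by Lemma 9.3) asserts that among degree 1 closed points of $\mathrm{Spec}(\OF)$, the proportion whose Frobenius lies in a conjugacy class $C \subset G$ tends to $|C|/|G|$. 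Taking $C = \{e\}$ and combining with the first asymptotic gives $\pi_1(x) \sim \pi(x)/|G| \sim \frac{1}{|G|} \cdot \frac{x^{s+1}}{\log(x^{s+1})}$, as desired.

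The main obstacle I expect is simply bookkeeping: aligning Serre's scheme-theoretic formulation (closed points of $\mathrm{Spec}(\OF)$, Frobenius classes, natural density) with the elementary language of maximal ideals and residue fields used in the statement, and verifying that the complete-splitting condition really corresponds to trivial Frobenius so that the cited density applies to exactly the ideals being counted here. Once that translation is made, the lemma reduces to a direct appeal to the cited results of Serre.
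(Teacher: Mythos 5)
Your proposal matches the paper's approach exactly: the paper gives no independent proof of this lemma, simply stating it as a consequence of Lemma 9.3 and Theorem 9.11 of Serre's \emph{Lectures on $N_x(p)$}, which is precisely the citation you invoke. Your elaboration — parameterizing degree 1 maximal ideals as $(p, x_1-a_1,\ldots,x_s-a_s)$ to get $\pi(x)=\sum_{p\le x}p^s \sim x^{s+1}/\log(x^{s+1})$ via PNT, and identifying the splitting condition with trivial Frobenius so that Serre's density theorem yields the $1/|G|$ factor — is a correct and faithful unpacking of that citation.
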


In the characteristic $p$ case, we use an effective version of the Chebotarev density theorem for function fields. Let $L$ be a Galois extension of $\Fp(t)$ with Galois group $G$ and define $\pi(x,L/\Fp(t))$ to be the number of primes in $\Fp[t]$ of degree $x$ which are unramified and split completely in $L$. Let $P$ be the set of primes in $\Fp[t]$ which ramify over $L$, and set $D=\deg(\prod_{q\in P} q)$.
\begin{theorem}[\cite{PCheb}, Theorem 1]\label{thm:FpCheb}
Let $\F_{p^m}$ be the algebraic closure of $\Fp$ in $L$. If $m$ divides $x$, then
  \begin{equation*}
    |\pi(x,L/\Fp(t))-\frac{m}{|G|}I_p(x)|\leq \frac{p^{x/2}(2+D)}{|G|x}+D\left(1+\frac{1}{x}\right)
  \end{equation*}
\end{theorem}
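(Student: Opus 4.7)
The statement is an effective Chebotarev-type prime counting result for function fields, and the standard route is via Artin $L$-functions together with the Weil conjectures. Here is the plan.

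First, I would use orthogonality of characters on $G$ to rewrite the counting function in terms of $L$-functions. For each irreducible character $\chi$ of $G$ one defines the Artin $L$-function
\[
L(s,\chi, L/\Fp(t)) = \prod_\mathfrak{p} \det\bigl(1 - \chi(\mathrm{Frob}_\mathfrak{p})N\mathfrak{p}^{-s}\bigr)^{-1},
\]
with appropriate local factors at ramified primes. Its logarithmic derivative produces a Dirichlet series whose coefficients are $\chi$-weighted sums over prime powers, and the orthogonality combination $\sum_\chi \overline{\chi(1)}\chi(\mathrm{Frob}_\mathfrak{p})$ picks out exactly the primes that split completely in $L$. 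This reduces the counting problem to analyzing the individual $L(s,\chi)$.

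Second, for the trivial character the associated $L$-function is the zeta function $(1-p^{-s})^{-1}(1-p^{1-s})^{-1}$ of $\Fp(t)$, whose coefficient in the $p^{-xs}$ position is (after the usual conversion) the count $I_p(x)$ of monic irreducibles of degree $x$, essentially the Gauss formula used in Lemma \ref{lem:FqtHom}. The divisibility hypothesis $m \mid x$ and the factor $m/|G|$ come from tracking splitting through the constant field extension $\F_{p^m}/\Fp$ contained in $L$: a prime of $\Fp[t]$ of degree $x$ can have trivial Frobenius on $\F_{p^m}$ only when $m$ divides $x$, and in that case the density of completely split primes is $m/|G|$ rather than $1/|G|$.

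Third, for each nontrivial character $\chi$ the Weil conjectures supply the key input: rationality of $L(s,\chi)$ as a polynomial $\prod_i(1-\alpha_i p^{-s})$ whose degree is controlled by the conductor, and hence by $D$ (the degree of the product of ramified primes), together with the Riemann Hypothesis $|\alpha_i| = p^{1/2}$. Extracting the coefficient of $p^{-xs}$ in the logarithmic derivative then bounds the nontrivial contributions by a quantity of order $D\cdot p^{x/2}$. Converting between the Chebyshev-type sum $\sum_n \Lambda(n)$ and the prime counting function introduces the factor $1/x$, which is where the $p^{x/2}(2+D)/(|G|x)$ term comes from, while contributions of ramified primes and of prime powers of degree strictly less than $x$ account for the $D(1+1/x)$ term.

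The main obstacle is not conceptual but the precise constant bookkeeping: one must track the degrees of the local factors of each $L(s,\chi)$ at ramified primes and the contribution of prime powers of smaller degree when passing from the logarithmic derivative back to a count of prime ideals of exact degree $x$. With the Weil bounds and the conductor bound on non-trivial $L$-function degrees in place, the rest is routine estimation.
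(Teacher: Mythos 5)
The paper does not prove this statement; it is quoted verbatim as Theorem 1 of the cited reference of Murty and Scherk, and is used as a black box. So there is no internal proof to compare against. That said, your sketch does reproduce, at the level of detail given, the standard argument that appears in that reference: decompose the completely-split prime count via orthogonality of characters of $G$ into a sum over Artin $L$-functions; extract the main term $\frac{m}{|G|}I_p(x)$ from the trivial character, with the divisibility condition $m \mid x$ arising because the Frobenius of a degree-$x$ prime restricted to the constant field $\F_{p^m}$ is the $x$-th power of the $p$-power Frobenius and is trivial exactly when $m \mid x$; and bound the contribution of nontrivial characters by Weil's rationality and Riemann Hypothesis for function-field $L$-functions, with the degree of each $L$-polynomial controlled by the conductor and hence by $D$. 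The two places where real care is required, and which you flag correctly, are (a) converting the $\Lambda$-weighted sum coming from logarithmic derivatives back to a count of primes of exact degree $x$, which is the source of the $1/x$ factors, and (b) accounting for ramified primes and lower-degree prime powers, which produce the $D(1+1/x)$ term. As an outline this is faithful; turning it into a proof requires pinning down the conductor–degree bound for each $L(s,\chi)$ and the bookkeeping of the ramified local factors, which is exactly the content of the Murty–Scherk paper.
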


\begin{lemma}\label{lem:FpCheb}
  Fix  $c_1, c_2>0$. Let $f(y)\in \Fp[t][y]$ be separable with degree $k$. If $n$ is sufficiently large, $h(t)\in\Fp[t]$ has degree at most $c_1 n\log n$, and $f(y)$ has discriminant $\Delta(f)\in \Fp[t]$ of degree less than $c_2\log n$, then there exists $c\leq 2c_1(k!)p^{k!}$, dependent on $n$, so that there exists an irreducible polynomial $g(t)\in \Fp[t]$ of degree at most $\log_p(c n\log n)$ not dividing $h(t)$ such that $f(y)$ factors into distinct linear factors mod $g(t)$.
\end{lemma}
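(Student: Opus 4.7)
The plan is to apply the effective Chebotarev density theorem (Theorem \ref{thm:FpCheb}) to the splitting field $L$ of $f(y)$ over $\Fp(t)$ and then count carefully. Writing $G := \text{Gal}(L/\Fp(t))$, one has $|G| \leq k!$; let $m$ denote the degree over $\Fp$ of the algebraic closure of $\Fp$ in $L$, so $m \mid |G|$ and $m \leq k!$. Any prime of $\Fp[t]$ ramifying in $L$ must divide $\Delta(f)$, so the ramification constant $D$ in Theorem \ref{thm:FpCheb} satisfies $D \leq \deg \Delta(f) < c_2 \log n$. The observation linking Chebotarev to the conclusion is this: if $g(t)$ is irreducible, coprime to $\Delta(f)$, and splits completely in $L$, then $g$ is unramified in $L$ and the triviality of Frobenius at $g$ forces $f(y)$ to factor into distinct linear factors modulo $g(t)$.

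Next I would choose $x$ to be the smallest positive multiple of $m$ for which $p^x \geq 2c_1(|G|/m)\,n\log n$. Since $p^x \leq p^m \cdot 2c_1(|G|/m)\,n\log n \leq 2c_1(k!)\,p^{k!}\,n\log n$, this gives $x \leq \log_p(cn\log n)$ with $c \leq 2c_1(k!)\,p^{k!}$, as required. Invoking Theorem \ref{thm:FpCheb} along with the asymptotic $I_p(x) = p^x/x + O(p^{x/2}/x)$, the number of degree-$x$ primes splitting completely in $L$ is at least
$$\frac{m}{|G|}I_p(x) - \frac{p^{x/2}(2+D)}{|G|x} - D\!\left(1+\tfrac{1}{x}\right) \;\geq\; \frac{2c_1 n\log n}{x}\bigl(1-o(1)\bigr),$$
since the error terms scale like $\sqrt{n\log n}\,\log n / x$ and $\log n$, both dominated by the main term as $n \to \infty$.

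Finally, I would compare this Chebotarev count against the ``bad'' irreducibles of degree $x$: at most $\deg h/x \leq c_1 n\log n/x$ divide $h(t)$, and at most $\deg\Delta(f)/x < c_2\log n/x$ divide $\Delta(f)$. For $n$ sufficiently large the count above strictly exceeds their sum, producing an irreducible $g(t)$ of degree $x \leq \log_p(cn\log n)$ that splits completely in $L$ and is coprime to both $h(t)$ and $\Delta(f)$; by the key observation above, such $g$ satisfies the conclusion. The principal obstacle I anticipate is controlling the error in Theorem \ref{thm:FpCheb}: the ramification-dependent term $p^{x/2}D/(|G|x)$ scales like $\sqrt{n\log n}\,\log n / x$, and only the hypothesis $\deg\Delta(f) < c_2\log n$ keeps this safely below the main term of order $n\log n/x$. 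A secondary nuisance is that one must take $x$ divisible by $m$ rather than freely; this is precisely what forces the extra $p^{k!}$ factor in the bound on $c$.
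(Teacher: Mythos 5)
Your proof is correct and follows essentially the same route as the paper: apply the effective Chebotarev bound to the splitting field $L$, use $|G|\leq k!$ and $D\leq\deg\Delta(f)<c_2\log n$, choose $x\approx\log_p(n\log n)$ adjusted for divisibility, and compare the count of completely split degree-$x$ primes against the number of bad ones. The two places you differ are minor but worth noting. First, you take $x$ to be the least multiple of $m$ with $p^x\geq 2c_1(|G|/m)n\log n$, while the paper chooses $x$ as a multiple of $k!$ lying in an interval of length $k!$; since $m\mid k!$ both yield the required $c\leq 2c_1(k!)p^{k!}$. Second, the paper discards primes dividing $\Delta(f)$ by observing that no degree-$x$ prime can divide $\Delta(f)$ once $x>\deg\Delta(f)$, whereas you instead explicitly subtract the at most $\deg\Delta(f)/x<c_2\log n/x$ degree-$x$ irreducibles dividing $\Delta(f)$ from the Chebotarev count. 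Your version is slightly more robust: it does not silently rely on $x>c_2\log n$, which the choice $x\approx(\log n)/\log p$ does not guarantee when $c_2$ is large.
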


\begin{proof}
  Let $h(t)$ and $f(y)$ satisfy the assumptions of the lemma, and let $L$ be the splitting field of $f(y)$.
Let $Q_x$ be the set of primes in $\Fp[t]$ of degree $x$ which are unramified and split completely in $L$, so $\pi(x, L/\Fp(t))=|Q_x|$. If $x>\deg(\Delta(f))$, then no element of $Q_x$ divides $\Delta(f)$, and hence $f(y)$ factors into distinct linear factors mod $q(t)$ for all $q(t)\in Q_x$.

We want to find $x$ of an appropriate size so that some $g(t)\in Q_x$ does not divide $h(t)$. To that end, observe that
  \begin{equation*}
    \deg\left(\prod_{q(t)\in Q_x} q(t)\right)=x \pi(x,L/\Fp(t)).
  \end{equation*}

  Let $\F_{p^m}$ be the algebraic closure of $\Fp$ in $L$. Since $[L:\Fp(t)]\leq k!$, $|G|\leq k!$ and $m|k!$, so if $m|x$ then Theorem $\ref{thm:FpCheb}$ yields
  \begin{align*}
    \pi(x,L/\Fp(t))&\geq \frac{m}{k!}I_p(x)-\frac{p^{x/2}(2+D)}{x|G|}-D\left(1+\frac{1}{x}\right) \\
    &\geq \frac{1}{k!}I_p(x)-\frac{p^{x/2}(2+D)}{x}-2D.
  \end{align*}

   Since $\deg(h(t))\leq c_1 n \log n$, if $x\pi(x,L/\Fp(t))>c_1n\log n$, then some $g(t)\in Q_x$ will not divide $h(t)$. Using the estimates $I_p(x)\geq \dfrac{p^x}{2x}$ and $D\leq \deg (\Delta(f))\leq c_2\log n$, we have
   \begin{align*}
    x\pi(x,L/\Fp(t)) &\geq \frac{xI_p(x)}{k!}-p^{x/2}(2+D)-2Dx \\
    &\geq \frac{p^x}{2(k!)}-p^{x/2}(2+c_2\log n)-2c_2x\log n.
  \end{align*}

   Then
  \begin{equation}\label{eq:FpCheb}
    x\pi(x,L/\Fp(t))-c_1n\log n\geq \frac{p^x}{2(k!)}-p^{x/2}(2+c_2\log n)-2c_2x\log n-c_1n\log n.
  \end{equation}
 If we set $x=\log_p(c'n \log n)$ for some $c'>0$, then the right hand side of $\eqref{eq:FpCheb}$ becomes
 \begin{equation*}
   \frac{c'n\log n}{2(k!)}-\sqrt{c'n\log n}(2+c_2\log n)-2c_2\log_p(n\log n)\log n-c_1n \log n.
 \end{equation*}
 The highest order terms in $n$ are $\dfrac{c'n\log n}{2(k!)}$ and $c_1n\log n$. Hence if $c'>2c_1(k!)$ and $n$ is sufficiently large, then the above expression is positive.

 However, we also need $x$ to be an integer divisible by $m$, while $\log_p(c' n\log n)$ may not even be an integer. Since $m$ divides $k!$, it is enough to have $k!$ divide $x$. For any $n$, the interval $(\log_p(2c_1(k!)n\log n), \log_p (2c_1(k!)p^{k!}n\log n)]$ has length $k!$, so it contains an integer multiple of $k!$. Thus there exists $c>0$ satisfying
 \begin{equation*}
  2c_1(k!)<c\leq 2c_1(k!)p^{k!}
 \end{equation*}
 such that $x=\log_p( c n\log n)\in k! \Z$. Note that while the choice of $c$ depends on $n$, its absolute value is bounded independent of $n$.

 For $n$ sufficiently large, the above choice of $c$ yields $x\pi(x,L/\Fp(t))>\deg(h(t))$, so we conclude that there is some irreducible $g(t)\in \Fp[t]$ such that $g(t)$ does not divide $h(t)$ and $f(y) \mod g(t)$ factors into distinct linear factors.
\end{proof}

\subsection{Proof of Theorems $\ref{thm:GLdRF}$ and $\ref{thm:AlgGroupRF}$} %$

The primitive element theorem plays a key role in the proof of Theorem $\ref{thm:GLdRF}$; this always applies to number fields, but finite extensions of $\Fp(t)$ can be inseparable. The following lemma allows us to always work in the separable situation.

\begin{lemma}\label{lem:CharpSep}

Let $p$ be a prime and put $E_0=\Fp(\exes)$ for some $\exes$ algebraically independent over $\Fp$. If $L_0/E_0$ is a finite extension, then there is some positive integer $m$ such that if $\tilde{x}_j=x_j^{1/p^m}$ for $1\leq j\leq s$, then $L=L_0(\tilde{x}_1,\cdots, \tilde{x}_s)$ is a separable extension of $E=\F_q(\tilde{x}_1,\cdots, \tilde{x}_s)$.
\end{lemma}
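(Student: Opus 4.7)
The plan is to pass to the perfect closure of $E_0$ and descend separability back to a finite level. Since $\Fp$ is perfect, the perfect closure is
\[
E_0^{\mathrm{perf}} \;=\; \bigcup_{m\geq 0} \tilde E_m, \qquad \tilde E_m \;=\; \Fp\bigl(x_1^{1/p^m},\ldots, x_s^{1/p^m}\bigr),
\]
and each $\tilde E_m$ sits inside this perfect overfield. Because any algebraic extension of a perfect field is separable, $L_0\cdot E_0^{\mathrm{perf}}$ is automatically separable over $E_0^{\mathrm{perf}}$; the goal is to show that this separability is already visible at some finite stage $\tilde E_m$.

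First I would write $L_0 = E_0(\alpha_1,\ldots, \alpha_r)$ with each $\alpha_i$ algebraic over $E_0$, which is possible since $L_0/E_0$ is finite. For each $i$ let $h_i(y)\in E_0^{\mathrm{perf}}[y]$ be the minimal polynomial of $\alpha_i$ over $E_0^{\mathrm{perf}}$; because $E_0^{\mathrm{perf}}$ is perfect, each $h_i$ is separable, since an inseparable irreducible polynomial over a perfect field would factor as a $p$-th power (every coefficient is itself a $p$-th power), contradicting irreducibility. Since each $h_i$ has only finitely many coefficients and $E_0^{\mathrm{perf}} = \bigcup_m \tilde E_m$, I can pick a single $m$ so that every coefficient of every $h_i$ already lies in $\tilde E_m$.

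With this $m$ fixed, each $\alpha_i$ satisfies the separable polynomial $h_i(y)\in \tilde E_m[y]$, so its minimal polynomial over $\tilde E_m$ divides $h_i$ in $\tilde E_m[y]$ and is itself separable. Hence each $\alpha_i$ is separable over $\tilde E_m$, and separability is preserved when the base field is enlarged, so each $\alpha_i$ remains separable over $\tilde E_m(\alpha_1,\ldots,\alpha_{i-1})$. Consequently $L = L_0(\tilde x_1,\ldots,\tilde x_s) = \tilde E_m(\alpha_1,\ldots,\alpha_r)$ is built by successively adjoining separable elements, so $L/\tilde E_m$ is separable; since $\tilde E_m \subseteq E = \Fq(\tilde x_1,\ldots,\tilde x_s)\subseteq L$, the extension $L/E$ is separable as well. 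The only real content here is recognizing that separability over the infinite perfect closure, being encoded by the finitely many coefficients of the $h_i$, descends to some finite level $\tilde E_m$; once this is made precise, the rest follows from routine properties of separable extensions.
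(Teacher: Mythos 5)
Your proof is correct and self-contained, but it takes a more abstract route than the paper's. The paper works explicitly: for each generator $\alpha_i$ of $L_0/E_0$, it factors the minimal polynomial as $f_i(y)=g_i(y^{p^{m_i}})$ with $g_i$ irreducible and separable over $E_0$, sets $m=\max m_i$, and then produces a concrete separable polynomial $\tilde g_i\in E[y]$ vanishing at $\alpha_i$ by substituting $x_j\mapsto x_j^{1/p^{m_i}}$, using the characteristic-$p$ identity $\tilde g_i(y)^{p^{m_i}}=g_i(y^{p^{m_i}})=f_i(y)$ (which hinges on $\Fp$ being perfect so that the coefficients are fixed by Frobenius). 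You instead pass to the perfect closure $E_0^{\mathrm{perf}}=\bigcup_m\tilde E_m$, invoke the general fact that algebraic extensions of perfect fields are separable, and then descend to a finite level by observing that the finitely many coefficients of the minimal polynomials of the $\alpha_i$ over $E_0^{\mathrm{perf}}$ already lie in some $\tilde E_m$. The two arguments are morally the same --- your $h_i$ is, up to a constant, the paper's $\tilde g_i$, and both exploit that $\Fp^{1/p^m}=\Fp$ --- but yours trades the explicit computation for a direct-limit compactness argument, which is cleaner if one is comfortable with perfect closures, whereas the paper's version makes the power $m$ and the separable annihilating polynomial completely explicit. Both are fine; just note that the $\Fq$ appearing in the statement of $E$ is presumably a typo for $\Fp$, and your argument, like the paper's, is agnostic to this.
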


\begin{proof}
First note that since $L_0/E_0$ is finite, there are some $\alpha_1,\cdots, \alpha_k\in L_0$ such that $L_0=E_0(\alpha_1,\cdots, \alpha_k)$. Each $\alpha_i$ is the root of an irreducible polynomial $f_i(y)\in E_0[y]$. In turn, each $f_i(y)=g_i(y^{p^{m_i}})$ for some irreducible, separable $g_i(y)\in E_0[y]$ and some positive integer $m_i$. Set $m=\max\{m_i\}$, put $\tilde{x}_j=x_j^{1/p^m}$ for $1\leq j\leq s$, and let $E=\F_q(\tilde{x}_1, \cdots, \tilde{x}_s).$ For each $i$, form $\tilde{g}_i(y)\in E[y]$ by replacing each $x_j$ in $g_i(y)$ by $\tilde{x}_j^{p^{m-m_i}}=x_j^{1/p^{m_i}}$.

Since we are in characteristic $p$, we then have
\begin{equation*}
  f_i(y)=g_i(y^{p^{m_i}})=\tilde{g}_i(y)^{p^{m_i}},
\end{equation*}

\noindent so $\tilde{g}_i(\alpha_i)=0$. Each $\tilde{g}_i(y)$ is separable, so $L=L_0(\tilde{x}_1, \cdots, \tilde{x}_s)$ is separable over $E$.
\end{proof}

 We now prove the remaining parts of Theorems $\ref{thm:GLdRF}$ and $\ref{thm:AlgGroupRF}$ together.

\begin{theorem}
  Let $G$ be an affine algebraic group scheme defined over $\Z$, $K$ be a field, and $\Gamma\leq G(K)$ be a finitely generated subgroup. Put $g(n)=n$ if char $K=0$ and $g(n)=n\log n$ if char $K>0$.

  Then $F^{\unlhd}_{\Gamma}(n)\preceq g(n)^{\dim(G)}$ and, if $G$ is a Chevalley group,  $F^{\leq}_\Gamma(n)\preceq g(n)^{a(G)}.$ If $G=\GL_d$, then $F^{\unlhd}_\Gamma(n)\preceq g(n)^{d^2-1}$ and $F^{\leq}_\Gamma(n)\preceq g(n)^{d-1}$.
\end{theorem}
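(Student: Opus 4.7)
The plan is to mirror the argument of Proposition \ref{prop:AlgGroupFpTransRF}, producing a homomorphism from $\Gamma$ into $G(\F)$ for a finite field $\F$ whose size is bounded by a constant times $g(n)$, and then invoking Lemmas \ref{lem:AlgGrpSize} and \ref{lem:ChevalleySize} to convert this into the desired bounds on $\FN$ and $\FS$. After fixing an embedding $G\hookrightarrow \GL_d$, I can replace $K$ by the subfield generated by the entries of a finite generating set, so without loss $K$ is finitely generated over its prime field and hence a finite extension of a purely transcendental extension $E$. In characteristic $p$, Lemma \ref{lem:CharpSep} lets me enlarge $E$ so that $K/E$ is separable; by the primitive element theorem, $K = E(\alpha)$ with $\alpha$ a root of a separable irreducible polynomial $f(y)$ whose coefficients, after clearing denominators, lie in $\mathcal{O}_E$ (equal to $\Z[\exes]$ in characteristic $0$ and to $\Fp[t,x_1,\ldots,x_{s-1}]$ in characteristic $p$). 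After localizing at an element $g$ containing the denominators of the generators together with the leading coefficient and discriminant of $f$, I obtain $\Gamma\leq G(R[\alpha])$ for $R=\mathcal{O}_E[1/g]$.

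For $A\in \Gamma$ of word length $n$, the entry-or-scalar extraction of Proposition \ref{prop:AlgGroupFpTransRF} produces a nonzero witness $h\in R[\alpha]$. Writing $h=\sum_i h_i\alpha^i$ with $h_i\in R$ and choosing a nonzero coefficient $h_{i_0}$, set $H=h_{i_0}\cdot g\cdot \mathrm{disc}(f)\in \mathcal{O}_E$ (again after clearing denominators). Word-length estimates give $\deg H = O(n)$, and in positive characteristic also $\mathrm{ht}(H) = O(n)$.

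The central step is to locate a maximal ideal $\mathfrak{m}$ of $\mathcal{O}_E$ with residue field $\F$ of size $O(g(n))$, avoiding $H$ and such that $f(y)$ splits into distinct linear factors modulo $\mathfrak{m}$. Any root of $\bar f$ in $\F$ then lifts the reduction map to a ring homomorphism $R[\alpha]\to \F$, yielding a group homomorphism $\varphi^\ast:G(R[\alpha])\to G(\F)$ sending $A$ to an element that is nontrivial, and in the non-scalar case also non-central. In characteristic $0$ this is exactly Lemma \ref{lem:HighDimCheb}: among the $\sim |G|^{-1}x^{s+1}/\log(x^{s+1})$ degree-one primes with residue size at most $x$ that split completely in the splitting field of $f$, only $O(x^s\deg H/\log x)$ contain $H$, so the two counts balance at $x$ of order $n$ and yield $|\F|=O(n)$. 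In characteristic $p$ I proceed in two stages: first use Lemma \ref{lem:FptHom} to substitute polynomials of degree $O(\log n)$ for $x_1,\ldots,x_{s-1}$ so that the images of both $H$ and $\mathrm{disc}(f)$ in $\Fp[t]$ remain nonzero; the substituted $\bar f(y)\in \Fp[t][y]$ then has discriminant of $t$-degree $O(\log n)$ and the image $\bar H\in \Fp[t]$ has $t$-degree $O(n\log n)$. Lemma \ref{lem:FpCheb} then produces an irreducible $q(t)\in \Fp[t]$ of degree at most $\log_p(Cn\log n)$ that does not divide $\bar H$ and modulo which $\bar f$ splits into distinct linear factors; the residue field has size $O(n\log n)$.

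The main obstacle is this Chebotarev step in positive characteristic: the effective bound in Theorem \ref{thm:FpCheb} is much weaker than its characteristic-zero analogue, and the joint requirements of avoiding $H$, splitting $f$, and forcing the residue degree to be divisible by $|\mathrm{Gal}(f)|$ create the extra logarithmic loss that cannot be recovered by the methods available here. Once $\mathfrak{m}$ and $\F$ are in hand, the conclusion follows exactly as in Proposition \ref{prop:AlgGroupFpTransRF}: Lemma \ref{lem:AlgGrpSize} yields $|G(\F)|\preceq g(n)^{\dim(G)}$, giving $\FN \preceq g(n)^{\dim(G)}$; when $G$ is a Chevalley group, Lemma \ref{lem:ChevalleySize} together with the maximal-subgroup-intersection argument gives $\FS \preceq g(n)^{a(G)}$; and in the $\GL_d$ case the sharper exponents $d^2-1$ and $d-1$ follow by passing to $\PGL_d(\F)$ and $\PSL_d(\F)$ as in Proposition \ref{prop:AlgGroupFpTransRF}.
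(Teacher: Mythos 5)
Your proposal follows the paper's route almost step for step: fix an embedding $G\hookrightarrow\GL_d$, reduce to $K$ a finite separable extension of a purely transcendental extension, extract a nonzero witness $H\in\Z[\exes]$ (resp.\ $\Fp[t][\exes]$) from a word of length $n$, and apply the appropriate Chebotarev statement to find a small residue field detecting the element. Your handling of the positive-characteristic side (substitute via Lemma~\ref{lem:FptHom} to shrink to $\Fp[t]$, then use Lemma~\ref{lem:FpCheb}, accepting the extra $\log n$) is exactly the paper's. The issue lies in the characteristic-zero Chebotarev count.

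You claim that among degree-one maximal ideals of $\Z[\exes]$ with residue size at most $x$, only $O(x^s\deg H/\log x)$ contain $H$. That estimate is correct for primes $p$ at which $H$ has a nonzero reduction mod $p$: the vanishing locus of $\bar H$ in $\F_p^s$ has size at most $s(\deg H)p^{s-1}$, and summing $p^{s-1}$ over $p\le x$ gives the stated bound. But it omits the primes $p$ that divide the \emph{content} of $H$ (every integer coefficient of $H$); for such $p$, all $p^s$ degree-one ideals over $p$ contain $H$, and $p^s$ is not $O(p^{s-1}\deg H)$. Bounding these requires a height estimate on $H$. Since $H$ arises from entries of a product of $n$ integer matrices, its coefficients (hence its content) are of size $\alpha^n$ for some $\alpha>1$ --- exponential, not linear, in $n$. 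The paper uses exactly this: at most $2n\log\alpha/\log n$ primes larger than $\sqrt n$ can divide the content, and with the split of the sum at $\sqrt n$, the bad-prime count is $O((n/\log n)M^s)$, which still balances against $\pi_1(Cn)\sim \tfrac{1}{m_0}(Cn)^{s+1}/\big((s+1)\log(Cn)\big)$ so $|\F|=O(n)$ survives. Your remark that $\mathrm{ht}(H)=O(n)$ holds ``also in positive characteristic'' suggests you were treating the integer height as linear; it is not, and without the exponential height bound together with the split sum, the count of primes containing $H$ is unjustified and the characteristic-zero argument does not close.
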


\begin{proof}
Fix an embedding $G\hookrightarrow \GL_d$ ($G\hookrightarrow \SL_d$ if $G$ is a Chevalley group) and let $\Gamma=\langle X\rangle\leq G(K)$, where $X$ is finite and symmetric. We may assume $K$ is the field generated by the entries of the elements of $X$. We first consider the case char $K=0$, though we will see later that most of the arguments leading up to the use of the Chebotarev density theorem apply when char $K>0$.

Since $K$ is a finitely generated field, $K$ is a finite extension of $F=\Q(\exes)$ for some algebraically independent elements $\exes$. Replacing $K$ by its Galois closure if necessary, we may assume $K/F$ is Galois. By the primitive element theorem, $K=\Q(\exes)[\alpha]$ for some $\alpha\in K$, which we can choose to be integral over $\Z[\exes]=\mathcal{O}_F$. Let $f(y)\in \mathcal{O}_F[y]$ be the minimal polynomial for $\alpha$ over $\Q(\exes)$ and set $k=\deg f(y)$.

The entries of the elements of $X$ generate a ring contained in $\mathcal{O}_F[ g(\exes)^{-1} ][\alpha]=R[\alpha]$ for some $g\in \mathcal{O}_F$. Let $J$ be the ideal of $R[y]$ generated by $f(y)$. If the ring homomorphism
\begin{equation*}
  \epsilon_\alpha: F[y]\to F[\alpha]=K
\end{equation*}
is evaluation of $y$ to $\alpha$, then clearly $\ker\epsilon_\alpha$ is the ideal generated by $f(y)$. We claim the kernel of $\epsilon_\alpha|_{R[y]}$ is $J$, so that $R[\alpha]\cong R[y]/J$. This follows from the fact that $f(y)$ is monic; if some element of $R[y]$ is a multiple of $f(y)$ in $K[y]$, then it must in fact be a multiple in $R[y]$, as is seen by an easy computation of coefficients.

We now present an outline of the proof. Let $A\in \Gamma$ with $||A||_X=n$. Using the above argument, we consider $\Gamma$ as being embedded in $G(R[y]/J)$. By using appropriate coset representatives and multiplication to eliminate inverses, we examine the entries of $A$ as elements of $\OF[y]$, i.e. as polynomials with integer coefficients. We then produce a homomorphism $\OF[y]\to \Fp[y]$ under which an entry $A$ remains nontrivial and the image of $f(y)$ splits into distinct linear factors.  The end result is a homomorphism $R[y]\to \Fp$ which factors through $J$, inducing a homomorphism $G(R[y]/J)\to G(\Fp)$ which detects $A$.

Accomplishing this with no regard for the size of $G(\Fp)$ is fairly straightforward, but to achieve the desired bound, we must keep track of certain details. This is the reason we prefer to work in $\OF[y]$; these are simply polynomials with integer coefficients, with easily tracked ``size" properties.

So let $A\in \Gamma$ be nontrivial with $||A||_X= n$. If $h\in R[y]$, set $\widetilde{h}$ to be the element of $R[y]$ with $\widetilde{h}\equiv h \mod{J}$ and $\deg_y \widetilde{h}<k$. If $b=h+J\in R[\alpha]$, set $\widetilde{b}=\widetilde{h}$. For each $\gamma\in X$, let $\widetilde{\gamma}$ be the element of $\Mat_d(R[y])$ with $\widetilde{\gamma}_{ij}=\widetilde{\gamma_{ij}}$. Put $\widetilde{X}=\{\widetilde{\gamma}|\gamma\in X\}$. Let $m>0$ such that $g^m\widetilde{\gamma}\in \Mat_d(\OF[y])$ for all $\gamma\in X$. Let $N$ be the maximum degree of the entries of all the $g^m \widetilde{\gamma}$ as polynomials in $\exes$.

If $A=\gamma_1\cdots \gamma_n$, $\gamma_i\in X$, let $\widetilde{A}=\widetilde{\gamma_1}\cdots\widetilde{\gamma_n}$. Then $(g^m)^n\widetilde{A}=B$ is a product of $n$ elements chosen from $g^m\widetilde{X}$, so $B\in \Mat_d(\OF[y])$. For simplicity, suppose $B$ has a nonzero off-diagonal entry $h(\exes,y)=h$ which is not divisible by $f(y)$; the other cases can be treated as in the proof of Proposition $\ref{prop:AlgGroupFpTransRF}.$ Then for some constant $\alpha_0$ depending on $g^m$, $X$, and $s$,  we have
\begin{equation*}
  \deg_y h \leq (k-1)n,  \deg_{\exes}h\leq Nn, \mbox{ and } \mathrm{ht}(h)\leq \alpha_0^n,
\end{equation*} where ht$(h)$ is the height of $h$, the largest absolute value of a coefficient of $h$.

We want to ensure that $h$ continues to not be divisible by $f(y)$ when we evaluate the $x_i$; the easiest way to accomplish this is by degree considerations, so we now replace $h$ by $\widetilde{h}$. Since our goal is to map this element to something nonzero, it will then suffice to clear denominators and map the resulting polynomial to something nonzero. We need to do this carefully to keep track of how the $x$ degrees and coefficient sizes change.

Write
  \begin{equation*}
    f(y)=y^k+\sum_{j=0}^{k-1} a_j(\exes)y^j,
  \end{equation*}
where $a_j\in \OF$, and put $M$ to be the maximum degree of the $a_j$. Then for $r>k$, the coefficients of $\widetilde{y^r}$ will be sums of products of the $a_j$. For example,

\begin{align*}
  y^{k+1}= y\cdot y^k&\equiv y\left(-\sum_{j=0}^{k-1} a_jy^j\right) \mod{J}\\
   &= -a_{k-1} y^k-\sum_{j=0}^{k-2} a_jy^{j+1} \\
   &\equiv a_{k-1}\sum_{j=0}^{k-1} a_jy^j-\sum_{j=0}^{k-2} a_jy^{j+1} \mod{J}\\
   &= a_0a_{k-1}+\sum_{j=1}^{k-1} (a_ja_{k-1}-a_{j-1})y^j\\
   &=\widetilde{y^{k+1}}.
\end{align*}

As the above example helps illustrate, each $\widetilde{y^r}\in \OF[y]$ and the coefficients of $\widetilde{y^r}$ will include products of at most $r-(k-1)$ coefficients of $f(y)$, so $\widetilde{y^{(k-1)n}}$ includes products of at most $(k-1)(n-1)$ terms. Hence if $a(\exes)$ is a coefficient of $\widetilde{y^r}$ with $r\leq (k-1)n$, then
\begin{equation*}
  \deg a\leq M(n-1)(k-1) \mbox{ and }\mathrm{ht}(a)\leq \beta^{(n-1)(k-1)}
\end{equation*}
 for some $\beta$ independent of $n$.

We obtain $\widetilde{h}\in \OF[y]$ by replacing each $y^r$ by $\widetilde{y^r}$. Using the size and degree estimates on $\widetilde{y^r}$, we have
 \begin{equation*}
   \deg_y \widetilde{h} < k, \mbox{ } \deg_{\exes}\widetilde{h}\leq N_0n, \text{ and } \text{ht}(\widetilde{h})\leq \alpha_0^n,
 \end{equation*}
where $N_0$ and $\alpha_0$ are independent of $n$.

Viewing $\widetilde{h}$ as a polynomial with coefficients in $\OF=\Z[\exes]$, some coefficient $b(\exes)=b$ of $\widetilde{h}$ is nonzero. Let $\Delta(f(y))$ be the discriminant of $f(y)$, an element of $\OF$. Consider the polynomial
\begin{equation}\label{eq:bdef}
b'(\exes)=g(\exes)\Delta(f(y))b(\exes)\in \OF.
\end{equation}
Since the only term in this product that depends on $n$ is $b(\exes)$, $b'$ retains the properties from $\widetilde{h}$ that its degree is linear in $n,$ bounded by $cn$, and its height is exponential in $n$, bounded by $\alpha^n$.

Let $\OK$ be the integral closure of $\OF$ in $K$, and recall the definitions of $\pi(x)$ and $\pi_1(x)$ from Lemma $\ref{lem:HighDimCheb}$. We note that for each ideal $\mathfrak{m}$ counted in $\pi_1(x)$, $f(y)\mod \mathfrak{m}$ factors into linear factors.

We wish to find $\mathfrak{m}\unlhd \OF$ with $|\OF/\mathfrak{m}|\leq Cn$ for some constant $C$ independent of $n$ such that $b'(\exes)\mod \mathfrak{m}\neq 0$ and $f(y)\mod\mathfrak{m}$ factors into distinct linear factors. There are $\pi_1(Cn)$ ideals that satisfy the latter condition. We count the number that fail the first.

  Each degree one maximal ideal of $\OF=\Z[\exes]$ is of the form $(p, x_1-a_1,\cdots, x_s-a_s)$ for some $0\leq a_i\leq p-1$. For such an ideal $\mathfrak{m}$, $b'(\exes)\in \mathfrak{m}$ if and only if $b'(a_1,\cdots, a_s)\equiv 0\mod p$. Set
  \begin{equation*}
    X_p(b')=\{\textbf{a}\in \Fp^s: b'(\textbf{a})\equiv 0\mod p\}.
  \end{equation*}
   Then $|X_p(b')|=p^s$ if $p|b'$, and a straightforward induction on $s$ shows $|X_p(b')|\leq s\deg(b')p^{s-1}$ if $p$ does not divide $b'$. So if $M>0$,
  \begin{equation*}
    \sum_{p\leq M} |X_p(b')|\leq s\deg(b')\sum_{p\leq M} p^{s-1}+\sum_{p\leq M, p|b'}p^s.
  \end{equation*}

\noindent We can split the second sum into two as
\begin{equation}\label{eq:sum2}
  \sum_{p\leq M, p|b'}p^s=\sum_{p\leq \sqrt{n}, p|b'}p^s+\sum_{\sqrt{n}<p\leq M, p|b'}p^s.
\end{equation}

 Each prime in the second sum of $\eqref{eq:sum2}$ is greater than $\sqrt{n}$, so if there are $k$ terms in the sum, the product of the involved primes is at least $(\sqrt{n})^k$. Since $b'$ has height $\alpha^n$, $(\sqrt{n})^k\leq \alpha^n$, so $k\leq 2n\log\alpha/\log n$. Hence
 \begin{equation*}
   \sum_{\sqrt{n}<p\leq M, p|b'}p^s\leq \frac{2n\log\alpha}{\log n}M^s.
 \end{equation*}
Letting $n$ be sufficiently large, Lemma $\ref{lem:HighDimCheb}$ gives
\begin{equation*}
   \sum_{p\leq \sqrt{n}}p^s\leq 2 \frac{(\sqrt{n})^{s+1}}{\log ((\sqrt{n})^{s+1})},
 \end{equation*}
so if we put $M=Cn$ for some $C>1$ to be determined, we conclude that
\begin{equation*}
 \sum_{p\leq M, p|b'}p^s \leq \frac{4n\log \alpha}{\log n}M^s.
 \end{equation*}
If we let $m_0=|\text{Gal}(K/F)|$, then Lemma $\ref{lem:HighDimCheb}$ also gives
\begin{equation*}
\pi_1(M)\geq \dfrac{1}{2m_0} \dfrac{M^{s+1}}{\log (M^{s+1})} \mbox{ and } \sum_{p\leq M} p^{s-1}\leq 2 \dfrac{M^s}{\log(M^s)},
\end{equation*}
 so recalling that $\deg(b')\leq cn$, we have
 \begin{equation*}
   \sum_{p\leq M} |X_p(b')|\leq 2scn \dfrac{M^s}{\log(M^s)}+\frac{4n\log\alpha}{\log n}M^s=2nM^s\left(\frac{c}{\log M}+\frac{2\log\alpha}{\log n}\right).
 \end{equation*}

 Then we want
 \begin{equation*}
   \frac{1}{2m_0} \frac{M^{s+1}}{\log (M^{s+1})}> 2nM^s\left(\frac{c}{\log M}+\frac{2\log\alpha}{\log n}\right)\Leftrightarrow M> 4m_0(s+1)n\left(c+\frac{2\log \alpha \log M}{\log n}\right).
 \end{equation*}

 We can choose
 \begin{equation*}
 C>4m_0(s+1)\left(c+\log \alpha\left(1+\dfrac{\log C}{\log n}\right)\right)
 \end{equation*}
  independent of $n$ when $n$ is large since $\log C/\log n$ becomes arbitrarily small. Hence with $M=Cn$, we have $\pi_1(M)> \sum_{p\leq M} |X_p(b)|$. The number on the right side of this inequality is the number of degree one maximal ideals containing $b'(\exes)$, so we can in fact choose a maximal ideal $\mathfrak{m}$ of $\OF$ with $b'(\exes)\neq 0\mod \mathfrak{m}$, $f(y)\mod \mathfrak{m}$ factoring into distinct linear factors, and $\OF/\mathfrak{m}\cong \Fp$, where $p\leq M=Cn$.

  Now consider the homomorphism
  \begin{equation*}
  \psi:\OF[y]\to (\OF/\mathfrak{m})[y]\cong \Fp[y].
  \end{equation*}
   Since $\psi(b')\neq 0$ and $g|b'$, we have $\psi(g)\neq 0$, so $\psi$ extends to
   \begin{equation*}
   \pi: R[y]=\OF[g^{-1}][y]\to \Fp[y]
   \end{equation*}
    with $\pi(b')\neq 0$ and $\pi(f(y))$ a product of distinct linear polynomials.

Recalling the definition of $b'=b'(\exes)$, $\pi(b')\neq 0$ implies $\pi(\widetilde{h})\neq 0$. By our choice of $\widetilde{h}$, $\deg \pi(\widetilde{h})<\deg \pi(f)$, so $\pi(f)$ does not divide $\pi(\widetilde{h})$. In particular, $\pi(f)$ has some linear factor $y-\lambda\in \Fp[y]$ that does not divide $\pi(\widetilde{h})$. Hence under the evaluation map $\epsilon_\lambda:\Fp[y]\to \Fp$ that sends $y$ to $\lambda$, $\pi(f)$ is sent to 0 and $\pi(\widetilde{h})$ remains nontrivial. Thus we have a homomorphism
\begin{equation*}
  \epsilon_\lambda\circ\pi:R[y]\to \Fp
\end{equation*}
which maps $f(y)$ to 0 and maps $\widetilde{h}(y)$ to a nonzero element of $\Fp$. This map thus factors through $J=(f(y))\unlhd R[y]$, yielding the commutative diagram below.

\begin{center}
\hspace{0.3in} \xymatrix{
R[y] \ar[r]^\pi \ar[rd]
&\Fp[y] \ar[r]^{\epsilon_a}
&\Fp \\
&R[y]/J \ar[ru]_\varphi}
\end{center}

Recall from the beginning of the proof that $h=B_{ij}\equiv g^{mn}A_{ij}\mod{J}$ for some $i\neq j$, and that $\widetilde{h}\equiv h \mod{J}$. Then by the above diagram, the homomorphism $\varphi:R[y]/J\to \Fp$ satisfies
\begin{equation*}
0\neq \varphi(\widetilde{h}+J)=\varphi(h+J)=\varphi(g^{mn}A_{ij}+J).
\end{equation*}
 Since $g\in R^\times$, $\varphi(g^{mn}+J)\neq 0$, so we conclude that $\varphi(A_{ij}+J)\neq 0$. Thus the ring homomorphism $\varphi$ induces a group homomorphism
\begin{equation*}
  \varphi^*:G(R[y]/J)\to G(\Fp)
\end{equation*}
with $\varphi^*(A)$ a nontrivial, non-diagonal matrix. Restricting $\varphi^*$ to $\Gamma$ yields the desired homomorphism.

By the choice of $\mathfrak{m}$ we have $|\Fp|\leq Cn$, so we conclude $F^{\unlhd}_\Gamma(n)\preceq n^{\dim(G)}$.
The remaining bounds are proved as at the end of the proof of Proposition $\ref{prop:AlgGroupFpTransRF}.$

\bigskip

Now consider the case char $K=p$. Then $K$ is a finite extension of $F=\Fp(t,\exes)$ for some algebraically independent elements $t,x_1,\cdots, x_s$. By Lemma $\ref{lem:CharpSep}$ we can assume $K$ is a separable extension of $F$. As in the characteristic 0 case we may then replace $K$ by its Galois closure and assume $K/F$ is Galois. By the primitive element theorem, $K=F[\alpha]$ for some $\alpha\in K$. We again can assume $\alpha$ is integral, and we let $f(y)\in \Fp[t][\exes][y]$ be the minimal monic polynomial for $\alpha$ over $\Fp(t)(\exes)$, with $\deg f(y)=k$. In this context $\OF=\Fp[t][\exes]$ and $R=\OF[g^{-1}]$, $g\in \OF$.

One can now perform the same steps as in the characteristic 0 case, replacing $\Z$ by $\Fp[t]$ and replacing the exponential size bounds on the coefficients by linear degree bounds. Indeed, the first place where the characteristic $p$ argument diverges is just after $\eqref{eq:bdef}$. So we pick up the argument at that point, using the same notation as before.

We have a polynomial $b'(\exes)$ defined similarly as in $\eqref{eq:bdef}$,
\begin{equation*}
  b'(\exes)=g(\exes)\Delta(f(y))b(\exes)\in \OF,
\end{equation*}
\noindent with degree $m\leq c_0n$ for some $c_0$ independent of $n$. Then by Lemma $\ref{lem:FptHom}$, there exist $g_1(t), \cdots, g_s(t)$ each of degree at most $\log m$ such that $b'(g_1(t),\cdots, g_s(t))\neq 0$. If $\epsilon:\OF\to \Fp[t]$ is the evaluation homomorphism with $\epsilon(x_i)=g_i(t)$, we have $\epsilon(g)\neq 0$, so $\epsilon$ extends to $\epsilon:R\to \Fp[t]$ and thus induces a homomorphism
\begin{equation*}
  \psi:R[y]\to \Fp[t][y]
\end{equation*}
satisfying $\psi(\widetilde{h})\neq 0$, $\varphi(f)\neq 0$, and $\psi(\Delta(f))\neq 0$.

We are now in a position to use Lemma $\ref{lem:FpCheb}$. We observe that
\begin{equation*}
\deg \epsilon(b')\leq c'n+m\log m\leq c_1n\log n
\end{equation*}
 for some constants $c', c_1$. Also, the discriminant of $\psi(f(y))$ has degree at most $c_2\log n$ for some constant $c_2$ since $f(y)$ is independent of $n$ and each $g_i(t)$ has degree at most $\log (c_0n)$. Thus by Lemma $\ref{lem:FpCheb}$, we can find $c>0$ independent of $n$ and an irreducible polynomial $F(t)$ of degree less than $c n\log n$ such that $F(t)$ does not divide $\psi(b')$ and $\psi(f(y))$ factors completely mod $F(t)$.

Put $\F=\Fp[t]/(F(t))$, let $\pi_F$ be the homomorphism $\pi_F:\Fp[t][y]\to \F[y]$ induced by $\Fp[t]\to \F$, and define $\pi=\pi_F \circ \psi:R[y]\to \F[y]$. Following the same arguments as in the characteristic 0 case, one can then show the desired residual finiteness growth bounds.
\end{proof}

\section{Lower Bound Preliminaries}

For the remainder of the paper we will assume $G$ is a (simple) Chevalley group of rank at least 2, and for this section we will in addition assume $G$ is simply connected.

Let $\Phi$ be the irreducible root system of rank $l\geq 2$ associated to $G$, and let $\mathfrak{g}(\C)$ be the corresponding Lie algebra, with Chevalley basis $\{e_\alpha: \alpha\in \Phi\}\cup \{h_1,\cdots, h_l\}$. For a field $\F$, let $\mathfrak{g}(\F)$ be the Lie algebra over $\F$ with the given Chevalley basis. For background on root systems and Chevalley bases, see \cite{H}.

 Fix an embedding of $G$ in $\SL_d$. Then there is a Lie algebra embedding $\mathfrak{g}(\C)$ into $\slie_d(\C)$ such that the action of $G$ on $\mathfrak{g}(\C)$ by conjugation, via matrix multiplication, is the same as the adjoint action.

 Let $K$ be the field $\Q$ or $\Fp(t)$ with ring of integers $\mathcal{O}=\Z$ or $\Fp[t]$, respectively. , We have $G(\mathcal{O})=G(K)\cap \SL_d(\mathcal{O})$. Fix a maximal ideal $\mathfrak{m}$ and $k\in \N$. Set $R=\mathcal{O}/\mathfrak{m}^k$ and let $p$ be the characteristic of the field $\F=\mathcal{O}/\mathfrak{m}$.

Let $G_i$ be the kernel of the projection $G(R)\to G(\mathcal{O}/\mathfrak{m}^i)$ for $1\leq i\leq k$ (note that $G_k=\{1\})$. We now use the $G_i$ to construct a graded Lie algebra (see \cite{BG} or \cite{LubSeg}, Chapter 7 for more details; these kernels were also used in \cite{BouQrf} to compute normal residual finiteness growth). Each $G_i/G_{i+1}$ is an elementary abelian $p$-group for $1\leq i\leq k-1$, so we can define an $\Fp$ vector space

  \begin{equation*}
    L(G_1)=\bigoplus_{i=1}^{k-1} G_i/G_{i+1}.
  \end{equation*}

 Defining the bracket on homogeneous elements to be

 \begin{equation*}
   [xG_{i+1}, yG_{j+1}]=(x,y)G_{i+j+1},
 \end{equation*}
 where $(x,y)$ is the group commutator, gives $L(G_1)$ the structure of a Lie algebra over $\Fp$. We have a Lie algebra isomorphism
 \begin{equation*}
   L(G_1)\cong \mathfrak{g}(\F)\otimes x\F[x]/(x^k)=\bigoplus_{i=1}^{k-1} x^i \mathfrak{g}(\F).
 \end{equation*}

\noindent Now let $H\leq G(R)$. Continuing to follow \cite{BG}, define
\begin{equation*}
L(H)=\bigoplus_{i=1}^{k-1} (H\cap G_i)G_{i+1}/G_{i+1}.
\end{equation*}
Then $L(H)$ is a graded Lie subalgebra of $L(G_1)$ and the codimension of $L(H)$ in $L(G_1)$, viewed as vector spaces over $\Fp$, is $\log_p[G_1:H\cap G_1]$.

Using the realization of $L(G_1)$ as $\mathfrak{g}(\F)\otimes x\F[x]/(x^k)$, write
\begin{equation*}
L(H)=\bigoplus_{i=1}^{k-1} x^i \mathfrak{h}_i,
\end{equation*}
 where
\begin{equation*}
\mathfrak{h}_i=(H\cap G_i)G_{i+1}/G_{i+1}\cong (H\cap G_i)/(H\cap G_{i+1})
\end{equation*}
is viewed as an $\Fp$-subspace of $\mathfrak{g}(\F)$. We note that $G_1H/G_1\cong H/(H\cap G_1)$ acts on each $\mathfrak{h}_i$ by conjugation and $[\mathfrak{h}_i, \mathfrak{h}_j]\subseteq \mathfrak{h}_{i+j}$ if $i+j<k.$

In addition, we have the following result when $H$ is a normal subgroup.
\begin{lemma}\label{lem:PerfectImage}
Assume $G(R)$ is perfect and $H\unlhd G(R)$. If $H\neq G(R)$, then $G_1H\neq G(R)$.
\end{lemma}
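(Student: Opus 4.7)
The plan is to argue by contradiction, exploiting the fact that $G_1$ is a finite $p$-group: if a proper normal subgroup $H$ satisfied $G_1H = G(R)$, then $G(R)$ would admit a nontrivial $p$-group quotient, which is incompatible with perfectness.

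First, I would suppose for contradiction that $G_1 H = G(R)$ while $H \neq G(R)$. By the second isomorphism theorem,
\[
G(R)/H \;=\; G_1 H / H \;\cong\; G_1/(G_1 \cap H),
\]
so $G(R)/H$ is realized as a quotient of $G_1$. In particular, whatever structural properties $G_1$ has are inherited by $G(R)/H$.

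Next, I would observe that $G_1$ is a finite $p$-group. This is essentially built into the graded Lie algebra set-up already in place: the filtration $G_1 \supset G_2 \supset \cdots \supset G_k = \{1\}$ has successive quotients $G_i/G_{i+1}$ that are elementary abelian $p$-groups, and the associated graded is $L(G_1) \cong \bigoplus_{i=1}^{k-1} x^i \mathfrak{g}(\F)$. Since $\F$ is a finite field of characteristic $p$, each $\mathfrak{g}(\F)$ is a finite $\F$-vector space, hence a finite $\F_p$-vector space. Consequently $|G_1| = \prod_{i=1}^{k-1} |G_i/G_{i+1}| = |\mathfrak{g}(\F)|^{k-1}$ is a power of $p$.

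Combining these two observations, $G(R)/H$ is a nontrivial finite $p$-group (nontrivial because $H \neq G(R)$). Every nontrivial finite $p$-group is nilpotent and therefore has nontrivial abelianization, so composing $G(R) \twoheadrightarrow G(R)/H$ with the abelianization map yields a nontrivial homomorphism from $G(R)$ onto a nontrivial abelian group. This directly contradicts $G(R) = [G(R),G(R)]$, completing the proof. There is no serious obstacle here: the only conceptual input is that $G_1$ is a $p$-group, which follows immediately from the Lie-algebra description already constructed just above the statement.
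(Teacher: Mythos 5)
Your proof is correct. It differs mildly from the paper's: the paper works directly with the filtration $G_1 \supset G_2 \supset \cdots \supset G_k = \{1\}$, arguing by induction that $G_jH = G(R)$ forces $G_{j+1}H = G(R)$ (since $G(R)/G_{j+1}H$ is then an image of the abelian group $G_j/G_{j+1}$, hence trivial by perfectness), and concluding $H = G_kH = G(R)$. That argument only uses that the successive filtration quotients are abelian. You instead package the same structure into the observation that $G_1$ is a finite $p$-group, apply the second isomorphism theorem to realize $G(R)/H$ as a quotient of $G_1$, and invoke nilpotency of $p$-groups to get a nontrivial abelian quotient of $G(R)$. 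The underlying mechanism is the same — $G_1$ is solvable and $G(R)$ is perfect — but the paper proves solvability ``by hand'' along the given filtration, whereas you appeal to the general theory of finite $p$-groups. Your version is a bit slicker; the paper's is more self-contained and ties directly to the graded structure it has already built. Both are perfectly valid.
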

\begin{proof}
  Recall that $R=\mathcal{O}/\mathfrak{m}^k$. For any $1\leq j\leq k-1$, there is a natural surjective homomorphism
  \[G_j/G_{j+1}\to G_jH/G_{j+1}H.\]
  If $G_jH=G(R)$, then $G(R)/G_{j+1}H$ is the image of the abelian group $G_j/G_{j+1}$. Since $G(R)$ is perfect, $G(R)/G_{j+1}H$ must be trivial, so $G_jH=G_{j+1}H$.

  In particular, if $G_1H=G(R)$, then the above argument implies $G_k H=G(R).$ Since $G_k=1$, we conclude that $H=G(R)$ if $G_1H=G(R)$.
\end{proof}

Set

\begin{equation*}
E(\F)=\bigoplus_{\alpha \in \Phi} \F e_\alpha.
\end{equation*}
We will write $E$ for $E(\F)$ when context makes clear what $\F$ is.

\begin{lemma}\label{lem:Codim}
Fix $\alpha\in \Phi$ and assume that $\alpha$ is a short root if $\Phi$ is of type $C_l, l\geq 2$.
Suppose $U,V\leq \mathfrak{g}(\F)$ satisfy $\F e_\alpha\not\subseteq [U,V]$. Then
\begin{equation*}
  \codim_E(U\cap E)+\codim_E(V\cap E)\geq 2 [\F:\Fp].
\end{equation*}
\end{lemma}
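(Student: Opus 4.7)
The plan is to reduce to a clean linear-algebra fact by restricting attention to the two-dimensional slice $W = \F e_\beta \oplus \F e_\gamma$ for a single carefully chosen pair with $\beta + \gamma = \alpha$, so that the bracket restricted to $W$ lands entirely inside $\F e_\alpha$. I would first choose $\beta,\gamma \in \Phi$ with $\beta+\gamma = \alpha$ and Chevalley structure constant $N_{\beta,\gamma}\neq 0$ in $\F$: for $\Phi$ not of type $C_l$, any root $\alpha$ admits such a decomposition with $N_{\beta,\gamma}=\pm 1$; in $C_l$ a long root $2\epsilon_i$ admits only the decomposition $(\epsilon_i+\epsilon_j)+(\epsilon_i-\epsilon_j)$ with $N_{\beta,\gamma}=\pm 2$, which vanishes in characteristic $2$, and the short-root hypothesis is exactly what rules this out. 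Set $A := U\cap W$ and $B := V\cap W$; the Grassmann identity applied to $W\subseteq E$ yields $\codim_W A \leq \codim_E(U\cap E)$ and similarly for $B$. Using $[e_\beta,e_\beta]=[e_\gamma,e_\gamma]=0$, writing $u=x_1 e_\beta + x_2 e_\gamma \in A$ and $v=y_1 e_\beta + y_2 e_\gamma \in B$ gives
\[
[u,v] \;=\; N_{\beta,\gamma}(x_1 y_2 - x_2 y_1)\, e_\alpha \;\in\; \F e_\alpha,
\]
so $[A,B]$ is entirely contained in $\F e_\alpha$.

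It therefore suffices to prove the following: if $A,B\subseteq \F^2$ are $\Fp$-subspaces with $\codim A + \codim B < 2n$, where $n=[\F:\Fp]$, then the $\Fp$-span of $\{\det(u,v) : u\in A,\, v\in B\}$ equals $\F$; this forces $\F e_\alpha \subseteq [A,B] \subseteq [U,V]$, contradicting the hypothesis. For the contrapositive, suppose a nonzero $\ell\in \Hom_{\Fp}(\F,\Fp)$ annihilates that span. Then $B_\ell(u,v) := \ell(x_1 y_2 - x_2 y_1)$ is an $\Fp$-bilinear form on $\F^2\times \F^2 \cong \Fp^{2n}\times \Fp^{2n}$ vanishing on $A\times B$. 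Writing $\ell = \Tr_{\F/\Fp}(c\,\cdot)$ for some $c\neq 0$, the pairing $(x,y)\mapsto \ell(xy)$ on $\F\times \F$ is nondegenerate of rank $n$, so the matrix of $B_\ell$ is block antidiagonal with two invertible $n\times n$ blocks and therefore has rank $2n$. The standard bound that a bilinear form of rank $r$ vanishing on $A\times B$ forces $\codim A + \codim B \geq r$ then delivers the contradiction.

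The only real obstacle is the choice of the pair $(\beta,\gamma)$: verifying the existence of roots summing to $\alpha$ with $N_{\beta,\gamma}\neq 0$ in $\F$ requires a brief case check through the irreducible root systems, and $C_l$ is the one where the short-root hypothesis has genuine content because the alternative decompositions of a long root all have structure constant $\pm 2$. Once the pair is in hand, everything reduces to the rank of a concrete $\Fp$-bilinear form built from the nondegenerate trace pairing on $\F$.
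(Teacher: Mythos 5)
Your proof is correct, and while it rests on the same two pillars as the paper's---the nondegeneracy of the trace pairing on $\F$ and the existence of $\beta,\gamma$ with $\beta+\gamma=\alpha$ and unit structure constant, with $C_l$ long roots excluded precisely because their only decompositions have $N_{\beta,\gamma}=\pm 2$---the execution is genuinely different. The paper never explicitly restricts to a slice: it rescales $U$ so that $[U,V]\cap\F e_\alpha$ lies in $\ker\Tr$, builds a dual pair of $\Fp$-bases $\{b_i\},\{b_i'\}$ of $\F$, forms the $2n$-element set $X=\{b_ie_{\alpha-\beta},\,b_i'e_\beta\}$ with the involution $\overline{b_ie_{\alpha-\beta}}=b_i'e_\beta$, and runs a maximality argument: pick $X_U\subseteq X$ maximal with $\langle X_U\rangle\cap U=0$, set $X_V=\{\overline w:w\in X\setminus X_U\}$, and derive a contradiction by exhibiting, from any supposed nonzero element of $\langle X_V\rangle\cap V$, a bracket landing in $\F e_\alpha$ with nonzero trace. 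You instead cut down to $W=\F e_\beta\oplus\F e_\gamma$ at the outset via $\codim_W(U\cap W)\le\codim_E(U\cap E)$, identify the bracket on $W$ with the determinant form into $\F e_\alpha$, and package the whole estimate as the standard fact that an $\Fp$-bilinear form of rank $r$ vanishing on $A\times B$ forces $\codim A+\codim B\ge r$; the nondegenerate trace pairing enters as the observation that each annihilating functional $\ell=\Tr(c\,\cdot)$ yields a block-antidiagonal form of full rank $2n$. Your route isolates the linear algebra more cleanly and avoids the rescaling and the combinatorial bookkeeping with $X_U,X_V$---the dual-basis construction in the paper is really doing the rank computation by hand. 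One small thing worth making explicit if you write this up: your appeal to ``a brief case check'' for the existence of $(\beta,\gamma)$ with $N_{\beta,\gamma}=\pm 1$ should be aware that $B_2\cong C_2$, so the long roots of $B_2$ also fail and must be counted among the excluded cases; the lemma's hypothesis ``$C_l$, $l\ge 2$'' is meant to cover this.
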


\begin{proof}
Write $[U,V]\cap \F e_\alpha=Ae_\alpha$, $A\leq \F$. Let Tr$: \F\to \Fp$ be the usual trace form. By assumption, $A$ is proper, so there exists some nonzero $a_0\in \F$ such that Tr$(a_0 a)=0$ for all $a\in A$. Replacing $U$ by $a_0 U$, we may assume Tr$(a)=0$ for all $a\in A$.

Let $B=\{b_1,\cdots, b_n\}$ be an $\F_p$-basis of $\F$, and let $B'=\{b_1', \cdots, b_n'\}$ be the dual basis of $B$ with respect to the trace, so that
\[\Tr(b_ib_j')=
\begin{cases}
  1 & \text{ if } i=j \\
  0 & \text{ if } i\neq j.
\end{cases}\]

We now construct subspaces of $E$ which intersect $U$ and $V$ trivially. Let $\beta\in \Phi$ such that $\alpha-\beta\in \Phi$ and $[e_{\alpha-\beta}, e_\beta]=e_\alpha$ (such a $\beta$ always exists because we exclude $\alpha$ from being a long root if $\Phi$ is of type $C_l$).

Set $X=\{b_ie_{\alpha-\beta}, b_i'e_\beta: 1\leq i\leq n\}$ and define an involution on $X$ by $\overline{b_i e_{\alpha-\beta}}=b_i'e_\beta.$ Then if $w_1\neq w_2\in X$, $[w_1, \overline{w_2}]=ae_\alpha \mbox{ with } \Tr(a)=0.$

 Let $X_U\subseteq X$ be maximal with respect to the property $\langle X_U\rangle \cap U=0$, and set $X_V=\{\overline{w}: w\in X\setminus X_U\}$. We show $\langle X_V\rangle \cap V=0$.

If not, there is some nonzero $v=\sum_{w\in X_V}s_w w\in V$, where each $s_w\in \Fp$. We now construct $u\in U$ such that the coefficient of $[u,v]\in \F e_\alpha$ has nonzero trace, contradicting $[U,V]\cap \F e_\alpha \subseteq Ae_\alpha$.

Some coefficient $s_{w_0}$ is nonzero, and we can assume $s_{w_0}=1$. Then $\overline{w_0}\not\in X_U$, so $u=\overline{w_0}+z\in U$ for some $z\in \langle X_U\rangle$. By the choices of $X_U$ and $X_V$, $[z, v], [\overline{w_0}, v-w_0]\in \F x_\alpha$ each have coefficients with trace 0, so for some $a\in \F$ with $\Tr(a)=0$,
\[[u,v]=[\overline{w_0}, w_0]+ax_\alpha=(\pm b_ib_i'+a) x_\alpha\in [U,V]\]
 for some $1\leq i\leq n$. But $\Tr(b_ib_i'+a)=\pm 1\neq 0$, yielding a contradiction, so $\langle X_V\rangle \cap V=0$.

Since $\langle X_U\rangle, \langle X_V\rangle \subseteq E$, we conclude that
\begin{equation*}
\codim_E(U\cap E)+\codim_E(V\cap E)\geq |X_U|+|X_V|=2[\F:\Fp].
\qedhere
\end{equation*}
\end{proof}

 For a commutative ring $R$, we write $\{x_\alpha(r): r\in R\}$ for the root subgroup of $G(R)$ corresponding to $\alpha\in \Phi$. The following result, proved in \cite{St} for the case $R$ is a Euclidean domain and in \cite{AS} when $R$ is semi-local, allows us to use a nice generating set of $G(R)$ for the rings $R$ we are interested in.

\begin{lemma}\label{lem:ElemChevalley}
  If $R$ is a Euclidean domain or a semi-local ring and $G$ is a simply connected Chevalley group, then $G(R)=\langle x_\alpha(r): \alpha\in \Phi, r\in R\rangle.$
\end{lemma}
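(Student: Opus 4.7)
The nontrivial direction is $G(R)\subseteq E(R)$, where $E(R):=\langle x_\alpha(r): \alpha\in \Phi, r\in R\rangle$; the reverse inclusion is immediate since every $x_\alpha(r)$ lies in $G(R)$ by definition of the Chevalley group scheme. The general strategy, going back to Steinberg, is a Bruhat-type reduction: given $g\in G(R)$, use elements of root subgroups to clear off-diagonal pieces, use the Weyl-group representatives $w_\alpha(u)=x_\alpha(u)x_{-\alpha}(-u^{-1})x_\alpha(u)$ (for $u\in R^\times$) to permute indices as needed, and reduce $g$ to a split maximal torus element; then show such torus elements are themselves in $E(R)$.

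For a Euclidean domain, I would first handle $G=\SL_n$ directly by Gauss elimination, using the Euclidean algorithm on the first column to pivot to a 1 in position $(1,1)$, clearing the rest of the first row and column by elementary transvections, and inducting on $n$ applied to the resulting $\SL_{n-1}$ block. For a general simply connected Chevalley group, a reduction through rank-$2$ root subsystems (type $A_2$, $B_2$, $G_2$) lets one mimic these column operations root by root; the final torus step uses the identity $h_\alpha(u)=w_\alpha(u)w_\alpha(1)^{-1}$, together with the fact that in the simply connected cover the full split torus $T(R)$ is generated by the $h_\alpha(u)$ as $\alpha$ ranges over simple roots and $u$ ranges over $R^\times$, each of which lies in $E(R)$ by construction.

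For a semi-local ring the Euclidean algorithm is unavailable, so I would follow Abe--Suzuki and replace it with the structural observation that for any pair $a,b\in R$ generating the unit ideal, one can find a unit combination enabling the analogue of a single Euclidean reduction step. This works because modulo the Jacobson radical $R$ is a finite product of fields in which column reduction is immediate, and the unit structure lifts back to $R$ by standard semi-local lifting arguments. The main obstacle in either setting is the torus containment $T(R)\subseteq E(R)$, which genuinely uses simple connectivity: for non-simply-connected forms the quotient $G(R)/E(R)$ can be nontrivial and is controlled by $K_1$- and $K_2$-type invariants, so the cleanness of the argument depends on the hypothesis that $G$ is the simply connected cover, where the torus is freely generated by the coroot expressions $h_\alpha(u)$.
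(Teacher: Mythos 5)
The paper does not actually prove this lemma; it is stated as known and supported by citations to Steinberg's \textit{Lectures on Chevalley groups} (for Euclidean domains) and Abe--Suzuki (for semi-local rings). Your sketch is a reasonable high-level outline of what those references do, and the individual ingredients you name — Gauss elimination via the Euclidean algorithm for $\SL_n$, Bruhat/Gauss-type reduction through rank-$2$ subsystems for general $\Phi$, the identities $w_\alpha(u)=x_\alpha(u)x_{-\alpha}(-u^{-1})x_\alpha(u)$ and $h_\alpha(u)=w_\alpha(u)w_\alpha(1)^{-1}$, the generation of the simply connected torus by the $h_\alpha(u)$, and the reduction modulo the Jacobson radical in the semi-local case — are the correct ones and correctly deployed. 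Two things are worth flagging as genuinely nontrivial and left compressed in your account: first, the reduction from a general simply connected $G$ over a Euclidean domain to the $\SL_2$/$\SL_n$ picture is more delicate than ``mimic column operations root by root'' suggests, since one must control torus and Weyl factors coming from the Bruhat decomposition over the fraction field while staying inside $G(R)$; second, in the semi-local case the lifting step from $R/\mathrm{rad}(R)$ (a product of fields) back to $R$ is exactly where the unit-rich structure of semi-local rings is used, and it is the heart of Abe--Suzuki's argument rather than a ``standard'' aside. Your remark that simple connectedness is what makes the torus containment clean — and that for non-simply-connected forms the obstruction $G(R)/E(R)$ is a $K$-theoretic invariant — is accurate and is precisely why the paper imposes that hypothesis.
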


\begin{lemma}\label{lem:ChevalleyPerfect}
  Let $G$ be a Chevalley group of type $\Phi$. Then $G(\Fp[t])$ is perfect unless $p=2$ and $\Phi$ is of type $B_2$ or $G_2$.
\end{lemma}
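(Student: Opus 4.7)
The plan is to reduce the problem, via Lemma \ref{lem:ElemChevalley}, to showing that every root-subgroup element $x_\alpha(r)$ (with $\alpha \in \Phi$, $r \in \Fp[t]$) lies in the commutator subgroup $[G(\Fp[t]), G(\Fp[t])]$: since $\Fp[t]$ is a Euclidean domain and $G$ is simply connected, those $x_\alpha(r)$ generate $G(\Fp[t])$. The main tool will be the Chevalley commutator formula
\[
[x_\alpha(s), x_\beta(t)] = \prod_{\substack{i,j > 0 \\ i\alpha + j\beta \in \Phi}} x_{i\alpha + j\beta}\bigl(N_{\alpha,\beta,i,j}\, s^i t^j\bigr)
\]
for linearly independent $\alpha, \beta \in \Phi$, with leading structure constant $N_{\alpha,\beta,1,1} = \pm 1$.

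For simply-laced $\Phi$ (types $A_l$ with $l \geq 2$, $D_l$, $E_6, E_7, E_8$), only the $(i,j) = (1,1)$ term ever appears in the commutator formula, and every root $\gamma$ decomposes as $\gamma = \alpha + \beta$ for linearly independent $\alpha, \beta \in \Phi$ (by rank $\geq 2$). Hence $x_\gamma(\pm r) = [x_\alpha(1), x_\beta(r)]$ is directly a commutator.

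For non-simply-laced $\Phi$ ($B_l, C_l, F_4, G_2$) I would argue in two stages. Step one: show that every long-root subgroup lies in $[G(\Fp[t]), G(\Fp[t])]$. Each long root is a sum of two short roots $\alpha + \beta$, and for such a pair the commutator formula contains only the leading term (since $i\alpha + j\beta \notin \Phi$ for $i+j \geq 3$ when both $\alpha$ and $\beta$ are short), giving $[x_\alpha(s), x_\beta(t)] = x_{\alpha+\beta}(\pm st)$. Step two: for each short root $\gamma$, exhibit linearly independent roots $\alpha, \beta \in \Phi$ with $\alpha + \beta = \gamma$ for which every higher-order term in $[x_\alpha(s), x_\beta(t)]$ lies in a long-root subgroup. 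Since long-root subgroups are already in the commutator subgroup by step one, isolating the leading term shows $x_\gamma(\pm st) \in [G, G]$; setting $s = r$, $t = 1$ then gives $x_\gamma(\pm r) \in [G, G]$ for all $r$.

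The hardest step is the case analysis in step two: verifying that a suitable pair $(\alpha, \beta)$ exists for every short root $\gamma$ outside the excluded cases. The obstruction arises when every commutator producing $x_\gamma$ as leading term also produces a secondary term in another short-root subgroup, with a structure constant divisible by $p$. For the classical types $B_l$ and $C_l$ with $l \geq 3$ and for $F_4$, the extra orthogonal directions available in rank $\geq 3$ allow rerouting every secondary term into long-root subgroups. For $G_2$ at $p \neq 2$, the secondary structure constants $\pm 2$ remain units in $\Fp$, so the isolation argument still goes through. Only for $\Phi \in \{B_2, G_2\}$ with $p = 2$ does every potential commutator produce an obstructing secondary short-root term with vanishing constant, which is precisely the excluded range.
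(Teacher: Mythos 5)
The paper proves this by citing Chapter~11 of Carter's \emph{Simple Groups of Lie Type}, where perfection of universal Chevalley groups over fields is established via the commutator formula, and observing that the same computations carry over to the ring $\Fp[t]$; your overall strategy is therefore the right one. However, there is a concrete error in the execution. You assert that $N_{\alpha,\beta,1,1} = \pm 1$ for all $\alpha,\beta$, but the leading structure constant $N_{\alpha,\beta}$ equals $\pm(r+1)$ where $r$ is the length of the $\alpha$-string through $\beta$ in the direction of $-\alpha$, and in non-simply-laced types this can be $\pm 2$ or $\pm 3$. Your ``step one'' then collapses: when a long root $\gamma$ is written as a sum $\alpha+\beta$ of two short roots, the commutator $[x_\alpha(s),x_\beta(t)]$ does indeed have a single term $x_{\alpha+\beta}$, but the constant is $\pm 2$ (for $B_l$, $C_l$, $F_4$) or $\pm 3$ (for $G_2$) --- never $\pm 1$. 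For example in $C_l$, $N_{\epsilon_1+\epsilon_2,\ \epsilon_1-\epsilon_2}=\pm 2$, and in $B_2$ this is exactly why the long-root subgroup cannot be produced in characteristic $2$. So as written, step one fails for every non-simply-laced $\Phi$ in small characteristic, not just the two excluded cases.

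The fix requires choosing the order of the bootstrap by type. In $B_l$ ($l\geq 3$), $F_4$, and $G_2$, each long root is a sum of two \emph{long} roots (e.g.\ $\epsilon_1+\epsilon_2=(\epsilon_1-\epsilon_3)+(\epsilon_2+\epsilon_3)$), whose commutator has the single term with constant $\pm 1$; one then attacks the short roots via commutators whose extra terms lie in long-root subgroups already handled. In $C_l$ ($l\geq 3$), no long root is a sum of two long roots, so the order must be reversed: short roots come first (e.g.\ $\epsilon_1+\epsilon_2=(\epsilon_1-\epsilon_3)+(\epsilon_2+\epsilon_3)$, both short, constant $\pm 1$, no higher terms), and then the long root $2\epsilon_1$ is extracted from $[x_{\epsilon_1-\epsilon_2}(s), x_{2\epsilon_2}(t)]=x_{\epsilon_1+\epsilon_2}(\pm st)\,x_{2\epsilon_1}(\pm s^2t)$ after peeling off the already-known short factor. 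Finally, your closing remark that at $p\neq 2$ the constants $\pm 2$ are units does not settle $G_2$ at $p=3$: there the problematic constants are $\pm 3$, which vanish, so a separate verification (choosing commutators avoiding $\pm 3$) is needed to see that $G_2(\F_3[t])$ is still perfect, as the lemma claims.
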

\begin{proof}
  The statement is proved in chapter 11 of \cite{Carter} for Chevalley groups over fields, but the same arguments apply to the polynomial ring $\Fp[t]$.
\end{proof}

If $g\in \mathcal{O}$, we write $G(\mathcal{O}, g)=\ker (G(\mathcal{O})\to G(\mathcal{O}/g)),$ and we denote the gcd of $\pi$ and $g$ as $(\pi, g)$. We call $G(\mathcal{O}, g)$ a principal congruence subgroup; any subgroup of $G(\mathcal{O})$ containing a principal congruence subgroup is called a congruence subgroup.

If the rank of $G$ is at least 2, then $G(\mathcal{O})$ has the congruence subgroup property: every finite index subgroup of $G(\mathcal{O})$ is a congruence subgroup (see Chapter 9 of \cite{PlatRap} for details). We note that it is necessary that $G$ be simply connected for this to be true.

Using the congruence subgroup property we will be able to reduce to the case of considering principal congruence subgroups. The next two statements will help us work with their images in $G(R)$.

\begin{lemma}\label{lem:CongruenceProjection}
   Let $R=\mathcal{O}/\pi^k$ for some irreducible $\pi\in \mathcal{O}$ and set $\Delta=G(\mathcal{O}, g)$ for some $g\in \mathcal{O}$. Let $\overline{\Delta}$ be the image of $\Delta$ in $G(R)$.

\begin{enumerate}[(i)]
  \item If $(\pi, g)=1$, then $\overline{\Delta}=G(R)$.
  \item     If $(\pi^k, g)=\pi^s$ with $s<k$, then $E\subseteq (\overline{\Delta}\cap G_i)G_{i+1}/G_{i+1}$ for $s\leq i\leq k-1$.
\end{enumerate}

\end{lemma}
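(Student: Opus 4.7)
The plan is to handle both parts by lifting elementary root subgroup elements $x_\alpha(r)$ from $G(R)$ back to $\Delta=G(\mathcal{O},g)$ via the Chinese Remainder Theorem in the PID $\mathcal{O}$. Since $R=\mathcal{O}/\pi^k$ is local (hence semi-local), Lemma~\ref{lem:ElemChevalley} applies and $G(R)$ is generated by the root subgroup elements $\{x_\alpha(r):\alpha\in\Phi,\,r\in R\}$. Thus for (i) it suffices to show each such generator lies in $\overline{\Delta}$, and (ii) amounts to realizing every $c\,e_\alpha$ (for $c\in\F$, $\alpha\in\Phi$) as the image in $G_i/G_{i+1}$ of some element of $\Delta$.

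For part (i), the hypothesis $(\pi,g)=1$ gives $(\pi^k,g)=1$, so given $r\in R$ with lift $\tilde{r}\in\mathcal{O}$ the CRT produces $r'\in\mathcal{O}$ satisfying $r'\equiv\tilde{r}\pmod{\pi^k}$ and $r'\equiv 0\pmod{g}$. Then $x_\alpha(r')\in\Delta$ and its image in $G(R)$ equals $x_\alpha(r)$, finishing this part.

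For part (ii), factor $g=\pi^s h$ with $(\pi,h)=1$; this is valid because the hypothesis $(\pi^k,g)=\pi^s$ with $s<k$ forces $v_\pi(g)=s$. Fix $\alpha\in\Phi$, $s\leq i\leq k-1$, and $c\in\F$, and lift $c$ to $\tilde{c}\in\mathcal{O}$. Applying the CRT to the coprime ideals $(\pi)$ and $(h)$ yields $u\in\mathcal{O}$ with $u\equiv\tilde{c}\pmod{\pi}$ and $u\equiv 0\pmod{h}$, so $g=\pi^s h$ divides $\pi^i u$ (using $i\geq s$ and $h\mid u$) and therefore $x_\alpha(\pi^i u)\in\Delta$. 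Since $x_\alpha(\pi^i u)\in G(\mathcal{O},\pi^i)$ as well, its image in $G(R)$ lies in $\overline{\Delta}\cap G_i$, and under the identification $G_i/G_{i+1}\cong x^i\mathfrak{g}(\F)$ recalled in the excerpt this image maps to $\bar u\,e_\alpha=c\,e_\alpha$. Letting $c$ and $\alpha$ vary gives $E=\bigoplus_{\alpha\in\Phi}\F e_\alpha\subseteq\mathfrak{h}_i$.

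The substantive content is modest: both parts reduce to using the CRT to control congruences modulo $g$ and modulo a power of $\pi$ simultaneously. The one step I would want to write out most carefully, though I expect no genuine obstacle, is the identification of the class $x_\alpha(\pi^i u)G_{i+1}$ with $\bar u\,e_\alpha\in\mathfrak{g}(\F)$: this is exactly where the geometric Chevalley basis meets the abstract graded Lie algebra $L(G_1)$, and the argument for (ii) is only as sharp as this identification is explicit.
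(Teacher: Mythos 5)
Your proof is correct and follows essentially the same route as the paper: both parts come down to solving simultaneous congruences modulo $\pi^k$ and modulo $g$ (the paper writes this via Bézout, $h_1\pi^k+h_2g=f$, which in the PID $\mathcal{O}$ is the same mechanism as your CRT formulation), then applying Lemma~\ref{lem:ElemChevalley} for (i) and reading off the image of $x_\alpha(\pi^i u)$ in $G_i/G_{i+1}\cong\mathfrak{g}(\F)$ for (ii). The paper likewise leaves the identification $x_\alpha(\pi^i u)G_{i+1}\leftrightarrow\bar u\,e_\alpha$ implicit, so your cautionary note does not mark a gap relative to the source.
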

\begin{proof}
  First assume $(\pi, g)=1$. Then for any $f\in \mathcal{O}$, there exist $h_1,h_2\in \mathcal{O}$ such that $h_1\pi^k+h_2g=f$. Thus if $\alpha\in \Phi$,
  \begin{equation*}
  x_\alpha(h_2 g)=x_\alpha(f)x_\alpha(-h_1 \pi^k)\in \Delta,
  \end{equation*}
  so $x_\alpha(f \mod \pi^k)\in \overline{\Delta}.$ Lemma $\ref{lem:ElemChevalley}$ then implies $\overline{\Delta}=G(R)$.

  Now assume $(\pi^k, g)=\pi^s$ with $s<k$. Then using similar reasoning as above, for any $\alpha\in \Phi$ and any $f\in \mathcal{O}$, $x_\alpha(\pi^s f \mod \pi^k)\in \overline{\Delta}$. Hence for $s\leq i\leq k-1$, \begin{equation*}
  \{x_\alpha(\pi^i f \mod \pi^k):\alpha\in \Phi, f\in \mathcal{O}\}\subseteq \overline{\Delta}\cap G_i,
  \end{equation*}
   proving the lemma.
\end{proof}

 We note that the statement in $(ii)$ of the above lemma is not optimal. In fact $(\overline{\Delta}\cap G_i)G_{i+1}/G_{i+1}=G(R)$ in this case, but we only require the weaker statement to prove the following corollary.

\begin{corollary}\label{cor:FiniteIndexCodim}
  With the same setup as in Lemma $\ref{lem:CongruenceProjection}$, let $H\leq \overline{\Delta}$ and fix $\alpha\in \Phi$, a short root if $\Phi$ is type $C_l$. Assume $(\pi^k, g)=\pi^s$ and $\F e_\alpha \not\subseteq \mathfrak{h_j}$ for some $1\leq j\leq k-1$ such that $s<j/2$.  If $s=0$, then
  \[\codim_{L(\overline{\Delta})} L(H)\geq [\F:\Fp](j-1).\]
   If $s\geq 1$, then
  \begin{equation*}
    \codim_{L(\overline{\Delta})} L(H)\geq [\F:\Fp](j-2s+1).
  \end{equation*}
\end{corollary}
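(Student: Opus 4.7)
The plan is to combine the bracket estimate of Lemma \ref{lem:Codim} with the congruence information of Lemma \ref{lem:CongruenceProjection} via a simple pairing argument on the graded pieces. Writing
\begin{equation*}
L(\overline{\Delta}) = \bigoplus_{i=1}^{k-1} x^i \mathfrak{d}_i, \qquad L(H) = \bigoplus_{i=1}^{k-1} x^i \mathfrak{h}_i,
\end{equation*}
where $\mathfrak{d}_i = (\overline{\Delta} \cap G_i)G_{i+1}/G_{i+1}$ and $\mathfrak{h}_i \subseteq \mathfrak{d}_i \subseteq \mathfrak{g}(\F)$, the goal is to bound $\codim_{L(\overline{\Delta})} L(H) = \sum_i \codim_{\mathfrak{d}_i}(\mathfrak{h}_i)$ from below in terms of $j$ and $s$.

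First I would note that for any $1 \leq i \leq j-1$ the graded bracket satisfies $[\mathfrak{h}_i, \mathfrak{h}_{j-i}] \subseteq \mathfrak{h}_j$, so the hypothesis $\F e_\alpha \not\subseteq \mathfrak{h}_j$ gives $\F e_\alpha \not\subseteq [\mathfrak{h}_i, \mathfrak{h}_{j-i}]$. Lemma \ref{lem:Codim}, applied with $U = \mathfrak{h}_i$ and $V = \mathfrak{h}_{j-i}$, then yields
\begin{equation*}
\codim_E(\mathfrak{h}_i \cap E) + \codim_E(\mathfrak{h}_{j-i} \cap E) \geq 2[\F:\Fp].
\end{equation*}
To transfer these $E$-codimension estimates to $\mathfrak{d}_i$-codimensions I would invoke Lemma \ref{lem:CongruenceProjection}: if $s=0$ then part (i) gives $\overline{\Delta} = G(R)$, so $E \subseteq \mathfrak{d}_i$ for every $1 \leq i \leq k-1$; if $s \geq 1$ then part (ii) gives $E \subseteq \mathfrak{d}_i$ for $s \leq i \leq k-1$. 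Whenever $E \subseteq \mathfrak{d}_i$, the inclusion $\mathfrak{h}_i + E \subseteq \mathfrak{d}_i$ yields
\begin{equation*}
\codim_{\mathfrak{d}_i}(\mathfrak{h}_i) \geq \dim_{\Fp} E - \dim_{\Fp}(\mathfrak{h}_i \cap E) = \codim_E(\mathfrak{h}_i \cap E).
\end{equation*}

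The remaining step is to sum. Since $i \mapsto j-i$ is an involution on any interval symmetric about $j/2$, summing the pair inequality over the usable values of $i$ doubles each individual $\codim_E(\mathfrak{h}_i \cap E)$. In the case $s = 0$, the condition that both $i$ and $j-i$ lie in $[1, k-1]$ forces $i \in \{1, \ldots, j-1\}$, giving $j-1$ indices, so
\begin{equation*}
2\sum_{i=1}^{j-1}\codim_E(\mathfrak{h}_i \cap E) = \sum_{i=1}^{j-1}\bigl[\codim_E(\mathfrak{h}_i \cap E) + \codim_E(\mathfrak{h}_{j-i} \cap E)\bigr] \geq 2(j-1)[\F:\Fp],
\end{equation*}
which combined with the previous step yields $\codim_{L(\overline{\Delta})} L(H) \geq (j-1)[\F:\Fp]$. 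In the case $s \geq 1$ the extra requirement $i, j-i \geq s$ restricts us to $i \in \{s, s+1, \ldots, j-s\}$, which is nonempty precisely because $s < j/2$ and contains $j-2s+1$ indices, giving the stated bound $(j-2s+1)[\F:\Fp]$.

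The proof is essentially a routine bookkeeping exercise once Lemmas \ref{lem:Codim} and \ref{lem:CongruenceProjection} are in hand. The only real subtleties are verifying that all codimensions are computed over $\Fp$ (matching the convention of Lemma \ref{lem:Codim}) rather than over $\F$, and correctly identifying the usable index range in each of the two cases, which is precisely what produces the slightly different bounds $(j-1)$ versus $(j-2s+1)$.
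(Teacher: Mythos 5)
Your proof is correct and follows essentially the same route as the paper: apply Lemma \ref{lem:Codim} to the pairs $(\mathfrak{h}_i, \mathfrak{h}_{j-i})$, use Lemma \ref{lem:CongruenceProjection} to guarantee $E\subseteq \mathfrak{d}_i$ on the relevant index range, and sum via the involution $i\mapsto j-i$. The only difference is that you make explicit the step $\codim_{\mathfrak{d}_i}(\mathfrak{h}_i)\geq \codim_E(\mathfrak{h}_i\cap E)$, which the paper leaves implicit.
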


\begin{proof}
  Since $\F e_\alpha\not\subseteq\mathfrak{h}_j$ and $[\mathfrak{h}_i, \mathfrak{h}_{j-i}]\subseteq \mathfrak{h}_j$ for $1\leq i\leq j-1$, we have
  $\F e_\alpha\not\subseteq [\mathfrak{h}_i, \mathfrak{h}_{j-i}].$ Put $\mathfrak{d}_i=(\overline{\Delta}\cap G_i)G_{i+1}/G_{i+1}$.

  If $s=0$, then $\overline{\Delta}=G(R)$ by Lemma $\ref{lem:CongruenceProjection}$, so $E\subseteq \mathfrak{d}_i$ for $1\leq i\leq k-1$. Then  Lemma $\ref{lem:Codim}$ implies
  \begin{equation*}
  \codim_{\mathfrak{d}_i} (\mathfrak{h}_i)+\codim_{\mathfrak{d}_j} (\mathfrak{h}_{j-i})\geq 2[\F:\Fp]
  \end{equation*}
  for $1\leq i\leq j-1$. Hence
  \begin{equation*}
  \codim_{L(\overline{\Delta})} L(H)\geq [\F:\Fp](j-1).
  \end{equation*}

  If $s\geq 1$, Lemma $\ref{lem:CongruenceProjection}$ gives that $E\subseteq \mathfrak{d}_i$ for $s\leq i\leq k-1$. There are $j-2s+1$ integers in the interval $[s, j-s]$, so the previous reasoning yields the desired inequality.
\end{proof}

\begin{lemma}\label{lem:InvariantIdeal}
  Let $\F$ be a finite field of characteristic $p$ and $G$ a simply connected Chevalley group. For all but finitely many $p$, the adjoint action of $G(\F)$ on $\mathfrak{g}(\F)$ is irreducible. The exceptions are given in Table $\ref{tab:InvariantIdeals}$, along with the largest possible dimension of a proper ideal $I\subseteq \mathfrak{g}(\F)$ invariant under the action of $G(\F)$ in those cases. If $G$ is type $B_2$ and $p=2$, then any invariant ideal $I$ is either the center or contains $\F e_\alpha$ for all short roots $\alpha$.
\end{lemma}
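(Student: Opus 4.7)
The plan is to exploit the root-space decomposition coming from a split maximal torus, combined with the explicit Chevalley commutation relations, and then handle the small-prime exceptions by direct case analysis. Let $T(\F)\leq G(\F)$ be a split maximal torus. Because $T(\F)$ acts on $\mathfrak{g}(\F)$ through the characters $\alpha\colon T(\F)\to \F^{\times}$ associated with the roots, for all but finitely many pairs $(\Phi, |\F|)$ the $T(\F)$-characters separate the roots, and so every $G(\F)$-invariant subspace $I$ of $\mathfrak{g}(\F)$ splits as $I=(I\cap \mathfrak{h})\oplus \bigoplus_{\alpha\in S_I}\F e_\alpha$ for some $S_I\subseteq \Phi$. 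The finitely many small-field exceptions can be reduced to this case by descending to the $\Fp$-form of $\mathfrak{g}$ and extending scalars.

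The engine of the argument is to use root unipotents $x_\alpha(r)$ to transport root vectors. The identity
\begin{equation*}
\mathrm{Ad}(x_\alpha(r))e_\beta = e_\beta + rN_{\alpha,\beta}e_{\alpha+\beta} + (\text{terms in } e_{\beta+k\alpha} \text{ for } k\geq 2)
\end{equation*}
implies that if $e_\beta\in I$ and $N_{\alpha,\beta}\not\equiv 0 \pmod p$ with $\alpha+\beta\in \Phi$, then $e_{\alpha+\beta}\in I$. Since the Weyl group is generated by reflections realized inside $G(\F)$ and acts transitively on each length class of roots, iterating this transport shows that $I$ contains the entire Weyl orbit of any root vector it meets, provided no structure constant along the connecting chain vanishes mod $p$. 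For simply laced $\Phi$ all the $N_{\alpha,\beta}$ are $\pm 1$, so $I\supseteq E$ as soon as $I$ contains a single root vector; for $B_l, C_l, F_4$ the long/short transitions involve $\pm 2$ and produce a split at $p=2$; and for $G_2$ the additional coefficient $\pm 3$ accounts for the $p=3$ exception.

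For the Cartan contribution, the relations $[h,e_\alpha]=\alpha(h)e_\alpha$ and $[e_\alpha,e_{-\alpha}]=h_\alpha$ force $I\cap \mathfrak{h}$ to contain $h_\alpha$ whenever $e_\alpha\in I$ and to consist of elements annihilated by every root $\alpha$ with $e_\alpha\notin I$. These two constraints determine $I\cap \mathfrak{h}$ completely and show that the only genuinely exceptional Cartan contribution is a subspace of the center $Z(\mathfrak{g}(\F))$, which is nontrivial exactly when $p$ divides the order of the fundamental group of $G$. Combining the $E$-part and the Cartan part, in each exceptional pair $(\Phi,p)$ the largest proper invariant ideal is the direct sum of the center and the $\F$-span of a single Weyl orbit of roots; matching this against Hogeweij's classification of modular Chevalley Lie algebras produces the dimensions tabulated in Table~\ref{tab:InvariantIdeals}.

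For the final $B_2$, $p=2$ statement, I would argue directly from the $B_2$ root structure. Short-plus-short brackets produce a long root vector with coefficient $\pm 2$, vanishing in characteristic $2$, so the short-root subspace is itself a $G(\F)$-invariant subspace not containing any long root vector. Conversely, if $I$ contains a long root vector $e_\alpha$, then acting by a root unipotent $x_\beta(r)$ with $\beta$ short and $\alpha+\beta$ short uses the structure constant $N_{\beta,\alpha}=\pm 1$ and produces a nonzero short root vector in $I$; Weyl-orbit transitivity among short roots then forces $\F e_\gamma\subseteq I$ for every short $\gamma$. Hence any invariant $I$ either lies in the center or contains the full short-root subspace. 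The main obstacle throughout is the bookkeeping for the small primes, and especially this $B_2$, $p=2$ case, where the naive root-transport argument breaks down precisely at the $\pm 2$ structure constants and one must reroute through a short-root unipotent acting on a long root vector to recover the short-root subspace.
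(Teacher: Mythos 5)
The paper does not prove this lemma: it cites Theorem~2.1 of Hogeweij~\cite{HOG} and stops, so any argument you supply is by definition a different route from the paper. Your outline reproduces the standard internal argument behind that theorem, and the main steps are sound: for $p$ (hence $|\F|$) large, the torus characters separate the roots and any invariant subspace splits into weight spaces; root-unipotent transport together with the Weyl group, which is realized inside $G(\F)$ and transitive on each root-length class, forces $I$ to absorb the full Weyl orbit of any root vector it meets unless some connecting structure constant vanishes mod $p$; the $\pm 2$ and $\pm 3$ coefficients in the non-simply-laced cases explain the $p=2$ and $p=3$ exceptions; and the central Cartan contribution appears exactly when $p$ divides the determinant of the Cartan matrix. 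Two caveats. First, in your $B_2$, $p=2$ step you gloss over a point: $x_\beta(r)\cdot e_\alpha - e_\alpha$ with $\alpha$ long and $\beta$ short contributes both $r\,e_{\alpha+\beta}$ and $r^2 e_{\alpha+2\beta}$ to $I$ (both structure constants are $\pm 1$ here), so you must still isolate the $e_{\alpha+\beta}$ term --- either by varying $r$ when $|\F|\geq 3$, or by first using the Weyl orbit of $e_\alpha$ to place $e_{\alpha+2\beta}$ (another long root vector) in $I$ when $\F=\F_2$. Second, the reduction of the small-field exceptions by ``descending to the $\Fp$-form and extending scalars'' is asserted but not shown and is not immediate, since $G(\F)$ and $G(\F')$ are different groups; and you ultimately invoke Hogeweij's classification to read off the dimensions in Table~\ref{tab:InvariantIdeals}, which is circular if the goal is a self-contained proof. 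As a high-level account of why the cited theorem holds, your sketch is reasonable and more informative than the paper's one-line citation.
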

\begin{proof}
  See Theorem 2.1 in \cite{HOG}.
\end{proof}

\begin{table}[h]
\centering
\begin{tabular}{|c|c|c|c|}
  \hline
  % after \\: \hline or \cline{col1-col2} \cline{col3-col4} ...
  $\Phi$ & $p$ &  max $\dim(I)$ & $\min \codim(I)$\\ \hline
  $A_l, l\geq 2$ &  $p|(l+1)$ & $1$ &$l^2+2l-1$ \\
  $B_l, l\geq 3$ &  $2$ & $2l+2$ & $2l^2-l-2$\\
  $C_l, l\geq 2$ &  $2$ & $2l^2-l$ & $2l$\\
  $D_l, l\geq 4$ &  $2$ & $2$ & $2l^2-l-2$\\
  $G_2$          & $3$     & $7$ & $7$ \\
  $F_4$          & $2$    & $26$ & $26$ \\
  $E_6$          & $3$    & $1$ & $77$ \\
  $E_7$          &  $2$  & $1$ & $132$ \\
  \hline
\end{tabular}
  \caption{}
  \label{tab:InvariantIdeals}
\end{table}

The final lemma of this section enables us to apply Lemma $\ref{lem:InvariantIdeal}$ to the situation where we consider $\mathfrak{g}(\F)$ as a vector space over $\Fp$. Because the proof is technical, we postpone it to the end of the paper.

\begin{lemma}\label{lem:InvariantSubspace}
  Let $\F$ be a finite field of characteristic $p$ such that $|\F|\geq 4$, and let $G$ be a simply connected Chevalley group with root system $\Phi$. Let $V$ be a proper $\Fp$-subspace of $\mathfrak{g}(\F)$. If $V$ is $G(\F)$-invariant, then $\F V$, the $\F$ subspace spanned by $V$, is a proper ideal of $\mathfrak{g}(\F)$ which is invariant under the action of $G(\F)$.
\end{lemma}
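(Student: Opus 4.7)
The plan is to prove three things: (a) $\F V$ is a Lie ideal of $\mathfrak{g}(\F)$; (b) $\F V$ is $G(\F)$-invariant; and (c) $\F V \neq \mathfrak{g}(\F)$.

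For (a), I would fix $\alpha \in \Phi$ and $v \in V$, and use the Chevalley expansion
\[
 x_\alpha(t) \cdot v \;=\; v + \sum_{k=1}^{N} t^{k}\, e_\alpha^{(k)} v \in V \qquad (t \in \F),
\]
where $e_\alpha^{(k)}$ is the $k$-th divided power of $\operatorname{ad}(e_\alpha)$ and $N$ is a uniform bound on its nilpotency length (one can take $N \leq 3$, with the maximum attained in $G_2$). The hypothesis $|\F| \geq 4 \geq N+1$ gives enough distinct nonzero values of $t$ at which to evaluate, and inverting the resulting Vandermonde matrix with coefficients in $\F$ expresses each $e_\alpha^{(k)} v$ as an $\F$-linear combination of elements of $V$; in particular $[e_\alpha, v] = e_\alpha^{(1)} v \in \F V$. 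So $[e_\alpha, \F V] \subseteq \F V$ for all $\alpha \in \Phi$, and since $\{e_\alpha : \alpha \in \Phi\}$ generates $\mathfrak{g}(\F)$ as a Lie algebra (via $[e_\alpha, e_{-\alpha}] = h_\alpha$), $\F V$ is an ideal. Point (b) is automatic: $G(\F)$ acts $\F$-linearly and stabilizes $V$, so it stabilizes $\F V$.

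For (c), the plan is to show that $V$ is itself an $\F$-subspace, so that $V = \F V$ is proper as claimed. Writing $v \in V$ in its $T(\F)$-weight decomposition $v = v_0 + \sum_\beta v_\beta$, the torus identity
\[
 h_\alpha(t) \cdot v \;=\; v_0 + \sum_\beta t^{\langle \beta, \alpha^\vee \rangle} v_\beta \in V \qquad (t \in \F^\times),
\]
has exponents in the bounded integer range $\{-3, \dots, 3\}$, so a Vandermonde inversion parallel to (a) forces each weight component $v_\beta$ (and $v_0$) into $\F V$; combining torus scalings from several simple roots should then exhibit each $V \cap \F e_\beta$ and, via root subgroup actions on the Cartan, $V \cap \mathfrak{h}(\F)$, as $\F$-subspaces. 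The principal obstacle here is this final step: a single torus character $t \mapsto t^{n}$ scales $V \cap \F e_\beta$ only by the subgroup $(\F^\times)^{n}$, which is generally proper in $\F^\times$, so one must combine characters from several distinct roots to hit every element of $\F^\times$; the hypothesis $|\F| \geq 4$ is needed both for the Vandermonde inversions and for these combinations, with a small number of exceptional cases (tied to those of Lemma~\ref{lem:InvariantIdeal}) likely requiring separate treatment.
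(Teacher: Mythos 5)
Your parts (a) and (b) coincide with the paper's opening move: the Vandermonde inversion over three distinct nonzero values of $t \in \F$ (possible since $|\F| \geq 4$) shows $[e_\alpha, \F V] \subseteq \F V$ for every root $\alpha$, hence $\F V$ is an ideal, and $G(\F)$-invariance of $\F V$ is immediate from $\F$-linearity. That much is correct and matches the paper.

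Part (c) is where essentially all the content of the lemma lives, and here your sketch is both incomplete (as you acknowledge) and headed in a direction with genuine obstructions. You aim to prove the stronger statement that $V$ is itself an $\F$-subspace by inverting a torus Vandermonde in the characters $t \mapsto t^{\langle \beta, \alpha^\vee\rangle}$. Three problems. First, the pairings $\langle \beta, \alpha^\vee \rangle$ range over $\{-3,\dots,3\}$ (seven distinct exponents in $G_2$), so a torus Vandermonde would generically require $|\F^\times| \geq 7$, not the hypothesis $|\F| \geq 4$. Second, even where it applies, the inversion produces only $v_\beta \in \F V$, not $v_\beta \in V$, so it does not split $V$ along weight spaces. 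Third, and most seriously, as you note, the characters $t \mapsto t^{n}$ only scale $V \cap \F e_\beta$ by the subgroup $(\F^\times)^{n}$, and combining several characters still yields only $(\F^\times)^{\gcd(n_1,\dots,n_k)}$; if that gcd shares a nontrivial factor with $|\F^\times|$, the argument stalls. You flag the obstacle but offer no resolution, and none is apparent from torus data alone.

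The paper's route is quite different. It does not attempt to prove $V$ is an $\F$-subspace; it shows directly that $\F V$ is proper, arguing by contradiction in three cases. When $p \neq 2$ and $\Phi \neq G_2$, the identity $x_\alpha(s)\cdot v - x_\alpha(-s)\cdot v = 2s[e_\alpha, v] \in V$ gives $\F\,[\mathfrak{g}(\F), V] \subseteq V$, hence $[\mathfrak{g}(\F), \F V] \subseteq V \subsetneq \mathfrak{g}(\F)$; since $[\mathfrak{g}(\F),\mathfrak{g}(\F)] = \mathfrak{g}(\F)$, this forces $\F V \neq \mathfrak{g}(\F)$. For the remaining cases ($p$ arbitrary with $\Phi \neq C_l$, and $p=2$ with $\Phi = C_l$) it assumes $\F V = \mathfrak{g}(\F)$ and derives $V = \mathfrak{g}(\F)$ by a concrete calculation: an auxiliary lemma on long roots produces a triple bracket that isolates a single coefficient $s_\alpha e_{-\alpha}$ inside $V$, Weyl group transitivity then gives all long root vectors, root subgroup actions give the short root vectors, and in the $C_l$, $p=2$ case the final step uses $\F^2 = \F$. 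This case analysis — keyed on $p$, on the root system type, and on root lengths — is what drives the proof, and it has no analogue in your torus-character sketch.
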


\section{Lower bounds in characteristic 0}
We continue with the notation of the previous section, with $G$ remaining a simply connected Chevalley group with a fixed embedding into $\SL_d$. Fix $\alpha\in \Phi$, a short root if $G$ is type $C_l$. We first provide lower bounds for the normal and non-normal residual finiteness growth of $G(\Z)$. The values of $\dim(G)$ and $a(G)$ can be found in Table $\ref{tab:constants}$.

\begin{lemma}\label{lem:Char0BaseCase}
  Let $R=\Z/p^k$ for a prime $p$, $k\geq 1$. Let $\Delta=G(\Z,N)$ and $\overline{\Delta}$ be the image of $\Delta$ in $G(R)$. Assume $(p^k, N)=p^s$. Let $r\geq N$ be sufficiently large and set
  \begin{align*}
  L_r&=(\lcm(1,2,\cdots, r))^{3(\dim(G)+s)}, \\
  M_r&=x_\alpha(L_r \mod p^k).
  \end{align*}
   If $M_r\not\in H\leq \overline{\Delta}$, then $[\overline{\Delta}:H]\geq \dfrac{1}{2}r^{a(G)}$. If in addition $H\unlhd \overline{\Delta}$, then $[\overline{\Delta}:H]\geq \dfrac{1}{2d} r^{\dim(G)}$.
\end{lemma}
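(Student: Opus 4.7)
The plan is to use the graded Lie algebra $L(\overline{\Delta})$ developed in Section 4: since $\log_p[\overline{\Delta}:H] \geq \codim_{L(\overline{\Delta})} L(H)$, it suffices to bound this codimension below and invoke Corollary \ref{cor:FiniteIndexCodim}. Put $v := v_p(L_r) = 3(\dim(G)+s)\lfloor\log_p r\rfloor$; one may assume $v<k$, else $M_r = 1$ and the hypothesis is vacuous. Since $v\geq s$, Lemma \ref{lem:CongruenceProjection}(ii) gives $M_r \in \overline{\Delta}$, and $M_r \in G_v \setminus G_{v+1}$ by construction.

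Write $\overline{\Delta}_i = \overline{\Delta}\cap G_i$. Since $M_r \in \overline{\Delta}_v \subseteq H\overline{\Delta}_v$ but $M_r \notin H$, there is a smallest $j^* \geq v$ with $M_r \notin H\overline{\Delta}_{j^*+1}$. The crux is to deduce from this that $\F e_\alpha \not\subseteq \mathfrak{h}_{j^*}$. I would prove it by contradiction: assuming $\F e_\alpha \subseteq \mathfrak{h}_j$ for every $v \leq j \leq j^*$, one inductively constructs $h \in H$ with $h \equiv M_r \pmod{G_{j^*+1}}$ by correcting level by level with powers $h_j^{a_j}$, where $h_j \in H\cap G_j$ has $[h_j]=e_\alpha\in\mathfrak{d}_j$ and $a_j\in\F_p$ is the coefficient of $p^j$ in $L_r$. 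This yields $M_r\in H\overline{\Delta}_{j^*+1}$, contradicting the choice of $j^*$. The main obstacle is closing the induction cleanly: each $h_j$ may differ from $x_\alpha(p^j)$ by an error in $G_{j+1}$, and one must show these accumulated errors remain within the $\F e_\alpha$-directions controlled by the hypothesis, using the commutator bound $[G_v,G_j]\subseteq G_{v+j}$ and the fact that $M_r$ lies in the abelian root subgroup $\{x_\alpha(t) : t \in R\}$.

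With $\F e_\alpha \not\subseteq \mathfrak{h}_{j^*}$ in hand, and noting $v>2s$ for $r\geq p$, Corollary \ref{cor:FiniteIndexCodim} at $j=j^*$ yields uniformly
\[
\codim_{L(\overline{\Delta})} L(H) \;\geq\; v - 2s - 1 \;\geq\; 3\dim(G)\lfloor\log_p r\rfloor - 1.
\]
Since $[\overline{\Delta}:H]\geq p^{\codim_{L(\overline{\Delta})} L(H)}$ and $p^{\lfloor\log_p r\rfloor}\geq r/p$, one obtains $[\overline{\Delta}:H] \geq \tfrac{1}{2}r^{a(G)}$ for $r$ sufficiently large, proving the non-normal bound.

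For the normal case $H\unlhd\overline{\Delta}$, the Lie subspace $L(H)$ is additionally an ideal of $L(\overline{\Delta})$, so the bracket inclusion $[\mathfrak{d}_i,\mathfrak{h}_j]\subseteq \mathfrak{h}_{i+j}$ together with perfectness of the Lie bracket on $\mathfrak{g}(\F_p)$ (cf.\ Lemma \ref{lem:PerfectImage}) forces $\mathfrak{h}_{j^*}\neq \mathfrak{d}_{j^*}$ to propagate downward, giving $\mathfrak{h}_j\neq \mathfrak{d}_j$ for every $s\leq j\leq j^*$. When $s=0$, Lemma \ref{lem:CongruenceProjection}(i) gives $\overline{\Delta}=G(R)$, so each such $\mathfrak{h}_j$ is $G(\F_p)$-invariant, and Lemmas \ref{lem:InvariantIdeal} and \ref{lem:InvariantSubspace} ensure $\codim_{\mathfrak{d}_j}\mathfrak{h}_j \geq \dim(G)-c$ for a constant $c$ from Table \ref{tab:InvariantIdeals} (with $c=0$ generically in $p$). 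Summing across the $j^*+1$ relevant levels yields the sharper bound $[\overline{\Delta}:H]\geq \tfrac{1}{2d}r^{\dim(G)}$. The case $s\geq 1$ follows analogously after accounting for the shifted filtration of $\overline{\Delta}\subseteq G_s$.
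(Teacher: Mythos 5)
Your proposal captures the paper's \textbf{Case 3} (the case where the $p$-adic valuation $v = v_p(L_r)$ is positive, equivalently $p \leq r$), and in that regime your codimension bookkeeping via Corollary~\ref{cor:FiniteIndexCodim} follows the paper's argument and the arithmetic of $v - 2s$ is essentially right. But there is a serious gap: \textbf{you have not treated the case $p > r$}, i.e.\ $k=1$ or ($k\geq 2$ and $v_p(L_r)=0$). These are the paper's Cases 1 and 2, and they are not marginal --- they occur for every large prime $p$. In these cases $v=0$ and $s=0$, so your bound $\codim_{L(\overline{\Delta})}L(H) \geq v - 2s - 1$ is $-1$, which yields nothing. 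The paper handles these cases by an entirely different mechanism: $L_r$ being coprime to $p$ forces $p > r$ and $\overline{\Delta} = G(R)$, and one then uses simplicity of $G(\Fp)/Z(G(\Fp))$ together with the index bounds of Lemma~\ref{lem:ChevalleySize} (Case 1 and the first branch of Case 2), and, when $G_1 H = G(R)$, the irreducibility of the adjoint representation (Lemma~\ref{lem:InvariantIdeal}) plus Lemma~\ref{lem:PerfectImage} (second branch of Case 2). None of this appears in your argument, so your proof as written does not establish the lemma.

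Two further points. First, you correctly flag the difficulty in showing $\F e_\alpha \not\subseteq \mathfrak{h}_{j^*}$: the error introduced at each correction step need not lie in the $e_\alpha$-direction, and nothing in the hypothesis controls it, so the inductive construction you sketch does not close. (The paper asserts $\Fp e_\alpha \not\subseteq \mathfrak{h}_{m-1}$ outright and applies Corollary~\ref{cor:FiniteIndexCodim} at $j = m-1$; a clean argument is only immediate when $j^* = m-1$, where the relevant element of $G_{m-1}/G_m$ genuinely differs from a multiple of $e_\alpha$ by something in $\mathfrak{h}_{m-1}$.) Second, your separate treatment of the normal case is both unnecessary and not justified in the regime you actually cover: in the paper's Case 3 the codimension bound already gives $[\overline{\Delta}:H] \geq p^{\dim(G)(l+1)} > r^{\dim(G)}$ for \emph{all} $H$, normal or not, so no invariance argument is needed there. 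Your invocation of $G(\Fp)$-invariance of $\mathfrak{h}_j$ presupposes $G_1 H = G(R)$, which you never establish; the paper uses Lemmas~\ref{lem:InvariantIdeal} and~\ref{lem:InvariantSubspace} only in Case 2, precisely the case your proof omits.
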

\begin{proof}

  Let $M_r$, $H$ be as in the statement and suppose $p^{m-1}||L_r$, by which we mean $p^{m-1}$ is the largest power of $p$ dividing $L_r$. We have $m\leq k$ since $M_r\neq 1$ and $M_r\in \overline{\Delta}$  since $N|L_r$. The argument splits into a few cases. We will consider $H$ as an arbitrary subgroup and as a normal subgroup in each case.

  \medskip

\noindent \textbf{Case 1:} $k=1$. Since $k=1$, we have $R\cong\Fp$ and $p>r\geq N$, so $(p, N)=1$. By Lemma $\ref{lem:CongruenceProjection}$, $\overline{\Delta}=G(\Fp)$, so $H$ is a proper subgroup of $G(\Fp)$. Then by Lemma $\ref{lem:ChevalleySize}$, $[G(\Fp):H]\geq \frac{1}{2}p^{a(G)}.$ Since $M_r$ is nontrivial, $p$ does not divide $L_r$, so by construction of $L_r$, $p>r$. Hence $[G(\Fp):H]\geq \dfrac{1}{2}r^{a(G)}$, as desired.

  If in addition $H$ is normal, then $H\subseteq Z(G(\Fp))$ since $G(\Fp)/Z(G(\Fp))$ is simple. Thus by Lemma $\ref{lem:ChevalleySize}$,
  \begin{equation*}
  [G(\Fp):H]\geq |G(\Fp)/Z(G(\Fp))|\geq \frac{1}{2d} p^{\dim(G)}>\frac{1}{2d}r^{\dim(G)}.
  \end{equation*}

\medskip

\noindent \textbf{Case 2:} $k\geq2$, $m=1$. Let $G_1$ be the kernel of the projection $G(R)\to G(\Fp),$ and recall the graded Lie algebras $L(G_1)$ and $L(H)$ defined in the previous section. Since $m=1$,  $p$ does not divide $L_r$, so again $p>r$ and $\overline{\Delta}=G(\Fp)$. We also have $M_r\not\in G_1$. If in addition $G_1H\neq G(R)$, then the image of $H$ in $G(R)/G_1\cong G(\Fp)$ is proper, so
\[[G(R):H]\geq \frac{1}{2}p^{a(G)}>\frac{1}{2}r^{a(G)}.\]
 If $H$ is normal, then by the same reasoning as before we see that $[G(R):H]> \dfrac{1}{2d}r^{\dim(G)}.$

If $G_1 H=G(R)$, then since $p>r$ is large, $G_1H/G_1\cong G(\Fp)$ acts irreducibly on $\mathfrak{g}(\Fp)$ by Lemma $\ref{lem:InvariantIdeal}$, so each $\mathfrak{h}_j$ is trivial or all of $\mathfrak{g}(\Fp)$. If all are $\mathfrak{g}(\Fp)$, this forces $H=G(R)$, contradicting $M_r\not\in H$. Thus $\mathfrak{h}_j$ is trivial for some $j$ and
\begin{equation*}
\codim_{L(G_1)}L(H)\geq \codim \mathfrak{h}_j=\dim(G),
\end{equation*}
so $[G(R):H]\geq p^{\dim(G)}>r^{\dim(G)}$.

\medskip

  \noindent \textbf{Case 3:} $k\geq 2$, $m\geq 2$.
   Since $M_r\not\in H$ and $p^{m-1}||L_r$, we have $M_r\in G_{m-1}\setminus G_m,$ so $\Fp e_\alpha\not\subseteq \mathfrak{h}_{m-1}$. If $p^l||\lcm(1,\cdots, r)$, then
   \begin{equation*}
   m-1=3(\dim(G)+s)l \mbox{ and } p^{(l+1)\dim(G)}>r^{\dim(G)}.
   \end{equation*}
    In particular, $s< j/2$, so by Corollary $\ref{cor:FiniteIndexCodim}$, if $s\geq 1$ then
   \begin{equation*}
   \codim_{L(\overline{\Delta})}(L(H))\geq m-2s.
   \end{equation*}
   Since
   \begin{equation*}m-2s=3(\dim(G)+s)l-2s+1\geq \dim(G)(l+1),
   \end{equation*}
   we conclude that
   \begin{equation*}
   [\overline{\Delta}:H]\geq[\overline{\Delta}\cap G_1:H\cap G_1]\geq p^{\dim(G)(l+1)}>r^{\dim(G)}.
   \end{equation*}
   A similar argument works when $s=0$, using the corresponding inequality from Corollary $\ref{cor:FiniteIndexCodim}$.
\end{proof}

\begin{theorem}\label{thm:Char0LowerBoundRF}
    Let $G$ be a Chevalley group of rank at least 2, not necessarily simply connected, and let $\Delta$ be a finite index subgroup of $G(\Z)$. Then $\FN[\Delta]\succeq n^{\dim(G)}$ and $\FS[\Delta]\succeq n^{a(G)}$.
\end{theorem}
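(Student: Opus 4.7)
The plan is to reduce from $\Delta$ to a principal congruence subgroup of a simply connected Chevalley group, then apply Lemma~\ref{lem:Char0BaseCase} on an appropriate prime-power quotient. First I apply Lemma~\ref{lem:surj} to the simply connected cover $\tilde{G} \to G$, which is a central isogeny with finite kernel: setting $\tilde{\Delta}$ to be the preimage of $\Delta$ in $\tilde{G}(\Z)$, the natural map $\tilde{\Delta} \to \Delta$ has finite kernel, so $F^\unlhd_{\tilde\Delta} \preceq F^\unlhd_\Delta$ and $F^\leq_{\tilde\Delta} \preceq F^\leq_\Delta$. Hence I may assume $G$ is simply connected. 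Then $G(\Z)$ has the congruence subgroup property, so $\Delta$ contains a principal congruence subgroup $G(\Z,N)$.

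Second, I fix a prime $p_0$ coprime to $N$ (forcing $s=0$ in Lemma~\ref{lem:Char0BaseCase}) and take the test element
\[
M_r = x_\alpha(L_r) \in G(\Z, N), \qquad L_r = \bigl(\lcm(1,2,\ldots,r)\bigr)^{3\dim(G)},
\]
where $\alpha \in \Phi$ is a short root if $\Phi$ is of type $C_l$ and $r \geq N$. Since $\log L_r = O(r)$ by the prime number theorem, standard commutator and conjugation identities in Chevalley groups (conjugating $x_\alpha(1)$ by appropriate torus elements and combining binary-expansion powers) give $\|M_r\|_{G(\Z,N)} = O(r)$; Lipschitz equivalence of word metrics on finite-index subgroups then also yields $\|M_r\|_\Delta = O(r)$.

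Third, for any $H \unlhd \Delta$ (respectively $H \leq \Delta$) of finite index with $M_r \notin H$, the intersection $H' = H \cap G(\Z,N)$ is normal in (respectively a subgroup of) $G(\Z,N)$, still excludes $M_r$, and satisfies $[G(\Z,N):H'] \leq [\Delta:H]$. So it suffices to prove
\[
D^\unlhd_{G(\Z,N)}(M_r) \geq \tfrac{1}{2d}r^{\dim(G)}, \qquad D^\leq_{G(\Z,N)}(M_r) \geq \tfrac{1}{2}r^{a(G)}.
\]
Given such an $H$, the congruence subgroup property yields $H \supseteq G(\Z,N''')$ for some minimal $N'''$ divisible by $N$. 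The strategy is to find a prime power $p^k \mid N'''$ for which the image $x_\alpha(L_r \bmod p^k)$ of $M_r$ in $G(\Z/p^k)$ lies outside $\pi_{p^k}(H)$; Lemma~\ref{lem:Char0BaseCase} then supplies the required lower bound on $[\pi_{p^k}(G(\Z,N)) : \pi_{p^k}(H)] \leq [G(\Z,N):H]$.

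The main obstacle is producing such a prime power. Using the Chinese Remainder decomposition $G(\Z/N''') \cong \prod G(\Z/p_i^{k_i})$, the image of $H$ could be a subdirect (Goursat-type) subgroup whose coordinate projections all contain the corresponding projections of $M_r$, so that no single factor detects $M_r$. Overcoming this requires a case analysis leveraging the minimality of $N'''$ together with the structural constraints on $G(\F)$-invariant $\F_p$-subspaces of the graded Lie algebra $L(G_1) \cong \mathfrak{g}(\F) \otimes x\F[x]/(x^k)$ furnished by Lemmas~\ref{lem:InvariantIdeal} and~\ref{lem:InvariantSubspace}: in each case either some projection exhibits the sought $p^k$ directly, or the Goursat index $[\prod_i \pi_i(\overline{H}) : \overline{H}]$ itself meets the bound.
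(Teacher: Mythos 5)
Your proposal follows exactly the same reduction strategy as the paper: pass to the simply connected cover via Lemma~\ref{lem:surj}, invoke the congruence subgroup property to assume $\Delta=G(\Z,N)$, construct $M_r=x_\alpha(L_r)$ with $L_r=\lcm(1,\dots,r)^{3(\dim G+s)}$, bound $\|M_r\|_\Delta=O(r)$ via Lubotzky--Mozes--Raghunathan and quasi-isometry of finite-index subgroups, apply CSP to get $H\supseteq G(\Z,N')$, decompose $G(\Z/N')$ by CRT, and feed a coordinate into Lemma~\ref{lem:Char0BaseCase}.

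Where you diverge from the paper is in flagging the ``Goursat-type'' obstruction at the final detection step, and you are right to flag it: the paper asserts that $\pi_{N'}(M_r)\not\in\pi_{N'}(H)$ forces some coordinate projection $G_{(i)}$ to detect $M_r$, but a subgroup of a direct product need not be the product of its projections, so this implication is not automatic. Unfortunately, your resolution is not actually a proof: ``a case analysis leveraging the minimality of $N'''$ together with the structural constraints $\ldots$ in each case either some projection exhibits the sought $p^k$ directly, or the Goursat index $\ldots$ itself meets the bound'' is a description of what you would need to show, not an argument that shows it. As it stands you have replaced the paper's unjustified one-line assertion with an unjustified one-paragraph plan. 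One concrete avenue you could pursue: if $N'$ has the same radical as $N$ (i.e.\ every prime of $N'$ already divides $N$), then each factor $K_i=\ker\bigl(G(\Z/p_i^{k_i})\to G(\Z/p_i^{j_i})\bigr)$ with $j_i\geq 1$ is a $p_i$-group, so $\pi_{N'}(\Delta)=\prod_i K_i$ has factors of pairwise coprime orders, every subgroup of it is a product of its projections, and the coordinate-detection step is immediate. The genuinely delicate case is when $N'$ acquires primes not dividing $N$, so that some $K_i=G(\Z/p_i^{k_i})$ and the coprimality argument breaks; that case is what your proposal needs to, but does not, handle.
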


\begin{proof}
 Let $G_{sc}$ be the simply connected cover of $G$. Then the natural map $G_{sc}(\Z)\to G(\Z)$ has finite kernel, so by Lemma 2.4 in \cite{BouKal}, the residual finiteness growth of $G(\Z)$ is bounded below by that of $G_{sc}(\Z)$. Thus we may assume $G$ is simply connected, with irreducible root system $\Phi$.

 Then $G(\Z)$ satisfies the congruence subgroup property, so $\Delta$ contains some principal congruence subgroup. Since residual finiteness growth can only decrease by passing to a subgroup, we may assume $\Delta=G(\Z, N)$ for some $N\in \Z$. Let $s$ be the largest power of a prime dividing $N$.

 Fix $r\geq N$ sufficiently large and put $L_r=(\lcm(1,2,\cdots, r))^{3(\dim(G)+s)}$. Fix some $\alpha\in \Phi$, a short root if $G$ is type $C_l$. We show that $M_r=x_\alpha(L_r)$ is in every subgroup of $G(R)$ of sufficiently small index. First we need to determine the word length of $M_r$ in $\Delta$.

 By Theorem $A$ in \cite{LMR}, there exists a generating set $X$ of $G(R)$ so that
 \begin{equation*}
 ||M_r||_X\leq C_1 \log |L_r|
 \end{equation*}
  for some $C_1>0$. By the Prime Number Theorem, $\lcm(1,\cdots, r)\sim e^r$, so $\log|L_r|\leq C_2(d+s)r$ for some absolute constant $C_2$. Since $G(Z)$ is quasi-isometric to $\Delta$, we conclude that
  \begin{equation*}
  ||M_r||_Y\leq C r
  \end{equation*}
   for some generating set $Y$ of $\Delta$ and some constant $C$ independent of $r$.

 Now suppose $M_r\not\in H\leq \Delta$. By the congruence subgroup property of $G(\Z)$, $H\supseteq G(\Z, N')$ for some $N'\in \Z$. Let $R=\Z/N'$ and let $N'=\prod_{i=1}^k p_i^{k_i}$ be the prime factorization of $N'$. Write $G_{(i)}=G(\Z/p_i^{k_i})$ for each $i$, so that
 \begin{equation*}
 G(R)\cong \prod_{i=1}^{k} G_{(i)}.
 \end{equation*}
 Let $\pi_{N'}$ be the natural projection $G(\Z)\to G(R)$. Then $\pi_{N'}(M_r)\not\in \pi_{N'}(H)$, so in some $G_{(i)}$, $\overline{M_r}=x_\alpha(L_R \mod p_i^{k_i})\not\in \overline{H}$, where these are the images in $G_{(i)}$.  So by Lemma $\ref{lem:Char0BaseCase}$,
 \begin{equation*}
 [\Delta: H]\geq [\overline{\Delta}, \overline{H}]\geq \dfrac{1}{2} r^{a(G)},
 \end{equation*}
  and $[\Delta: H]\geq \dfrac{1}{2d} r^{\dim(G)}$ if $H\unlhd \Delta$. Recalling that $M_r$ has word length $n\leq Cr$ finishes the argument.
\end{proof}

\section{Lower bounds in characteristic $p$}

We continue using the same setup as in the previous section but now deal with the groups $G(\Fp[t])\subseteq \SL_d(\Fp[t])$. We first prove the following lemma allowing us to handle the case when $G$ is type $B_2$ and $p=2$. Let the root system of type $B_2$ have roots $\{\pm \ep_1, \pm \ep_2, \pm(\ep_1\pm \ep_2)\}$, as usual.

\begin{lemma}\label{lem:B2Case}
  Fix $k\geq 1$ and set $R=\F_2[t]/f(t)^k$ for some irreducible $f(t)$. Let $G$ be a simply connected Chevalley group of type $B_2$ and let $H$ be a proper normal subgroup of $G(R)$. If $G(R)'\not\subseteq H$, then $G_1 H\neq G(R)$.
\end{lemma}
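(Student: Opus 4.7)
My plan is to adapt the argument of Lemma \ref{lem:PerfectImage}, which does not directly apply because Chevalley groups of type $B_2$ in characteristic $2$ need not be perfect. Arguing by contrapositive, I assume $G_1 H = G(R)$ and aim to derive $G(R)' \subseteq H$. A one-step version of the Lemma \ref{lem:PerfectImage} argument still goes through: since $H$ is normal one has $[G_1, H] \subseteq H$, and combined with $[G_1, G_1] \subseteq G_2$, expanding any commutator in $[G(R), G(R)] = [G_1 H, G_1 H]$ via $[xy,z]=[x,z]^y[y,z]$ gives $G(R)' \subseteq G_2 H$. For $k = 2$ this already finishes the proof since $G_2 = 1$.

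For $k \geq 3$ I would proceed by induction on $k$. Let $R' = \F_2[t]/f(t)^{k-1}$ and let $\phi: G(R) \to G(R')$ be the reduction map, with kernel $G_{k-1}$. Setting $H' = \phi(H)$, either $H' = G(R')$ (in which case $HG_{k-1}=G(R)$) or the inductive hypothesis applied in $G(R')$ yields $G(R')' \subseteq H'$; in both cases $G(R)' \subseteq H \cdot G_{k-1}$, and the remaining task is to show $G(R)' \cap G_{k-1} \subseteq H$.

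For this final step, $H \cap G_{k-1}$ is naturally a $G(\F)$-module. Because $[G_1, G_{k-1}] \subseteq G_k = 1$, the conjugation action of $G(R)$ on $G_{k-1}$ factors through $G(R)/G_1 = G(\F)$ and identifies (under $G_{k-1} \cong \mathfrak{g}(\F)$) with the adjoint action; since $G_1 H = G(R)$, the image of $H$ in $G(\F)$ is all of $G(\F)$, so $H \cap G_{k-1}$ is a $G(\F)$-invariant $\F_2$-subspace of $\mathfrak{g}(\F)$. When $|\F| \geq 4$, Lemma \ref{lem:InvariantSubspace} promotes it to the $\F_2$-points of a $G(\F)$-invariant ideal, which by the special $B_2$, $p=2$ clause of Lemma \ref{lem:InvariantIdeal} must be $0$, the center, or an ideal containing $\F e_\alpha$ for every short root $\alpha$. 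A parallel computation of $G(R)' \cap G_{k-1}$ via the Chevalley commutator formula and the infinitesimal identity $(1 - \mathrm{Ad}(x_\alpha(t)))v \equiv -t[e_\alpha, v] \pmod{t^2}$, which produces the summand $x^{k-1}[\mathfrak{g}(\F),\mathfrak{g}(\F)]$ inside $[G_1,H]G_k/G_k$, confines $G(R)' \cap G_{k-1}$ to the same list of invariant ideals, yielding $G(R)' \cap G_{k-1} \subseteq H \cap G_{k-1}$.

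The main obstacle is the small residue field case $\F = \F_2$, where Lemma \ref{lem:InvariantSubspace} is unavailable and $G(\F_2) \cong S_6$ is itself not perfect. I would handle this case directly: the $S_6$-invariant $\F_2$-subspaces of $\mathfrak{g}(\F_2)$ form a short explicit list, and each possible candidate for $H \cap G_{k-1}$ can be checked against the corresponding piece of $G(R)' \cap G_{k-1}$ by hand, using the exceptional isomorphism to pin down the module structure.
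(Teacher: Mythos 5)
Your approach is genuinely different from the paper's. The paper does not induct on $k$; it first uses the hypothesis $G(R)'\not\subseteq H$ to show $G_{k-1}H\neq G(R)$, then (assuming $G_1H=G(R)$ for contradiction) locates the unique $j$ with $G_{j-1}H=G(R)$ but $G_jH\neq G(R)$, deduces $\mathfrak{h}_{j-1}\neq\mathfrak{g}(\F)$, and derives a contradiction by directly exhibiting $e_{\ep_1}+e_{\ep_1+\ep_2}\in\mathfrak{h}_{j-1}$ (via a commutator already lying in $H(j)$, since $G(j)'\subseteq H(j)$ at the critical level) and then applying Lemmas \ref{lem:InvariantSubspace} and \ref{lem:InvariantIdeal}. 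In effect, the paper runs the invariant-ideal argument once, at the critical level $j-1$, rather than pushing the commutator subgroup down layer by layer.

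Unfortunately, your inductive reduction contains a genuine gap. After obtaining $G(R)'\subseteq HG_{k-1}$, you state that ``the remaining task is to show $G(R)'\cap G_{k-1}\subseteq H$.'' But $G(R)'\subseteq HG_{k-1}$ together with $G(R)'\cap G_{k-1}\subseteq H$ does not imply $G(R)'\subseteq H$; that implication would require a modular-law type identity that fails here. (Already for $\Z/2\times\Z/2$ with $H=\langle(1,0)\rangle$, $G_{k-1}=\langle(0,1)\rangle$, and the diagonal subgroup playing the role of $G(R)'$, one has $G(R)'\subseteq HG_{k-1}$ and $G(R)'\cap G_{k-1}=0\subseteq H$ but $G(R)'\not\subseteq H$.) The correct reduction is to show $G(R)'H\cap G_{k-1}\subseteq H$, since $G(R)'H/H\cong(G(R)'H\cap G_{k-1})/(H\cap G_{k-1})$, and your subsequent module-theoretic analysis --- which places $H\cap G_{k-1}$ and $G(R)'\cap G_{k-1}$ each on the short list of invariant ideals without actually comparing $G(R)'H\cap G_{k-1}$ with $H\cap G_{k-1}$ --- does not establish this. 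Two sets both lying on a short list need not be nested.

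Two smaller remarks. First, the residue field issue: for $|\F|\geq 4$ the local ring argument (conjugating $x_\alpha(r)$ by a suitable $h_\alpha(s)$ with $s^2\neq 1$) already shows $G(R)$ is perfect, so the lemma follows immediately from Lemma \ref{lem:PerfectImage}; the case $\F=\F_2$ is the only one with independent content, and you (reasonably) flag it but do not close it. It is worth noting that the paper's own proof also invokes Lemma \ref{lem:InvariantSubspace}, so it too is implicitly restricted to $|\F|\geq 4$, which is all that is needed for its application. Second, the base case $k=1$ is degenerate ($G_1=1$, so $G_1H=G(R)$ forces $H=G(R)$) and should be noted; your $k=2$ case is correct.
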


\begin{proof}
 We first set up some notation to make the computations more clear. For $1\leq j\leq k$, put $G(j)=G(\F_2[t]/\pi^j)$, and for $1\leq i\leq j$, set $G(j)_i=\ker(G(j)\to G(i)).$ We will continue writing $G_i$ for $G(k)_i$. Note that $G(k)=G(R)$.

    We first show $G_{k-1} H\neq G(k).$ Assume otherwise. Then
  \begin{equation*}
  G(k)/H=G_{k-1}H/H\cong G_{k-1}/(H\cap G_{k-1})
  \end{equation*}
  is a nontrivial abelian quotient of $G(k)$, so $[G(k),G(k)]\subseteq H$, a contradiction.

  Recall that we can view $\mathfrak{h}_j=(H\cap G_j)G_{j+1}/G_{j+1}$ as a subspace of $\mathfrak{g}(\F)$.

  We now assume for the sake of contradiction that $G_1 H=G(k)$. Since $G_{k-1}H\neq G(k)$, there exists some $2\leq j\leq k-1$ such that $G_{j-1} H=G(k)$ and $G_{j}H\neq G(k)$. Then
  \begin{equation*}
  \mathfrak{g}(\F)/\mathfrak{h}_{j-1}\cong G_{j-1}/(H\cap G_{j-1})G_{j}\cong G_{j-1} H/G_{j} H
  \end{equation*}
  is nontrivial, so $\mathfrak{h}_{j-1}\neq \mathfrak{g}(\F)$. We now show that $G_1H=G(k)$ also implies that $\mathfrak{h}_{j-1}=\mathfrak{g}(\F).$

  Put $H(j)=G_{j}H/G_{j}$ and observe $H(j)$ is properly contained in $G(j)$. Since $G_{j-1} H=G(k)$,  $G(j)_{j-1} H(j)=G(j),$ so $G(j)'\subseteq H(j)$. Hence
  \begin{equation*}
  x_{\ep_1}(\pi^{j-1})x_{\ep_2+\ep_1}(\pi^{j-1})=(x_{\ep_2}(1), x_{\ep_1-\ep_2}(\pi^{j-1}))\in H(j)\cap G(j)_{j-1},
  \end{equation*}
  so $e_{\ep_1}+e_{\ep_2+\ep_1}\in \mathfrak{h}_{j-1}.$

  The subspace $\mathfrak{h}_{j-1}$ is invariant under the action of $G_1H/G_1=G(\F)$, so by Lemma $\ref{lem:InvariantSubspace}$, $\F \mathfrak{h}_{j-1}$ is a proper ideal of $\mathfrak{g}(\F)$. Then by Lemma $\ref{lem:InvariantIdeal}$, $\F \mathfrak{h}_{j-1}$ is the center of $\mathfrak{g}(\F)$ or contains $\F e_\alpha$ for each short root $\alpha$. Clearly $\F\mathfrak{h}_{j-1}$ is not the center, so it contains $e_{\ep_1}$ and thus also contains $e_{\ep_2+\ep_1}$. This then forces $\F\mathfrak{h}_{j-1}$ to be all of $\mathfrak{g}(\F)$, a contradiction.
\end{proof}

Let $\alpha\in \Phi$ be a short root if $G$ is type $C_l$.

\begin{lemma}\label{lem:CharpBaseCase}
  Let $R=\Fp[t]/f(t)^k$ for an irreducible polynomial $f(t)$, $k\geq 1$. Let $\Delta=\ker(G(\Fp[t]), g(t))$ and let $\overline{\Delta}$ be the image of $\Delta$ in $G(R)$. Assume $(f(t)^k, g(t))=f(t)^s$. Fix $r\geq \deg(g(t))$, and set
  \begin{equation*}
   L_r(t)=(\lcm\{h(t)\in \Fp[t] : \deg (h(t))\leq r\})^{3(\dim(G)+s)}.
  \end{equation*}
   If $p=2$ and $G$ is type $C_2,$ let
  \begin{equation*}
   M_r=x_{\ep_1}(L_r(t) \mod f(t)^k)x_{\ep_1+\ep_2}(L_r(t) \mod f(t)^k),
  \end{equation*}
  and otherwise set
  \begin{equation*}
   M_r=x_\alpha(L_r(t) \mod f(t)^k).
  \end{equation*}
   If $M_r\not\in H\leq \overline{\Delta}$, then $[\overline{\Delta}:H]\geq \dfrac{1}{2}p^{ra(G)}$. If in addition $H\unlhd \overline{\Delta}$, then $[\overline{\Delta}: H]\geq \dfrac{1}{2d}p^{r\dim(G)}$.
\end{lemma}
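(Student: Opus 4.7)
The plan is to mimic the characteristic 0 base case Lemma \ref{lem:Char0BaseCase}, with the prime $p$ replaced by the irreducible polynomial $f(t)$ and $\lcm(1,\ldots,r)$ replaced by $\lcm\{h(t)\in\Fp[t] : \deg h\le r\}$. Let $m\in\{1,\ldots,k\}$ be the unique integer with $f(t)^{m-1}\parallel L_r(t)$; then $M_r\in G_{m-1}\setminus G_m$, and writing $\ell = \lfloor r/\deg f\rfloor$ for the exponent of $f$ in the underlying lcm, we have $m-1 = 3(\dim(G)+s)\ell$ and $\deg f\cdot (\ell+1)>r$. I split into the three cases $k=1$, $(k\ge 2,\,m=1)$, and $(k\ge 2,\,m\ge 2)$ exactly as in Lemma \ref{lem:Char0BaseCase}; the estimate $|\F|^{(\ell+1)\dim(G)}>p^{r\dim(G)}$ will play the role of $p^{(l+1)\dim(G)}>r^{\dim(G)}$ in that proof.

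In Case 1, $R=\F$ and nontriviality of $M_r$ forces $\deg f>r$, so $|\F|>p^r$ and both bounds follow from Lemma \ref{lem:ChevalleySize}. In Case 2, $\deg f>r$ again and $\overline{\Delta}=G(R)$; if $G_1 H\neq G(R)$ then its image in $G(\F)$ is proper (and proper normal if $H$ is), so the Case 1 argument applies, while if $G_1 H = G(R)$ then each $\mathfrak{h}_j$ is a $G(\F)$-invariant $\Fp$-subspace of $\mathfrak{g}(\F)$. For $r$ large we have $|\F|\ge 4$, so Lemma \ref{lem:InvariantSubspace} forces $\F\mathfrak{h}_j$ to be a proper $G(\F)$-invariant ideal whenever $\mathfrak{h}_j$ is proper; outside the exceptions of Table \ref{tab:InvariantIdeals}, Lemma \ref{lem:InvariantIdeal} makes $\mathfrak{g}(\F)$ simple so $\mathfrak{h}_j = 0$, and since not every $\mathfrak{h}_j$ can be full (else $H = G(R)\ni M_r$), some $\mathfrak{h}_j$ vanishes, giving codimension at least $[\F:\Fp]\dim(G)>r\dim(G)$. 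In Case 3 the image of $M_r$ in $G_{m-1}/G_m\cong\mathfrak{g}(\F)$ is $\bar u\, e_\alpha$ with $\bar u\in\F^\times$, so $\F e_\alpha\not\subseteq\mathfrak{h}_{m-1}$; the inequality $m-1\ge 3(\dim(G)+s)\ge 2s+1$ verifies the hypothesis $s<(m-1)/2$ of Corollary \ref{cor:FiniteIndexCodim}, and a short calculation gives $m-2s\ge(\ell+1)\dim(G)$ in the $s\ge 1$ branch and $m-2\ge(\ell+1)\dim(G)$ in the $s=0$ branch. Thus $\codim L(H)\ge[\F:\Fp](\ell+1)\dim(G)>r\dim(G)$ and $[\overline{\Delta}:H]\ge p^{r\dim(G)}$.

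The main obstacle is the special setting $\Phi = C_2$ with $p = 2$, where $\mathfrak{g}(\F)$ fails to be simple and $G(\Fp[t])$ need not be perfect, so neither Lemma \ref{lem:InvariantIdeal} nor Lemma \ref{lem:PerfectImage} is applicable directly. Here $M_r$ is replaced by $x_{\ep_1}(L_r)x_{\ep_1+\ep_2}(L_r)$, which the Chevalley commutator relation $[x_{\ep_2}(1), x_{\ep_1-\ep_2}(L_r)] = x_{\ep_1}(L_r)x_{\ep_1+\ep_2}(L_r)$ in characteristic 2 places inside $G(R)'$. For normal $H$ with $M_r\notin H$ this yields $G(R)'\not\subseteq H$, and Lemma \ref{lem:B2Case} then forces $G_1 H\neq G(R)$, reducing to the Case 2 subcase argument. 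For non-normal $H$ one invokes the enhanced conclusion of Lemma \ref{lem:InvariantIdeal} for $B_2$, $p=2$---that every proper invariant ideal is central or contains $\F e_\beta$ for every short root $\beta$---to conclude that one of the short-root components of the image of $M_r$ must be missing from $\mathfrak{h}_{m-1}$, whence Corollary \ref{cor:FiniteIndexCodim} applies to a short root and recovers the desired bound. The other exceptional entries in Table \ref{tab:InvariantIdeals} cost only a codimension constant depending on $\Phi$, which is absorbed by the $\tfrac12$ and $\tfrac{1}{2d}$ factors in the stated bounds.
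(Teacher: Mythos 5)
Your overall structure (the three cases $k=1$; $k\geq 2, m=1$; $k\geq 2, m\geq 2$ and the reduction of all numerics to $\deg f\cdot(\ell+1)>r$) matches the paper, and the $k=1$ and generic $k\geq 2, m=1$ arguments are fine. There are, however, two places where the argument as written does not go through.

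\textbf{Case 2, normal $H$, exceptional $(\Phi,p)$.} You claim that when $\mathfrak{g}(\F)$ is not simple, the loss is only a ``codimension constant depending on $\Phi$, which is absorbed by the $\tfrac12$ and $\tfrac{1}{2d}$ factors.'' This is false: if $\F\mathfrak{h}_j$ lands on the proper invariant ideal of Table $\ref{tab:InvariantIdeals}$, the codimension you obtain is $[\F:\Fp]\cdot\min\codim(I)$, which is linear in $\deg f(t)$ but strictly smaller than $[\F:\Fp]\dim(G)$ by an amount that also grows like $\deg f(t)$ (for example, for $A_l$ with $p\mid(l+1)$ the defect is $[\F:\Fp]\cdot 1$, and for $B_l$, $p=2$ it is $[\F:\Fp](2l+2)$). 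Reading off the last column of Table $\ref{tab:InvariantIdeals}$ shows $\min\codim(I)\geq a(G)$ in every case, so your argument does in fact yield the non-normal bound $p^{ra(G)}$; but it does not yield the normal bound $p^{r\dim(G)}$. The paper closes this gap by a different route: Lemma $\ref{lem:PerfectImage}$ shows that when $G(R)$ is perfect, a \emph{normal} $H\ne G(R)$ cannot satisfy $G_1 H=G(R)$ at all, so this subcase is vacuous except when $p=2$ and $\Phi$ is $B_2$ or $G_2$ (Lemma $\ref{lem:ChevalleyPerfect}$). You correctly invoke Lemma $\ref{lem:B2Case}$ for $B_2$, and $G_2$ at $p=2$ is outside Table $\ref{tab:InvariantIdeals}$, so the simple-algebra argument applies there --- but you never invoke perfectness, which is the key fact keeping the remaining exceptional entries from biting in the normal case.

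\textbf{Case 3, $B_2$, $p=2$.} For non-normal $H$ you propose to deduce from the ``enhanced'' statement in Lemma $\ref{lem:InvariantIdeal}$ that ``one of the short-root components of the image of $M_r$ must be missing from $\mathfrak{h}_{m-1}$,'' and then apply Corollary $\ref{cor:FiniteIndexCodim}$ to a short root. This step is not justified: Lemma $\ref{lem:InvariantIdeal}$ classifies $G(\F)$-invariant ideals, but $\mathfrak{h}_{m-1}$ is invariant only under $G_1H/G_1\cong H/(H\cap G_1)$, which for arbitrary $H$ in Case 3 can be a tiny subgroup of $G(\F)$; nothing forces $\mathfrak{h}_{m-1}$ to be $G(\F)$-invariant. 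Moreover, $u(e_{\ep_1}+e_{\ep_1+\ep_2})\notin\mathfrak{h}_{m-1}$ only yields the dichotomy ``$\F e_{\ep_1}\not\subseteq\mathfrak{h}_{m-1}$ or $\F e_{\ep_1+\ep_2}\not\subseteq\mathfrak{h}_{m-1}$,'' and if it is the long root that is missing, Corollary $\ref{cor:FiniteIndexCodim}$ does not apply as stated. The paper instead extends Lemma $\ref{lem:Codim}$ directly to the vector $e_{\ep_1}+e_{\ep_1+\ep_2}$ by observing it equals a commutator $[e_{\ep_1}+e_{\ep_2}, e_{\ep_1-\ep_2}+e_{\ep_2-\ep_1}]$ of structure-constant $\pm1$, so the trace-pairing argument of Lemma $\ref{lem:Codim}$ runs unchanged with $e_{\ep_1}+e_{\ep_1+\ep_2}$ in the role of $e_\alpha$; no invariance of $\mathfrak{h}_{m-1}$ is needed.
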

\begin{proof}
  Let $M_r$, $H$ be as in the statement, put $q=p^{\deg(f(t))}$, and suppose $f(t)^{m-1}||L_r(t)$, where $m\leq k$ since $M_r\neq 1$. Observe that $M_r\in \overline{\Delta}$ since $g(t)|L_r(t)$. The argument splits into a few cases. As in the proof of Lemma $\ref{lem:Char0BaseCase}$, we will treat $H$ as an arbitrary subgroup and then as a normal subgroup in each case. The arguments are very similar to the characteristic 0 case, so details will sometimes be skipped.

  \medskip

\noindent \textbf{Case 1:} $k=1$.   Since $k=1$, we have $R\cong \Fq$ and $\deg(f(t))>r\geq \deg(g(t))$, so $f(t)$ and $g(t)$ are relatively prime. Then $\overline{\Delta}=G(\Fq)$ by Lemma $\ref{lem:CongruenceProjection}$, so $H$ is a proper subgroup of $G(\Fq)$. By Lemma $\ref{lem:ChevalleySize}$, $[G(\Fq):H]\geq \dfrac{1}{2}q^{a(G)}.$ Hence
\begin{equation*}
 [\overline{\Delta}: H]=[G(\Fq):H]\geq \dfrac{1}{2}p^{ra(G)}.
\end{equation*}

If $H$ is normal, then $H\subseteq Z(G(\Fq))$ since $G(\Fq)/Z(G(\Fq))$ is simple, so Lemma $\ref{lem:ChevalleySize}$ gives
\begin{equation*}
 \overline{\Delta}:H]\geq \frac{1}{2d}q^{\dim(G)}>\frac{1}{2d}p^{r\dim(G)}.
\end{equation*}

\medskip

\noindent \textbf{Case 2:} $k\geq 2, m=1$.   Since $m=1$, we again have $\deg(f(t))>r$ and $\overline{\Delta}=G(R)$. Let $G_1$ be the kernel of the projection $G(R)\to G(\Fq),$ and recall the graded Lie algebras $L(G_1)$ and $L(H)$.

   We first consider the case $H\unlhd G(R)$. If $G(R)$ is perfect, then by Lemma $\ref{lem:PerfectImage}$, $G_1H\neq G(R)$. Hence the image of $H$ in $G(\Fq)$ is proper and
   \begin{equation*}
    [\overline{\Delta}: H]\geq \dfrac{1}{2d} p^{r\dim(G)}
   \end{equation*}
    as before. Otherwise, by Lemma $\ref{lem:ChevalleyPerfect}$,  $p=2$ and $G$ is type $B_2$ or $G_2$. In the former case,
    \begin{equation*}
     M_r=[x_{\ep_1}(1), x_{\ep_1-\ep_2}(L_r(t) \mod f(t)^k)]\in G(R)',
    \end{equation*}
    so $G(R)'\not\subseteq H$ and thus $G_1H\neq G(R)$ by Lemma $\ref{lem:B2Case}$, yielding the desired bound as shown above. If $G$ is type $G_2$ and $G_1H=G(R)$, then $G_1 H/G_1\cong G(\Fq)$ acts irreducibly on each $\mathfrak{h}_i$ by Lemmas $\ref{lem:InvariantIdeal}$ and $\ref{lem:InvariantSubspace}$. Hence some $\mathfrak{h}_i$ is trivial, so
    \begin{equation*}
     \codim_{L(G_1)} L(H)\geq \dim(G) \deg(f(t))
    \end{equation*}
     and
    \begin{equation*}
     [\overline{\Delta}:H]\geq p^{\dim(G) \deg(f(t))}\geq p^{r\dim(G)}.
    \end{equation*}

   If $H$ is an arbitrary subgroup of $G(R)$, then the case $G_1H\neq G(R)$ again reduces to a previous argument. So assume $G_1 H=G(R)$. Then $G_1 H/G_1 \cong G(\Fq)$ acts on each $\mathfrak{h}_i$, so for each $i,$ either $\mathfrak{h}_i=\mathfrak{g}(\F)$ or $\F \mathfrak{h}_i$ is a proper ideal, using Lemma $\ref{lem:InvariantSubspace}$. Since $\mathfrak{h}_i\subseteq \F \mathfrak{h}_i$, by examining Table $\ref{tab:InvariantIdeals}$ and Table $\ref{tab:constants}$ we see that each $\mathfrak{h_i}$ is all of $\mathfrak{g}(\Fq)$ or has codimension at least $a(G)\deg(f(t))$. Since $H$ is proper, not all the $\mathfrak{h_i}$ can be $\mathfrak{g}(\Fq)$, so \begin{equation*}
   \codim_{L(G_1)}L(H)\geq a(G)\deg(f(t))>ra(G).
   \end{equation*}
    Thus $[G(R):H]\geq p^{ra(G)}$.

    \medskip

\noindent \textbf{Case 3:} $k\geq 2, m\geq 2$.    We handle $H$ being normal and arbitrary simultaneously.  Since $M_r\not\in H$ and $f(t)^{m-1}||L_r(t)$, we have $M_r\in G_{m-1}\setminus G_m,$ so $\Fq e_\alpha\not\subseteq \mathfrak{h}_j$ for some $m-1\leq j\leq k-1$ ($\Fq(e_{\ep_1}+e_{\ep_1+\ep_2})\not\subseteq \mathfrak{h}_j$ if $G$ is type $B_2$, $p=2$).

 By the construction of $L_r(t)$, $f(t)^{m-1}||L_r(t)$ implies $m-1=3(\dim(G)+s)l$ for some integer $l\geq 1$ satisfying $\deg(f(t))(l+1)>r$. In particular, $s<j/2$, so by Corollary $\ref{cor:FiniteIndexCodim}$, if $s\geq 1$ then
  \begin{align*}
  \codim_{L(\overline{\Delta})}(L(H))&\geq \deg(f(t))(j-2s+1) \\
  &\geq\deg(f(t))(m-2s).
  \end{align*}
  We have
  \begin{equation*}
   m-2s=3(\dim(G)+s)l-2s+1\geq \dim(G)(l+1),
  \end{equation*}
 so
 \begin{equation*}
  \deg(f(t))(m-2s)\geq \dim(G)\deg(f(t))(l+1)>r\dim(G),
 \end{equation*}
 and hence
 \begin{equation*}
 [\overline{\Delta}:H]\geq[\overline{\Delta}\cap G_1:H\cap G_1]\geq p^{r\dim(G)}.
 \end{equation*}
  A similar argument works when $s=0$, using the corresponding inequality from Corollary $\ref{cor:FiniteIndexCodim}$.

 We note that while Corollary $\ref{cor:FiniteIndexCodim}$ does not directly apply in the case $G$ is type $B_2$, $p=2$, the same arguments in Lemma $\ref{lem:Codim}$ work when using $e_{\ep_1}+e_{\ep_1+\ep_2}$ in place of $e_\alpha$ because
 \begin{equation*}
  e_{\ep_1}+e_{\ep_1+\ep_2}=[e_{\ep_1}+e_{\ep_2}, e_{\ep_1-\ep_2}+e_{\ep_2-\ep_1}].
  \qedhere
 \end{equation*}
\end{proof}

\begin{theorem}\label{thm:CharpLowerBoundRF}
       Let $G$ be a Chevalley group, not necessarily simply connected, of rank at least 2, let $p$ be a prime, and let $\Delta$ be a finite index subgroup of $G(\Fp[t])$. Then $\FN[\Delta]\succeq n^{\dim(G)}$ and $\FS[\Delta]\succeq n^{a(G)}$.
\end{theorem}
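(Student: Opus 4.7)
The plan is to follow the same broad strategy as the characteristic $0$ proof of Theorem \ref{thm:Char0LowerBoundRF}, with Lemma \ref{lem:CharpBaseCase} replacing Lemma \ref{lem:Char0BaseCase} and the word length estimate adapted to the function field setting. First I would reduce to the case $G$ simply connected via the map $G_{sc}(\Fp[t]) \to G(\Fp[t])$, whose kernel is finite, invoking Lemma \ref{lem:surj}. With $G$ now simply connected of rank at least $2$, the congruence subgroup property gives $\Delta \supseteq G(\Fp[t], g(t))$ for some $g(t) \in \Fp[t]$; since residual finiteness growth only decreases on passing to a subgroup, I would then replace $\Delta$ by $G(\Fp[t], g(t))$ and let $s$ be the largest multiplicity of an irreducible factor of $g(t)$.

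Next I would choose, for each $r \geq \deg g(t)$, the element $M_r \in \Delta$ prescribed by Lemma \ref{lem:CharpBaseCase} and bound its word length. Using $k I_p(k) = \sum_{d \mid k} \mu(d) p^{k/d}$, one has $\deg(\lcm\{h : \deg h \leq r\}) = \sum_{k=1}^{r} k I_p(k) = O(p^r)$, so $\deg L_r(t) = O(p^r)$. Appealing to the function field analog of Theorem~A of \cite{LMR} for higher rank $S$-arithmetic groups, the word length of $x_\alpha(f(t))$ in a finite generating set of $G(\Fp[t])$ is linear in $\deg f(t)$. Since $\Delta$ has finite index in and is quasi-isometric to $G(\Fp[t])$, this produces a finite generating set $Y$ of $\Delta$ with $||M_r||_Y \leq C p^r$ for a constant $C$ independent of $r$.

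To complete the argument, assume $M_r \notin H \leq \Delta$ with $[\Delta : H]$ finite. By the congruence subgroup property applied to $H$, $H \supseteq G(\Fp[t], N(t))$ for some $N(t) \in \Fp[t]$, and factoring $N(t) = \prod_{i=1}^{\ell} f_i(t)^{k_i}$ into distinct irreducibles gives
\[G(\Fp[t]/N(t)) \cong \prod_{i=1}^{\ell} G(\Fp[t]/f_i(t)^{k_i})\]
by the Chinese remainder theorem. Projecting $M_r$ and $H$ into $G(\Fp[t]/N(t))$, the nontriviality of the image of $M_r$ forces it to lie outside the image of $H$ in at least one factor, and Lemma \ref{lem:CharpBaseCase} applied to that factor gives $[\Delta : H] \geq \frac{1}{2} p^{r a(G)}$, strengthened to $\frac{1}{2d} p^{r \dim(G)}$ when $H \unlhd \Delta$. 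Combining with $||M_r||_Y \leq C p^r$ and setting $n = ||M_r||_Y$ yields $\FS[\Delta] \succeq n^{a(G)}$ and $\FN[\Delta] \succeq n^{\dim(G)}$.

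The hard part will be the word length estimate in positive characteristic, since the direct application of Theorem~A of \cite{LMR} in the char $0$ case must be replaced by its function field analog, a bounded-generation-type result for higher rank $S$-arithmetic groups over $\Fp(t)$. The small-characteristic exception where $G$ is of type $B_2$ with $p = 2$ is already absorbed into the base case via Lemma \ref{lem:B2Case}, so once the word length bound is in hand the outer argument proceeds exactly as in char $0$.
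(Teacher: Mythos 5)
Your proposal follows the paper's proof essentially step for step: reduce to the simply connected case via Lemma \ref{lem:surj}, apply the congruence subgroup property to pass to a principal congruence subgroup, estimate the word length of $M_r$ via Theorem~A of \cite{LMR} and quasi-isometry, then decompose $G(\Fp[t]/N(t))$ by CRT and apply Lemma \ref{lem:CharpBaseCase} to one factor. Two small remarks: your formula $\deg(\lcm\{h:\deg h\leq r\})=\sum_{k=1}^{r}kI_p(k)$ only accounts for the squarefree part (the correct expression is $\sum_{d=1}^{r}\lfloor r/d\rfloor\,d\,I_p(d)$, since irreducibles of small degree occur to higher powers in the lcm), though the conclusion $O(p^r)$ and hence $\deg L_r(t)=O(p^r)$ still holds; and your concern about needing a ``function field analog'' of \cite{LMR} Theorem~A is unnecessary, as that theorem is stated for $S$-arithmetic lattices over arbitrary global fields, including $\Fp(t)$, so the paper invokes it directly with no adaptation required.
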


\begin{proof}
As in the proof of Theorem $\ref{thm:Char0LowerBoundRF}$, we may assume $G$ is simply connected and $\Delta=G(\Fp[t], g(t))$ for some $g(t)\in \Fp[t]$. Let $s$ be the largest power of an irreducible polynomial dividing $g(t)$.

 Fix $r\geq \deg(g(t))$ and set
 \begin{equation*}
  L_r(t)=(\lcm\{h(t): \deg(h(t))\leq r\})^{3(\dim(G)+s)}.
 \end{equation*}
   Let $\Phi$ be the root system of $G$, and let $\alpha\in \Phi$, with the extra condition that $\alpha$ is a short root if $\Phi$ is of type $C_l, l\geq 2$. Set
   \begin{equation*}
      M_r=\begin{cases}
        x_{\ep_1}(L_r(t)) x_{\ep_1+\ep_2}(L_r(t)) & \text{ if } \Phi=C_2, p=2 \\
        x_\alpha(L_r(t)) & \text{ otherwise }
      \end{cases}.
    \end{equation*}
    By Theorem $A$ in \cite{LMR}, there exists a generating set $X$ of $G(\Fp[t])$ so that
   \begin{equation*}
   ||M_r||_X\leq C_1\deg(L_r(t))
   \end{equation*}
    for some constant $C_1$. The degree of $\lcm\{h(t)\in \Fp[t] : \deg (h(t))\leq r\}$ is at most $2p^r$, so $\deg(L_r(t))\leq 6(\dim(G)+s)p^r$. Hence $||M_r||_X\leq C_2p^r$ for some constant $C_2.$ Since $G(\Fp[t])$ is quasi-isometric to $\Delta$, we conclude that $M_r$ has word length $n\leq Cp^r$ for some constant $C$ with respect to some generating set of $\Delta$.

 The remaining argument is the same as in the proof of Theorem $\ref{thm:Char0LowerBoundRF}$. Substituting Lemma $\ref{lem:CharpBaseCase}$ for Lemma $\ref{lem:Char0BaseCase}$, one shows that if $M_r\not\in H\leq \Delta$, then $[\Delta: H]\geq \dfrac{1}{2} p^{ra(G)}$, and if $H$ is normal then $[\Delta: H]\geq \dfrac{1}{2d} p^{r\dim(G)}.$
 \end{proof}

\appendix
 \section{Proof of Lemma $\ref{lem:InvariantSubspace}$}

 %$

 Here we give the postponed proof of Lemma $\ref{lem:InvariantSubspace}.$ We will use the descriptions of the irreducible root systems given in section 12 of \cite{H} except for $G_2$; in this case we fix a base $\{\alpha_S, \alpha_L\}$, where $\alpha_S$ and $\alpha_L$ are short and long roots, respectively, so that the short roots are $\{\pm \alpha_S, \pm_(\alpha_S+\alpha_L), \pm (2\alpha_S+\alpha_L)\}$ and the long roots are $\{\pm \alpha_L, \pm(3\alpha_S+\alpha_L), \pm(3\alpha_S+2\alpha_L)\}$.

 \begin{lemma}\label{lem:LongRoots}
  Let $\Phi$ be an irreducible root system.
  \begin{enumerate}[(1)]
	\item If $\alpha,\gamma\in \Phi$ and $\alpha$ is a long root, then $\gamma-2\alpha\in \Phi$ if and only if $\gamma=\alpha$.
	\item If $\Phi$ is not of type $C_l, l\geq 2$, then there exist long roots $\alpha,\beta\in\Phi$ such that $\alpha+\beta\in \Phi$ and $\alpha-\beta\not\in \Phi$.
  \end{enumerate}
\end{lemma}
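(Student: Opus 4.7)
My plan for part~(1) is to combine two standard facts about irreducible root systems. The first is the Cauchy--Schwarz style bound on Cartan integers: for any $\alpha,\gamma\in\Phi$,
\[|\langle\gamma,\alpha^\vee\rangle|=\frac{2|(\gamma,\alpha)|}{(\alpha,\alpha)}\le \frac{2|\gamma|}{|\alpha|},\]
so when $\alpha$ is long (hence of maximal length in $\Phi$) we have $|\langle\gamma,\alpha^\vee\rangle|\le 2$, with equality forcing $\gamma=\pm\alpha$. The second is the root-string property: if $\gamma$ is not a scalar multiple of $\alpha$, then $\{k\in\Z:\gamma+k\alpha\in\Phi\}$ is a consecutive interval $[-p,q]$ with $p-q=\langle\gamma,\alpha^\vee\rangle$. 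Armed with these, suppose $\alpha$ is long and $\gamma,\gamma-2\alpha\in\Phi$. The cases $\gamma=\pm\alpha$ are immediate: $\gamma=\alpha$ is the desired conclusion, while $\gamma=-\alpha$ would give $\gamma-2\alpha=-3\alpha\notin\Phi$. So assume $\gamma\ne\pm\alpha$; then the $\alpha$-string through $\gamma$ is a genuine consecutive interval, and $\gamma-2\alpha\in\Phi$ forces $p\ge 2$.

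Let $\delta=\gamma+q\alpha$ be the top root of this string. Its $\alpha$-string satisfies $p'=p+q$ and $q'=0$, so $\langle\delta,\alpha^\vee\rangle=p+q\ge 2$. A short check rules out $\delta=\pm\alpha$: if $\delta=\alpha$ then $\gamma=(1-q)\alpha$ would be a nonzero scalar multiple of $\alpha$ and hence equal $\pm\alpha$, contradicting $\gamma\ne\pm\alpha$; the case $\delta=-\alpha$ is similar. Then the bound above applied to $\delta$ gives $|\langle\delta,\alpha^\vee\rangle|\le 1$, contradicting $p+q\ge 2$. This completes~(1).

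For part~(2) the plan is a uniform case analysis exploiting the structure of the long-root subsystem. In every irreducible $\Phi$ not of type $C_l$ the long roots contain an $A_2$-type subsystem, and any two simple roots $\alpha,\beta$ of such a subsystem satisfy $\alpha+\beta\in\Phi$ and $\alpha-\beta\notin\Phi$. Concretely: in the simply-laced types $A_l$ ($l\ge 2$), $D_l$ ($l\ge 4$), $E_6,E_7,E_8$ every root is long and one can take $\alpha=\ep_1-\ep_2$, $\beta=\ep_2-\ep_3$ in a standard $A_2$; for $B_l$ ($l\ge 3$) the long roots $\pm\ep_i\pm\ep_j$ form a $D_l$-subsystem and for $F_4$ they form a $D_4$-subsystem, so the same choice works in both; for $G_2$ take $\alpha=\alpha_L$ and $\beta=3\alpha_S+\alpha_L$, both long, satisfying $\alpha+\beta=3\alpha_S+2\alpha_L\in\Phi$ and $\alpha-\beta=-3\alpha_S\notin\Phi$. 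The exclusion of $C_l$ is sharp: the long roots of $C_l$ are the mutually orthogonal $\pm 2\ep_i$, so no two distinct long roots can sum to a root.

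The principal technical care is in part~(1): the ``consecutive interval'' form of the string property breaks down precisely on the line $\Q\alpha$, so the cases $\gamma=\pm\alpha$ must be handled before invoking it, and one must verify that the top element $\delta$ is not itself proportional to $\alpha$. Part~(2) is essentially bookkeeping once the explicit long-root descriptions (from, e.g.,~\cite{H}) are in hand.
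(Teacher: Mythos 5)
Your proposal is correct on both parts, but your route through part~(1) differs from the paper's. The paper argues directly with lengths: from $(\alpha,\gamma)\le|\alpha|\,|\gamma|$ (equality iff $\gamma=\alpha$) one gets $|\gamma-2\alpha|^2\ge(|\gamma|-2|\alpha|)^2\ge|\alpha|^2$, while $\alpha$ long forces $|\gamma-2\alpha|^2\le|\alpha|^2$ for any root $\gamma-2\alpha$; equality throughout gives $\gamma=\alpha$. Your version instead combines the Cartan-integer bound $|\langle\cdot,\alpha^\vee\rangle|\le 2$ with the $\alpha$-string through $\gamma$ and a shift to the top of the string. Both are valid, but yours carries more overhead: you need the string-interval property, the auxiliary root $\delta$, and the side checks that $\gamma\ne\pm\alpha$ and $\delta\ne\pm\alpha$. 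Note that once you have $|\langle\gamma,\alpha^\vee\rangle|\le 2$ with equality only at $\gamma=\pm\alpha$, you can bypass the root string entirely: $\langle\gamma-2\alpha,\alpha^\vee\rangle=\langle\gamma,\alpha^\vee\rangle-4\ge -2$ forces $\langle\gamma,\alpha^\vee\rangle=2$, hence $\gamma=\alpha$. That collapses your argument to two lines and is essentially the dual form of the paper's length computation. For part~(2) your choices of $\alpha,\beta$ coincide with the paper's for $B_l$, $F_4$, and $G_2$; for the simply-laced types the paper observes that \emph{any} pair with $\alpha+\beta\in\Phi$ works because root strings have length at most $2$, which is marginally cleaner than exhibiting an $A_2$ subsystem, but the two arguments are interchangeable. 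Your remark that the long roots of $B_l$ and $F_4$ form subsystems of type $D_l$ and $D_4$ is a pleasant unifying observation the paper does not make explicit.
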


\begin{proof}
We first prove $(1)$. Let $\alpha,\gamma\in\Phi$ with $\alpha$ long. If $\gamma=\alpha$ then $\gamma-2\alpha=-\alpha\in \Phi$. So assume that $\gamma-2\alpha\in \Phi$. If $\theta$ is the angle between $\alpha$ and $\gamma$, then $(\alpha, \gamma)=|\alpha||\gamma|\cos \theta$, so $(\alpha,\gamma)\leq |\alpha||\gamma|$, with equality if and only if $\gamma=\alpha$. Then
\begin{align*}
  |\gamma-2\alpha|^2 &= (\gamma-2\alpha, \gamma-2\alpha) \\
  &= |\gamma|^2-4(\alpha, \gamma)+4|\alpha|^2 \\
  &\geq |\gamma|^2-4|\alpha||\gamma|+4|\alpha|^2 \\
  &=(|\gamma|-2|\alpha|)^2,
\end{align*}
with equality if and only if $\gamma=\alpha$. But $(|\gamma|-2|\alpha|)^2\geq |\alpha|^2$ and $\alpha$ is a long root, so $|\gamma-2\alpha|^2 > (|\gamma|-2|\alpha|)^2$ is not possible since $\gamma-2\alpha\in \Phi$. Hence $\gamma=\alpha$.

  We now prove $(2)$ case by case. If $\Phi$ is a simply laced root system, then there are no root strings of length greater than $2$, so any choice of $\alpha,\beta\in \Phi$ with $\alpha+\beta\in \Phi$ will suffice.

  If $\Phi$ is of type $B_l$, $l\geq 3,$ or $F_4$, set $\alpha=\ep_1-\ep_2$ and $\beta=\ep_2-\ep_3$. Then $\alpha+\beta=\ep_1-\ep_3\in \Phi$ and $\alpha-\beta\not\in\Phi$.

  If $\Phi$ is of type $G_2$, then put $\alpha=\alpha_L$ and $\beta=3\alpha_S+\alpha_L$. Then $\alpha+\beta=3\alpha_S+2\alpha_L\in \Phi$ and $\alpha-\beta=-3\alpha_S\not\in \Phi$.
\end{proof}

Let $\Phi$ be an irreducible root system, let $G$ be a Chevalley group of type $\Phi$ and let $\mathfrak{g}$ be the Lie algebra of type $\Phi$ with Chevalley basis $B=\{e_\alpha: \alpha\in \Phi\}\cup\{h_1,\cdots, h_l\}$. The following equations and more information on Chevalley groups can be found in \cite{Carter}. 

Let $\mathfrak{g}(\Z)$ be the $\Z$-span of $B$; this is a Lie algebra over $\Z$. If $\alpha\in \Phi$ and $v\in \mathfrak{g}(\Z)$, then by the properties of Chevalley bases, $\frac{1}{2} [e_\alpha, [e_\alpha, v]]$ and $\frac{1}{6}[e_\alpha,[e_\alpha,[e_\alpha, v]]]$ are both in $\mathfrak{g}(\Z)$. If $p$ is a prime, then using the natural map $\mathfrak{g}(\Z)\to \mathfrak{g}(\Fp)$, we can interpret these expressions as elements in $\mathfrak{g}(\Fp)$, regardless of the choice of $p$. In particular, these expressions make sense even if $p=2$ or $p=3$.

Using this interpretation, if $\F$ is a field, $\alpha\in \Phi$, and $t\in \F$, then 
  \begin{equation}\label{eq:action0}
    x_\alpha(t) \cdot v=v+t[e_\alpha, v]+t^2\frac{1}{2}[e_\alpha,[e_\alpha, v]]+t^3 \frac{1}{6}[e_\alpha,[e_\alpha,[e_\alpha, v]]],
  \end{equation}
  where the final term is always 0 if $\Phi$ is not of type $G_2$.
  
We will need the following specific instances of $\eqref{eq:action0}$. 
\begin{align*}
  x_\alpha(t) \cdot e_\alpha &= e_\alpha, \\
  x_\alpha(t) \cdot e_{-\alpha} &=e_{-\alpha}+th_{\alpha}-t^2e_\alpha, \\
  x_\alpha(t) \cdot h_\alpha &= h_\alpha-2te_\alpha.
\end{align*}
If $\alpha,\beta\in \Phi$ are linearly independent, i.e. $\beta\neq \pm \alpha$, then
\begin{align*}
  x_\alpha(t)\cdot h_\beta &= h_\beta- \langle \alpha, \beta\rangle e_\alpha, \\
  x_\alpha(t) \cdot e_\beta &= e_\beta+\sum_{i=1}^q M_{\alpha, \beta, i} t^i e_{i\alpha+\beta},
\end{align*}
where $M_{\alpha, \beta, i}\in \{\pm 1, \pm 2, \pm 3\}$.

We are now ready to prove Lemma $\ref{lem:InvariantSubspace}$.

\newtheorem*{lem:InvariantSubspace}{Lemma \ref{lem:InvariantSubspace}}
\begin{lem:InvariantSubspace}
  Let $\F$ be a finite field of characteristic $p$ such that $|\F|\geq 4$, and let $G$ be a simply connected Chevalley group with root system $\Phi$. Let $V$ be a proper $\Fp$-subspace of $\mathfrak{g}(\F)$. If $V$ is $G(\F)$-invariant, then $\F V$, the $\F$-subspace spanned by $V$, is a proper ideal of $\mathfrak{g}(\F)$ which is invariant under the action of $G(\F)$.
\end{lem:InvariantSubspace}

\begin{proof}
 Let $\Phi$ have rank $l$ and fix a Chevalley basis $B=\{e_\alpha: \alpha\in \Phi\}\cup\{h_1,\cdots, h_l\}$ of $\mathfrak{g}(\F)$. The $\F$-subspace $\F V$ is an ideal of $\mathfrak{g}(\F)$ if $[\mathfrak{g}(\F), \F V]\subseteq \F V$, but it is sufficient to check that $[e_\alpha, \F V]\subseteq \F V$ for all $\alpha\in \Phi$, as we now show.

  Let $\Pi=\{\alpha_1,\cdots, \alpha_l\}$ be the base for $\Phi$ associated to the Chevalley basis $B$, so that $h_i=[e_{\alpha_i}, e_{-\alpha_i}]$ for $1\leq i\leq l$. Then using the Jacobi identity, for $v\in \F V$ and $1\leq i\leq l$ we have
  \begin{equation*}
	[h_i, v]=[[e_{\alpha_i}, e_{-\alpha_i}], v]=[e_{\alpha_i}, [e_{-\alpha_i}, v]]-[e_{-\alpha_i}, [e_{\alpha_i}, v]].
  \end{equation*}
  Thus if $[e_\alpha, \F V]\subseteq \F V$ for all $\alpha\in \Phi$, then $[h_i, \F V]\subseteq \F V$ as well, so $[\mathfrak{g}(\F), \F V]\subseteq \F V$ and $\F V$ is an ideal. We now proceed to the proof of the lemma.

  First assume that $V$ is actually an $\F$-subspace of $\mathfrak{g}(\F)$, so $V=\F V$. If $\alpha\in \Phi$, $\lambda\in \F$, and $v\in V$, then we can write $\eqref{eq:action0}$ as
  \begin{equation}\label{eq:action}
    x_\alpha(\lambda) \cdot v-v=\lambda[e_\alpha, v]+\lambda^2\frac{1}{2}[e_\alpha,[e_\alpha, v]]+\lambda^3 \frac{1}{6}[e_\alpha,[e_\alpha,[e_\alpha, v]]]\in V.
  \end{equation}
  Since $|\F|\geq 4$, there exist three distinct nonzero elements $s,t,u\in \F$. Fix $v\in V$ and $\alpha\in \Phi$ and write the right hand side of $\eqref{eq:action}$ as $\lambda z_1 +\lambda^2 z_2+\lambda^3 z_3\in V$. Since $V$ is an $\F$-subspace, this implies $z_1+\lambda z_2+\lambda^2 z_3\in V$. Using $s,t,u$ in place of $\lambda$, we have
  \begin{align*}
    v_1 &= z_1+sz_2+s^2 z_3 \in V, \\
    v_2 &= z_1+tz_2+t^2 z_3 \in V, \\
    v_3 &= z_1+uz_2+u^2 z_3 \in V.
  \end{align*}
  The matrix of this linear system is Vandermonde and hence invertible. Then $z_1=[e_\alpha, v]$ is a linear combination of $v_1, v_2,$ and $v_3$, and thus $[e_\alpha, v]\in V$.

  Therefore $V$ is an ideal of $\mathfrak{g}(\F)$. Since $V$ is assumed to be proper, $V=\F V$ is a proper ideal of $\mathfrak{g}(\F)$, so the lemma is proved in this case.

  We now assume for the remainder of the proof that $V$ is an $\Fp$-subspace, but not necessarily an $\F$-subspace, of $\mathfrak{g}(\F)$. Then $\F V$ is a $G(\F)$-invariant $\F$-subspace of $\mathfrak{g}(\F)$ and thus an ideal by the above argument. It remains to show that $\F V\neq \mathfrak{g}(\F)$. We split the proof into three cases which cover different restrictions on $p$ and $\Phi$.
  
  \medskip
  
\noindent \textbf{Case 1:} $p\neq 2$, $\Phi$ not of type $G_2$.  If $s\in \F,$ $\alpha\in \Phi$, and $v\in V$, then by $\eqref{eq:action0}$ we have
  \begin{equation*}
   x_\alpha(s)\cdot v-x_\alpha(-s)\cdot v=2s[e_\alpha, v]\in V,
  \end{equation*}
  so $s [e_\alpha, v]\in V$. Thus $s[h_i, v]\in V$ for all $s\in \F$, $1\leq i\leq l$, $v\in V$ by the argument at the beginning of this proof.

  Therefore the $\F$-span of $\{[x,v]: x\in \mathfrak{g}(\F), v\in V\}$ is contained in $V$ and hence is not equal to $\mathfrak{g}(\F)$. But this set is just $[\mathfrak{g}(\F), \F V]$. Since char $\F\neq 2$, $[\mathfrak{g}(\F), \mathfrak{g}(\F)]=\mathfrak{g}(\F)$, so we must have $\F V\neq \mathfrak{g}(\F)$.

\medskip

\noindent \textbf{Case 2:} $p$ any prime, $\Phi$ not of type $C_l$, $l\geq 2$. Assume $\F V=\mathfrak{g}(\F)$; we will show that this implies $V=\mathfrak{g}(\F)$, a contradiction.

 Let $E_L$ and $E_S$ be the $\F$-subspaces of $\mathfrak{g}(\F)$ spanned by $\{e_\alpha: \alpha \text{ long}\}$ and $\{e_\alpha: \alpha \text{ short}\}$, respectively, so $\mathfrak{g}(\F)=H\oplus E_S \oplus E_L$, with the convention that $E_S=0$ if $\Phi$ is simply laced. Since $V$ is $G(\F)$-invariant and $x_\alpha(t)\cdot e_{-\alpha}=e_{-\alpha}+th_\alpha-t^2 e_\alpha$ for $\alpha\in \Phi$, $t\in \F$, to show that $V=\mathfrak{g}(\F)$ it suffices to show $E_S\oplus E_L\subseteq V$.

Fix $v\in V$, which we write as
  \begin{equation}\label{eq:v}
    v=h+\sum_{\beta\in \Phi} s_\beta e_\beta\in V,
  \end{equation}
   where $h\in H$ and $s_\beta\in \F$. If $\gamma\in \Phi$ is a long root and $\delta\in \Phi$, then $2\gamma+\delta\in \Phi$ if and only if $\delta=-\gamma$ by Lemma $\ref{lem:LongRoots} (1)$, so for any $t\in \F$,
  \begin{equation}\label{eq:LongAction}
    x_\gamma(t)\cdot v-v=t [e_\gamma, v]- t^2s_{-\gamma} e_\gamma\in V.
  \end{equation}

  By Lemma $\ref{lem:LongRoots}(2)$, we can find long roots $\alpha$ and $\beta$ such that $\alpha+\beta\in \Phi$ and $\alpha-\beta\not\in \Phi$. If $\gamma\in \Phi$, then by Lemma $\ref{lem:LongRoots}(1)$, $\gamma-2\alpha\in \Phi$ if and only if $\gamma=\alpha$.
  
  By assumption, $\F V=\mathfrak{g}(\F)$, so there exists $v\in V$ with $s_\alpha\neq 0$. Hence to show that $\F e_{-\alpha}\subseteq V$, it is sufficient to prove the following claim.

  \medskip
  
  \noindent \textbf{Claim:} Let $\gamma_1=\beta$, $\gamma_2=-\alpha$, and $\gamma_3=-(\alpha+\beta)$. 
  For any $t\in \F$ and any $v\in V$ written as in $\eqref{eq:v}$,
  \begin{equation*}
    t[ e_{\gamma_3},[e_{\gamma_2}, [e_{\gamma_1},v]-s_{-\gamma_1}e_{\gamma_1}]]=\pm t s_\alpha e_{-\alpha}\in V.
  \end{equation*}
\begin{proof}
Fix $t\in \F$, $v\in V$. Set $v_1=[e_{\gamma_1},v]-s_{-\gamma_1}e_{\gamma_1},$ which is in $V$ by $\eqref{eq:LongAction}$. Since
\begin{equation*}
 -(\gamma_1+\gamma_2)=\alpha-\beta\not\in \Phi,
\end{equation*}
 the coefficient of $e_{-\gamma_2}$ in $v_1$ is 0, and thus $v_2=[e_{\gamma_2}, v_1]=[e_{\gamma_2},[e_{\gamma_1}, v]]\in V$ by $\eqref{eq:LongAction}$. In addition, $v_2\in E_S\oplus E_L$. Similarly, the coefficient of $e_{-\gamma_3}$ in $v_2$ is 0 since $-(\gamma_1+\gamma_2+\gamma_3)=2\alpha\not\in \Phi$, so $v_3=t[e_{\gamma_3}, v_2]\in V$ by $\eqref{eq:LongAction}$ and $v_3\in E_S\oplus E_L$.

For any $\gamma\in \Phi$,
\begin{equation*}
\gamma+\gamma_1+\gamma_2+\gamma_3=\gamma-2\alpha,
\end{equation*}
and $\gamma-2\alpha\in \Phi$ if and only if $\gamma=\alpha$. We also have $\gamma_2+\gamma_3=\beta-2\alpha\not\in\Phi$ and $\gamma_1+\gamma_2+\gamma_3=-2\alpha\not\in \Phi$, so in fact $v_3=\pm t s_\alpha e_{-\alpha}$ as claimed.
\end{proof}

The Weyl group $W \leq G(\F)$ of $\Phi$ acts transitively on $\{e_\gamma : \gamma \text{ long}\}$, so since $\F e_{-\alpha}\subseteq V$ and $\alpha$ is a long root, we conclude that $E_L\subseteq V$. If $\Phi$ is simply laced, this immediately implies $ V=\mathfrak{g}(\F)$, the desired contradiction.

If $\Phi$ is of type $B_l, l\geq 3$ or $F_4$, then for $t\in \F$,
\begin{equation*}
  x_{\ep_1}(t)\cdot e_{\ep_2-\ep_1}-e_{\ep_2-\ep_1}=\pm t e_{\ep_2}\pm t^2 e_{\ep_1+\ep_2}\in V.
\end{equation*}
Since $\ep_1+\ep_2$ is a long root and $E_L\subseteq V$, we have $\F e_{\ep_2} \subseteq V$. By the transitive action of $W$ on $\{e_\gamma: \gamma \text{ short}\}$, $E_S\subseteq V$ and hence $V=\mathfrak{g}(\F)$.

If $\Phi$ is of type $G_2$, then for $t\in \F$,
\begin{equation*}
  x_{-\alpha_S-\alpha_L}(t)\cdot e_{\alpha_L}-e_{\alpha_L}=\pm t e_{-\alpha_S},
\end{equation*}
so $\F e_{-\alpha_S}\subseteq V$. Hence $E_S\subseteq V$ and $V=\mathfrak{g}(\F)$.

\medskip

\noindent \textbf{Case 3:} $p=2$, $\Phi$ of type $C_l$, $l\geq 2$. We again assume $\F V=\mathfrak{g}(\F)$ for the sake of contradiction. Let $\gamma_1=2\ep_2$ and $\gamma_2=-2\ep_1$. These are long roots with $\gamma_1+\gamma_2\not\in \Phi$ and $\gamma+\gamma_1+\gamma_2\in\Phi$ if and only if $\gamma=\ep_1-\ep_2$. Then by the same reasoning as in the argument for Case 2, if $t\in \F$ and $v\in V$ is written as in $\eqref{eq:v},$ we have
  \begin{equation*}
  t[e_{\gamma_2}, [e_{\gamma_1}, v]-s_{-\gamma_1}e_{\gamma_1}]=\pm t s_{\ep_1-\ep_2} e_{\ep_2-\ep_1}\in V.
  \end{equation*}
  Since $\F V=\mathfrak{g}(\F)$, $s_{\ep_1-\ep_2}\neq 0$ for some $v\in V$, so $\F e_{\ep_2-\ep_1}\subseteq V$ and hence $E_S\subseteq V$.

   To show $E_L\subseteq V$, let $v\in V$ with $s_{2\ep_2}\neq 0$. Since $E_S\subseteq V$, we can assume $v$ is of the form $v=h+\sum_{\alpha \text{ long}} s_\alpha e_\alpha$. The only long roots $\alpha$ satisfying $\ep_1-\ep_2+\alpha\in \Phi$ are $\alpha=-2\ep_1$ and $\alpha=2\ep_2$, and
   \begin{align*}
	 x_{\ep_1-\ep_2}(1)\cdot e_{-2\ep_1}-e_{-2\ep_1} &= \pm e_{-\ep_1-\ep_2} \pm e_{2\ep_1}, \\
	 x_{\ep_1-\ep_2}(1)\cdot e_{2\ep_2}-e_{2\ep_2} &= \pm e_{\ep_1+\ep_2} \pm e_{-2\ep_2}.
   \end{align*}
   Therefore
   \begin{equation*}
	 x_{\ep_1-\ep_2}(1)\cdot v -v = s e_{\ep_1-\ep_2} \pm s_{-2\ep_1} e_{-\ep_1-\ep_2} \pm s_{-2\ep_1} e_{2\ep_1} \pm s_{2\ep_2} e_{\ep_1+\ep_2} \pm s_{2\ep_2} e_{-2\ep_2} \in V,
   \end{equation*}
   where $[e_{\ep_1-\ep_2}, h]=s e_{\ep_1-\ep_2}$ for some $s\in \F$.
   Again using the fact that $E_S\subseteq V$, we conclude that
  \begin{equation*}
  v_1=\pm s_{2\ep_2} e_{2\ep_1}\pm s_{-2\ep_1} e_{-2\ep_2}\in V.
  \end{equation*}
  Then if $t\in \F$,
  \begin{equation*}
	x_{e_{-\ep_1-\ep_2}}(t)\cdot v_1 -v_1 = \pm ts_{2\ep_2} e_{\ep_1-\ep_2}\pm t^2 s_{2\ep_2} e_{-2\ep_2}\in V,
  \end{equation*}
  so $t^2 s_{2\ep_2}e_{-2\ep_2}\in V.$ But $\F$ is a finite field with characteristic 2, so $\F^2=\F$, and thus we conclude that $E_L\subseteq V$ and hence $V=\mathfrak{g}(\F)$, a contradiction.
\end{proof}

\end{document}